\let\originalleft\left
\let\originalright\right
\renewcommand{\left}{\mathopen{}\mathclose\bgroup\originalleft}
\renewcommand{\right}{\aftergroup\egroup\originalright}
\numberwithin{equation}{section}
\theoremstyle{plain}
\newtheorem{lemma}{Lemma}[section]
\theoremstyle{plain}
\theoremstyle{plain}
\newtheorem{theorem}[lemma]{Theorem}
\theoremstyle{plain}
\newtheorem{corollary}[lemma]{Corollary}
\theoremstyle{definition}
\newtheorem{definition}[lemma]{Definition}
\theoremstyle{remark}
\newtheorem{remark}[lemma]{Remark}
\newcommand{\bR}{{\mathbb{R}}}
\newcommand{\bN}{{\mathbb{N}}}
\newcommand{\cP}{{\mathcal{P}}}
\newcommand{\cQ}{{\mathcal{Q}}}
\newcommand{\cR}{{\mathcal{R}}}
\newcommand{\SMALL}{\textstyle}
\newcommand{\logg}{\operatorname{logg}}
\newcommand{\iphi}{{\Phi_\alpha}}  
\newcommand{\const}{{\mathbf{C}}}
\begin{document}

\title{Linear response for intermittent maps with summable and nonsummable
decay of correlations}

\author{Alexey Korepanov
\thanks{Mathematics Institute, University of Warwick, Coventry, CV4 7AL, UK}} 
\date{August 26, 2015 \\ updated March 15, 2016}
\maketitle

\begin{abstract}
  We consider a family of Pomeau-Manneville type interval maps $T_\alpha$,
  parametrized by $\alpha \in (0,1)$, with the unique absolutely continuous invariant 
  probability measures $\nu_\alpha$, and rate of correlations decay $n^{1-1/\alpha}$. 
  We show that despite the absence of a spectral gap for all $\alpha \in (0,1)$ and
  despite nonsummable correlations for $\alpha \geq 1/2$, the map 
  $\alpha \mapsto \int \varphi \, d\nu_\alpha$ is continuously differentiable 
  for $\varphi \in L^{q}[0,1]$ for $q$ sufficiently large.
\end{abstract}

\section{Introduction}

Let $T_\alpha \colon X \to X$ be a family of transformations on a Riemannian manifold $X$
parametrized by $\alpha$ and admitting unique SRB measures $\nu_\alpha$.
Having an observable $\varphi\colon X \to \bR$, it may be important to know how
$\int \varphi \, d\nu_\alpha$ changes with $\alpha$. If the map
$\alpha \mapsto \int \varphi \, d\nu_\alpha$ is differentiable, then
\emph{linear response} holds.

An interesting question is, which families of maps and observables have linear response.
Ruelle proved linear response in the Axiom A case \cite{R97,R98, R09,R09.1}.
It was shown in \cite{D04,B07,M07,BS08} that spectral gap and structural stability are not
necessary or sufficient conditions.

We consider a family of Pomeau-Manneville type maps with slow (polynomial) decay of correlations:
$T_\alpha \colon [0,1] \to [0,1]$, given by
\begin{equation}
  \label{eq:LSVT}
  T_\alpha (x) = \begin{cases} 
    x(1+2^\alpha x^\alpha) & \text{ if } \,\, x \in [0,1/2] \\ 
    2x-1 & \text{ if } \,\, x \in (1/2,1] 
  \end{cases},
\end{equation}
parametrized by $\alpha \in [0,1)$. By \cite{LSV99}, each $T_\alpha$ admits a unique absolutely
continuous invariant probability measure $\nu_\alpha$, and the sharp rate of decay of 
correlations for H\"older observables is $n^{1-1/\alpha}$ \cite{Y99,S02,G04,H04}.


We prove linear response on the interval $\alpha \in (0,1)$, including the case
when $\alpha \geq 1/2$, and correlations are not summable.
This is the first time that linear response has been proved in the case 
of nonsummable decay of correlations.
We develop a machinery which, when applied to the family $T_\alpha$, yields:
\begin{theorem}
  \label{th:LSVmain}
  For any $\varphi \in C^1[0,1]$, the map $\alpha \mapsto \int \varphi \, d\nu_\alpha$
  is continuously differentiable on $(0,1)$.
\end{theorem}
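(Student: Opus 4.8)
The strategy is to reduce the problem to an induced (first-return) map on $Y = (1/2,1]$, where the dynamics is uniformly expanding and admits a spectral gap, and then to track the $\alpha$-dependence through the tower construction. First I would recall the standard fact that $T_\alpha$ has a first-return map $F_\alpha = T_\alpha^{\tau_\alpha}\colon Y \to Y$ which is a full-branch uniformly expanding Markov map, with return time $\tau_\alpha$ and return-time partition $\{Y_n^\alpha\}$; the invariant density $h_\alpha$ of $\nu_\alpha$ can be reconstructed from the $F_\alpha$-invariant density $\bar h_\alpha$ by the usual formula $h_\alpha = \sum_{n\ge 1} \sum_{k=0}^{n-1} (T_\alpha^k)_* (\bar h_\alpha \mathbf 1_{\{\tau_\alpha = n\}}) / \int \tau_\alpha \,d\bar\nu_\alpha$, so that $\int \varphi \, d\nu_\alpha = \left(\int \tau_\alpha \, d\bar\nu_\alpha\right)^{-1} \int \sum_{k=0}^{\tau_\alpha - 1} \varphi \circ T_\alpha^k \, d\bar\nu_\alpha$. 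The point of this reduction is that $\bar\nu_\alpha$ comes from a transfer operator with a spectral gap, so its dependence on $\alpha$ is amenable to perturbation theory — but the perturbation is \emph{not} small in the uniform operator norm, so one must work in a carefully chosen scale of spaces.

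Next I would set up the perturbation analysis. The branches of $F_\alpha$ are reparametrizations of the branches of $T_\alpha$ restricted to the orbit that falls near the fixed point $0$, so the partition points and the branch derivatives depend on $\alpha$ in a way one can differentiate explicitly, using that $T_\alpha(x) = x(1 + 2^\alpha x^\alpha)$ and that the preimages $x_n^\alpha$ of $1/2$ under iteration satisfy $x_n^\alpha \sim (\alpha n)^{-1/\alpha}$. The key estimates are: (i) $\tau_\alpha$ has tails $\bar\nu_\alpha(\tau_\alpha > n) \asymp n^{-1/\alpha}$, so $\int \tau_\alpha \, d\bar\nu_\alpha < \infty$ for $\alpha < 1$; (ii) $\partial_\alpha$ of the relevant transfer operator, acting between a stronger and a weaker space in an appropriate anisotropic Banach-space chain (e.g. spaces of functions with controlled bounded variation, or $C^1$ versus $C^0$ with weights adapted to the cusp), is a bounded operator; and (iii) the pressure/leading-eigenvalue machinery gives $\bar h_\alpha$ and $\int \tau_\alpha \, d\bar\nu_\alpha$ as $C^1$ functions of $\alpha$. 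I would then differentiate the formula for $\int \varphi\, d\nu_\alpha$ term by term, controlling the infinite sum over $k$ and $n$ by the tail bound (i) together with a uniform (in $\alpha$ on compact subintervals) version of the decay-of-correlations estimate $n^{1-1/\alpha}$ quoted from \cite{Y99,S02,G04,H04}; the fact that $\varphi \in C^1$ (rather than merely $L^q$) makes the pushforward terms $(T_\alpha^k)_*\!$-type sums and their $\alpha$-derivatives tractable.

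The main obstacle, and the genuinely new part of the argument, is the case $\alpha \ge 1/2$, where $\sum_n n^{1-1/\alpha}$ diverges: one cannot simply sum a Neumann-type series for $(\mathrm{Id} - \text{transfer operator})^{-1}$ applied to the relevant coboundary, because the naive bounds are not summable. Here I expect the resolution to be a more delicate decomposition: split the observable into its "regular" part (for which the spectral-gap estimates on the induced system suffice) and a part supported near the neutral fixed point, and handle the latter by exploiting cancellations in $\sum_{k=0}^{\tau_\alpha - 1}\varphi\circ T_\alpha^k$ — essentially an integration-by-parts / summation-by-parts in $k$ that converts the nonsummable pointwise bound into a summable one after integrating against $d\bar\nu_\alpha$. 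Making this rigorous, uniformly in $\alpha$ near $1/2$ and up to $1$, with enough regularity to conclude \emph{continuous} (not just pointwise) differentiability of $\alpha \mapsto \int\varphi\,d\nu_\alpha$, is where the bulk of the technical work will lie; the $C^1$ hypothesis on $\varphi$ and the sharpness of the tail exponent $1/\alpha$ are exactly what make the borderline sums converge after this rearrangement.
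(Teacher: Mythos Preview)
Your overall strategy --- induce to the first-return map $F_\alpha$ on $[1/2,1]$, use Kac's formula
\[
  \int \varphi \, d\nu_\alpha = \frac{\int \Phi_\alpha \, d\mu_\alpha}{\int \tau_\alpha \, d\mu_\alpha},
  \qquad \Phi_\alpha = \sum_{k=0}^{\tau_\alpha-1}\varphi\circ T_\alpha^k,
\]
and then differentiate both integrals on the right using the spectral gap of the induced system --- is exactly the paper's approach. But you have misidentified where the real difficulty lies, and the ``genuinely new part'' you propose is not needed and would not be the right fix.

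The point you are missing is that once you have passed to $F_\alpha$ and $\mu_\alpha$, the polynomial decay $n^{1-1/\alpha}$ of $T_\alpha$ never reappears. The induced map is uniformly expanding with \emph{exponential} decay in $C^1$/$C^2$, uniformly for $\alpha$ in compact subsets of $(0,1)$; the Neumann series $\sum_k P_\alpha^k$ for the \emph{induced} transfer operator converges geometrically regardless of whether $\alpha<1/2$ or $\alpha\ge 1/2$. There is no ``coboundary with nonsummable bounds'' and no need for cancellations or summation by parts in $\sum_{k=0}^{\tau_\alpha-1}\varphi\circ T_\alpha^k$. What must be checked is purely a summability condition over the \emph{branches} of $F_\alpha$: on the $r$-th branch one has $|\Phi_\alpha|\le C(r+1)\|\varphi\|_{C^1}$ and $\|G_{\alpha,r}\|_\infty \le C\,r^{-1-1/\alpha}$, so the relevant series behaves like $\sum_r (r+1)\,r^{-1-1/\alpha}=\sum_r r^{-1/\alpha}$, which converges for \emph{every} $\alpha\in(0,1)$. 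Differentiation in $\alpha$ only costs logarithmic factors (the paper gets $\gamma_r\sim(\log r)^3$), which do not spoil convergence. Thus the cases $\alpha<1/2$ and $\alpha\ge 1/2$ are handled identically.

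The actual technical work, which your plan does not address, is obtaining uniform-in-$\alpha$ bounds on the branch inverses $F_{\alpha,r}^{-1}$, on $G_{\alpha,r}=(F_{\alpha,r}^{-1})'$, and on their first $\alpha$-derivatives --- i.e.\ on quantities like $\partial_\alpha z_r$, $(\partial_\alpha z_r')/z_r'$, $z_r''/z_r'$ where $z_r=E_\alpha^{-r}(z)$. These come from careful asymptotics $z_r^\alpha\asymp 1/r$ and recursive differentiation of $z_r=z_{r+1}(1+2^\alpha z_{r+1}^\alpha)$, not from any decay-of-correlations estimate for $T_\alpha$. Your reference to the correlation bounds of \cite{Y99,S02,G04,H04} as an input is a red herring; the argument neither uses nor needs them.
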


Using additional structure of the family $T_\alpha$, we prove a stronger result:

\begin{theorem}
  \label{th:whatamIdoing}
  Let $\rho_\alpha$ be the density of $\nu_\alpha$.
  For every $\alpha, x \in (0,1) \times (0,1]$ there exists
  a partial derivative $\partial_\alpha \rho_\alpha (x)$.
  Both $\rho_\alpha(x)$ and $\partial_\alpha \rho_\alpha(x)$ are jointly continuous in $\alpha,x$
  on $(0,1) \times (0,1]$.
  Moreover, for every interval $[\alpha_-, \alpha_+] \subset (0,1)$
  there exists a constant $K$, such that for all $x \in (0,1]$ and $\alpha \in
  [\alpha_-, \alpha_+]$
  \[
    \rho_\alpha (x) \leq K \, x^{-\alpha}
    \qquad \text{and} \qquad
    |\partial_\alpha \rho_\alpha(x)| \leq K \, x^{-\alpha} \left(1-\log x\right).
  \]
  In particular, for any $q > (1-\alpha_+)^{-1}$ and 
  observable $\varphi \in L^{q}[0,1]$, the
  map $\alpha \mapsto \int \varphi \, d\nu_\alpha$ is
  continuously differentiable on $[\alpha_-, \alpha_+]$.
\end{theorem}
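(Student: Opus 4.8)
The plan is to induce on a fixed neighbourhood of the good fixed point $1$, where the dynamics is uniformly expanding, and then to recover $\rho_\alpha$ near the neutral fixed point $0$ from an explicit series whose terms are controlled through the backward iterates of the left branch. Write $Y=(1/2,1]$, let $g_{0,\alpha}\colon(0,1]\to(0,1/2]$ be the inverse of the left branch $x\mapsto x(1+2^\alpha x^\alpha)$, let $g_{1,\alpha}(x)=(x+1)/2$ be the inverse of the right branch, and set $\xi_k:=g_{0,\alpha}^{\,k}$, so that $\xi_0=\mathrm{id}$, $\xi_k(x)\downarrow0$ and $\xi_k'(x)=1/(T_\alpha^k)'(\xi_k(x))=\prod_{j=1}^k T_\alpha'(\xi_j(x))^{-1}$. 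Iterating the transfer operator identity
\[
  \rho_\alpha(x)=\frac{\rho_\alpha(g_{0,\alpha}(x))}{T_\alpha'(g_{0,\alpha}(x))}+\tfrac12\rho_\alpha\bigl(g_{1,\alpha}(x)\bigr),\qquad x\in(0,1],
\]
along the left branch gives the representation
\begin{equation}
  \label{eq:towerrep}
  \rho_\alpha(x)=\tfrac12\sum_{k\ge0}\xi_k'(x)\,\rho_\alpha\bigl(g_{1,\alpha}(\xi_k(x))\bigr),\qquad x\in(0,1],
\end{equation}
the boundary term vanishing; this is just the explicit (Young--tower) form of the invariant density for this family, and the right-hand side only ever evaluates $\rho_\alpha$ on $Y$, since $g_{1,\alpha}(\xi_k(x))\in(1/2,1]$. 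The first-return map $F_\alpha$ to $Y$ is a full-branch uniformly expanding map, so --- by the smooth dependence of its branches on $\alpha$ together with the machinery behind Theorem~\ref{th:LSVmain} --- $\alpha\mapsto\rho_\alpha|_Y=\nu_\alpha(Y)\,\bar\rho_\alpha$ is of class $C^1$ into $C^0(Y)$, and on any $[\alpha_-,\alpha_+]\subset(0,1)$ the functions $\rho_\alpha|_Y$, $\partial_x(\rho_\alpha|_Y)$ and $\partial_\alpha(\rho_\alpha|_Y)$ are jointly continuous and uniformly bounded on $[\alpha_-,\alpha_+]\times Y$, with $\rho_\alpha|_Y$ bounded below away from $0$; also $\alpha\mapsto\nu_\alpha(Y)$ is $C^1$.

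Next I would differentiate \eqref{eq:towerrep} in $\alpha$ termwise. Since $T_\alpha$, $T_\alpha'$ and $g_{0,\alpha}$ are jointly $C^1$ in $(\alpha,x)$ on $(0,1)\times(0,1]$, each $x\mapsto\xi_k(x)$ and each $\xi_k'$ is $C^1$ in $\alpha$, and (using the $x$- and $\alpha$-regularity of $\rho_\alpha|_Y$ above) every summand of \eqref{eq:towerrep} is jointly $C^1$ in $(\alpha,x)$ on $(0,1)\times(0,1]$. It therefore suffices to prove that \eqref{eq:towerrep} and the series obtained by formally differentiating it in $\alpha$ both converge uniformly on $[\alpha_-,\alpha_+]\times[\delta,1]$ for every $\delta>0$, with the two bounds claimed; this yields simultaneously the existence of $\partial_\alpha\rho_\alpha$ on $(0,1)\times(0,1]$, the joint continuity of $\rho_\alpha$ and $\partial_\alpha\rho_\alpha$, and the estimates.

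The crux --- and the \emph{main obstacle} --- is the quantitative analysis of the backward orbit and the resummation. Writing $m=x^{-\alpha}$ and summing the relation $\xi_{k+1}^{-\alpha}-\xi_k^{-\alpha}=\alpha2^\alpha+O(\xi_{k+1}^{\alpha})$ (equivalently comparing with the flow $\dot u=-2^\alpha u^{1+\alpha}$ near $0$), one obtains, uniformly for $\alpha\in[\alpha_-,\alpha_+]$ and $x\in(0,1]$,
\[
  \xi_k(x)\asymp(k+m)^{-1/\alpha},\qquad \xi_k'(x)\asymp\Bigl(\tfrac{m}{k+m}\Bigr)^{1+1/\alpha},
\]
so \eqref{eq:towerrep} and the two-sided bound on $\rho_\alpha|_Y$ give $\rho_\alpha(x)\asymp\sum_{k\ge0}(m/(k+m))^{1+1/\alpha}\asymp m=x^{-\alpha}$. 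For the $\alpha$-derivative one needs in addition $|\partial_\alpha\xi_k(x)|\lesssim\xi_k(x)\log(k+m)$ and, crucially, $|\partial_\alpha\xi_k'(x)|\lesssim\xi_k'(x)\log(k+m)$ with only a \emph{single} logarithm; the naive estimate costs an extra factor $\log(k+m)$, and removing it relies on the cancellation $\log\xi_j+\alpha\,\partial_\alpha\log\xi_j=-\partial_\alpha(\xi_j^{-\alpha})/\xi_j^{-\alpha}$ in the $\alpha$-derivative of the product defining $\xi_k'$, together with $\log(1/x)=\alpha^{-1}\log m\lesssim\log(k+m)$. Differentiating \eqref{eq:towerrep} --- the $\alpha$-derivative falling on $\xi_k'$, on $\xi_k$ inside $\rho_\alpha$, and on $\rho_\alpha|_Y$ itself (the last contributing the uniformly bounded $\partial_\alpha(\rho_\alpha|_Y)$) --- then yields
\[
  |\partial_\alpha\rho_\alpha(x)|\lesssim\sum_{k\ge0}\Bigl(\tfrac{m}{k+m}\Bigr)^{1+1/\alpha}\bigl(1+\log(k+m)\bigr)\asymp m\log(2m)\asymp x^{-\alpha}(1-\log x),
\]
the sum being dominated by the range $k\lesssim m$, on which $\log(k+m)\asymp\log(2m)$. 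Uniform convergence on compact subsets of $(0,1)\times(0,1]$ of both series, each summand jointly continuous, gives the existence, continuity and bounds asserted. The delicate part of all this is the bookkeeping as $x\to0$, where the dynamics is only neutrally expanding.

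Finally, the linear-response statement is soft. Fix $[\alpha_-,\alpha_+]\subset(0,1)$, $q>(1-\alpha_+)^{-1}$, and let $q'=q/(q-1)$; then $q'\alpha_+<1$, so $\int_0^1 x^{-(\alpha_++\varepsilon)q'}\,dx<\infty$ for small $\varepsilon>0$, and since $x^{-\alpha}(1-\log x)\le K_\varepsilon\,x^{-\alpha_+-\varepsilon}$ on $(0,1]$ for $\alpha\le\alpha_+$, the bound on $\partial_\alpha\rho_\alpha$ gives $\sup_{\alpha\in[\alpha_-,\alpha_+]}\|\partial_\alpha\rho_\alpha\|_{L^{q'}[0,1]}<\infty$. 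For $\varphi\in L^q[0,1]$, Hölder's inequality makes $\int_0^1\varphi\,\partial_\alpha\rho_\alpha\,dx$ well defined, and the same bound dominates the difference quotients $h^{-1}(\rho_{\alpha+h}-\rho_\alpha)$ (via the mean value theorem and the joint continuity of $\partial_\alpha\rho_\alpha$); dominated convergence then gives $\frac{d}{d\alpha}\int\varphi\,d\nu_\alpha=\int\varphi\,\partial_\alpha\rho_\alpha\,dx$ on $[\alpha_-,\alpha_+]$, and continuity of this derivative follows from the joint continuity of $\partial_\alpha\rho_\alpha$ and the uniform $L^{q'}$-domination by one more application of dominated convergence.
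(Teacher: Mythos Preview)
Your approach is essentially identical to the paper's: the representation \eqref{eq:towerrep} is exactly Lemma~\ref{lemma:pmdenc} (the paper writes it with the induced density $h_\alpha$, which is proportional to your $\rho_\alpha|_Y$ with a $C^1$ constant), and your estimates on $\xi_k,\xi_k',\partial_\alpha\xi_k,\partial_\alpha\xi_k'$ are the content of Lemmas~\ref{lemma:Iansestimate}--\ref{lemma:dIalpha}; the summation and the $L^q$ corollary are Lemma~\ref{lemma:pmdens} and the final two corollaries of Subsection~\ref{subsection:proofofth:whatamIdoing}.

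Two remarks on the places where you and the paper differ. First, your claimed single-logarithm bound $|\partial_\alpha\xi_k'|\lesssim\xi_k'\log(k+m)$, which you call ``crucial'', is not established by the one-line cancellation you give: the identity $\log\xi_j+\alpha\,\partial_\alpha\log\xi_j=-\partial_\alpha(\xi_j^{-\alpha})/\xi_j^{-\alpha}$ is correct, but the right-hand side is not $O(1)$ uniformly --- for $j\lesssim m$ it is of size $\log m$, coming from $\partial_\alpha(x^{-\alpha})=x^{-\alpha}\log(1/x)$. The paper does not attempt this: Lemma~\ref{lemma:dIalpha} only proves $|\partial_\alpha z_r'/z_r'|\lesssim[\logg(rz_0^\alpha)]^2(\logg r-\log z_0)$, and in Lemma~\ref{lemma:pmdens} the extra logs wash out after the substitution $s=tz^\alpha$, since $(1+s)^{-(\alpha+1)/\alpha}(\log s)^2$ is integrable and what survives is a single factor $(1-\log z)$. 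So the sharper intermediate bound is unnecessary. Second, your derivation of \eqref{eq:towerrep} by iterating the transfer operator leaves a boundary term $\rho_\alpha(\xi_N(x))\xi_N'(x)$ whose vanishing as $N\to\infty$ presupposes growth control on $\rho_\alpha$ near $0$; the paper avoids any circularity by deriving the formula directly from Kac's formula (Lemma~\ref{lemma:pmdenc}).
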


The same problem of linear response for the family $T_\alpha$ has been solved
independently by Baladi and Todd \cite{BT15} using different methods.
They prove that for \(\alpha_+ \in (0,1)\),
$q > (1-\alpha_+)^{-1}$ and $\varphi \in L^{q}[0,1]$, the
map $\alpha \mapsto \int \varphi \, d\nu_\alpha$ is
differentiable on $[0, \alpha_+)$,
plus they give an explicit formula for the derivative
in terms of the transfer operator corresponding to \(T_\alpha\).
We obtain more control of the invariant measure, as in
Theorem~\ref{th:whatamIdoing}, but do not give such a formula.
Instead we provide explicit formulas for \(\rho_\alpha\) and 
\(\partial_\alpha \rho_\alpha\) in terms of the transfer operator 
for the induced map (see 
Subsections~\ref{subsection:scheme} and 
\ref{subsection:proofofth:whatamIdoing}),
but we do not state them here because they are too technical.
Whereas [BT15] were the first to treat the case $\alpha<1/2$, 
we were the first to treat the case $\alpha\ge 1/2$.

In a more recent paper \cite{BS15}, Bahsoun and Saussol consider
a class of dynamical systems which includes \eqref{eq:LSVT}.
They prove in particular that for \(\beta \in (0,1)\) and 
\(\alpha \in (0,\beta)\),
\[
  \lim_{\varepsilon \to 0} \sup_{x \in (0,1]} x^\beta \Bigl|
    \frac{\rho_{\alpha+\varepsilon} - \rho_\alpha}{\varepsilon}
    - \partial_\alpha \rho_\alpha
  \Bigr| = 0.
\]
That is, \(\rho_\alpha\) is differentiable as an element of
a Banach space of continuous functions on \((0,1]\) with a norm
\(\|\varphi\| = \sup_{x \in (0,1]} x^\beta |\varphi(x)|  \).
We remark that for the map \(\eqref{eq:LSVT}\) this follows
from Theorem~\ref{th:whatamIdoing}.

The paper is organized as follows. In Section \ref{section:setup} we introduce
an abstract framework, and in Section \ref{section:LSVintro} we 
apply it to the family $T_\alpha$, and prove Theorem \ref{th:LSVmain}.

Technical parts of proofs are presented separately: in Section \ref{section:dhdalpha}
for the abstract framework, and in Section \ref{section:LSV}
for the properties of the family $T_\alpha$.

Theorem \ref{th:whatamIdoing} 
is proven in Subsection 
\ref{subsection:proofofth:whatamIdoing}. We do not give the proof earlier in the
paper, because it uses rather special technical properties of $T_\alpha$.

\section{Setup and Notations}
\label{section:setup}

Let $I \subset \bR$ be a closed bounded interval, and 
$F_\alpha: I \to I$ be a family of maps,
parametrized by $\alpha \in [\alpha_-, \alpha_+]$.
Assume that each $F_\alpha$ has finitely or countably many full branches,
indexed by $r \in \cR$, the same set $\cR$ for all $\alpha$.

Technically we assume that $I = \bigcup_r [a_r,b_r]$ modulo a zero measure set
(branch boundaries $a_r$ and $b_r$ may depend on $\alpha$), and that for each r
the map $F_{\alpha,r}:[a_r,b_r] \to I$ is a diffeomorphism;
here $F_{\alpha,r}$ equals to $F_\alpha$ on $(a_r, b_r)$, and is extended countinuously
to $[a_r, b_r]$.

\begin{itemize}
  \item We use the letter $\xi$ for spatial variable,
    and notation $(\cdot)'$ for differentiation with respect to $\xi$, and
    $\partial_\alpha$ for differentiation with respect to $\alpha$.
  \item Denote $G_{\alpha,r} = \left|(F_{\alpha,r}^{-1})'\right|$, defined on $I$.
    Note that $G_{\alpha,r} = \pm (F_{\alpha,r}^{-1})'$, the sign depends only on $r$.
  \item For each $i$ let 
    $\|h\|_{C^i} = \max(\|h\|_\infty, \|h'\|_\infty, \ldots, \|h^{(i)}\|_\infty)$ 
    denote the $C^i$ norm of $h$.
  \item Let $m$ be the normalized Lebesgue measure on $I$, and $P_\alpha$ be the transfer operator 
    for $F_\alpha$ with respect to $m$. By definition,
    $\int (P_\alpha u) v \, dm = \int u (v \circ F_\alpha) \, dm$ for $u \in L^1 (I)$ and
    $v \in L^\infty (I)$. There is an explicit formula for $P_\alpha$:
    \[
      (P_\alpha h)(\xi) = \sum_r G_{\alpha,r}(\xi) h(F_{\alpha,r}^{-1}(\xi)).
    \]
\end{itemize}

We assume that $F_{\alpha,r}^{-1}$ and $G_{\alpha,r}$, as functions of 
$\alpha$ and $\xi$, have continuous second order partial derivatives
for each $r \in \cR$, and there are constants 
\[
  0 < \sigma < 1, \qquad K_0 > 0
  \qquad \text{and} \qquad \gamma_r \geq 1, \, r \in \cR
\]
such that
uniformly in $\alpha \in [\alpha_-, \alpha_+]$ and $r \in \cR$:
\begin{samepage}
\begin{multicols}{2}
\begin{enumerate}[label=A\arabic*., ref=A\arabic*]
  \item\label{A1} 
    $\|G_{\alpha,r}\|_\infty \leq \sigma$,
  \item\label{A2} 
    $\|G_{\alpha,r}'  \big/ G_{\alpha,r}\|_\infty  \leq K_0$,
  \item\label{A3}
    $\|G_{\alpha,r}'' \big/ G_{\alpha,r}\|_\infty  \leq K_0$,
  \columnbreak
  \item\label{A4} 
    $\left\|\partial_\alpha F_{\alpha,r}^{-1}\right\|_\infty \leq \gamma_r $,
  \item\label{A5} 
    $\left\|(\partial_\alpha G_{\alpha,r})\big/G_{\alpha,r} \right\|_\infty \leq \gamma_r $,
  \item\label{A6}
    $\left\|(\partial_\alpha G_{\alpha,r}')\big/G_{\alpha,r}\right\|_\infty \leq \gamma_r $,
  \item\label{A7} 
    $\sum_r \left\|G_{\alpha,r}\right\|_\infty \gamma_r  \leq K_0$.
\end{enumerate}
\end{multicols}
\end{samepage}

It is well known that under conditions \ref{A1}, \ref{A2}, the map 
$F_\alpha$ admits a unique absolutely continuous invariant
measure (see for example \cite{P80,Z04}), which we denote by 
$\mu_\alpha$, and its density by $h_\alpha = d\mu_\alpha / dm$.

\begin{theorem}
  \label{th:dhdalpha}
  $h_\alpha \in C^2(I)$ and $\partial_\alpha h_\alpha \in C^1(I)$
  for each $\alpha \in [\alpha_-, \alpha_+]$.
  The maps
  \[
    \begin{alignedat}{1}
      [\alpha_-, \alpha_+] & \longrightarrow C^2(I)\\
      \alpha               & \longmapsto h_\alpha 
    \end{alignedat}
    \qquad \text{and} \qquad 
    \begin{alignedat}{1}
      [\alpha_-, \alpha_+] & \longrightarrow C^1(I)\\
      \alpha               & \longmapsto \partial_\alpha h_\alpha 
    \end{alignedat}
  \]
  are continuous.  
\end{theorem}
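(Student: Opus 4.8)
The plan is to set up a functional-analytic framework on the Banach space $C^2(I)$ (and $C^1(I)$ for the derivative) and extract the regularity of $h_\alpha$ from the fixed-point equation $P_\alpha h_\alpha = h_\alpha$. First I would establish a uniform Lasota--Yorke / Doeblin--Fortet inequality: using the distortion bounds \ref{A2} and \ref{A3}, show that for some $n$, the iterate $P_\alpha^n$ contracts the $C^2$-seminorm up to a lower-order term, i.e. $\|P_\alpha^n h\|_{C^2} \le \theta \|h\|_{C^2} + C \|h\|_\infty$ with $\theta < 1$, uniformly in $\alpha \in [\alpha_-,\alpha_+]$. Here conditions \ref{A1}--\ref{A3} are exactly the standard hypotheses guaranteeing that the densities $h_\alpha$ lie in a bounded subset of $C^2(I)$, are bounded below by a positive constant, and that $P_\alpha$ has a spectral gap on $C^2(I)$ with $1$ a simple isolated eigenvalue. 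I would then show $\alpha \mapsto P_\alpha$ is continuous as a family of operators $C^2(I) \to C^1(I)$ (one loses a derivative because differentiating the explicit formula for $P_\alpha$ in $\alpha$ brings in $\partial_\alpha G_{\alpha,r}$ and $\partial_\alpha F_{\alpha,r}^{-1}$, controlled by \ref{A4}--\ref{A6}, while $\gamma_r$-summability \ref{A7} handles the countable-branch sum). Standard perturbation theory for spectral projections then yields that $\alpha \mapsto h_\alpha$ is continuous into $C^1(I)$, and bootstrapping (since each $h_\alpha$ is actually in the bounded set in $C^2$) upgrades this to continuity into $C^2(I)$.

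For differentiability in $\alpha$, I would differentiate the relation $(I - P_\alpha)h_\alpha = 0$ formally: if $\partial_\alpha h_\alpha =: \dot h_\alpha$ exists, it must satisfy the twisted cohomological equation
\[
  (I - P_\alpha)\dot h_\alpha = (\partial_\alpha P_\alpha) h_\alpha,
\]
where $\partial_\alpha P_\alpha$ is the operator obtained by differentiating the kernel of $P_\alpha$ termwise. The right-hand side lives in $C^1(I)$ (not $C^2$, again by the loss of one derivative), and it integrates to zero against $m$ because $\int P_\alpha u\,dm = \int u\,dm$ for all $\alpha$; hence it lies in the complement of the top eigenspace, on which $I - P_\alpha$ is boundedly invertible on $C^1(I)$. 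This defines a candidate $\dot h_\alpha := (I - P_\alpha)^{-1}(\partial_\alpha P_\alpha)h_\alpha \in C^1(I)$, normalized by $\int \dot h_\alpha\,dm = 0$. To justify that this candidate really is the derivative, I would write $h_{\alpha+\varepsilon} - h_\alpha$ using the two fixed-point equations, expand $P_{\alpha+\varepsilon} = P_\alpha + \varepsilon\,\partial_\alpha P_\alpha + o(\varepsilon)$ (the remainder controlled in the $C^2 \to C^1$ operator norm by the assumed continuity of the \emph{second} partials in $\alpha$), and invert $I - P_\alpha$ on the appropriate invariant subspace to get $h_{\alpha+\varepsilon} - h_\alpha = \varepsilon\,\dot h_\alpha + o(\varepsilon)$ in $C^1(I)$. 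Continuity of $\alpha \mapsto \dot h_\alpha$ into $C^1(I)$ then follows from continuity of each of the three ingredients $(I-P_\alpha)^{-1}$, $\partial_\alpha P_\alpha$, and $h_\alpha$.

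The main obstacle is the systematic loss of one derivative: $\partial_\alpha P_\alpha$ maps $C^2$ into $C^1$ but not into $C^2$, so one cannot run the whole argument in a single Banach space. The resolution is to work with the pair $(C^2, C^1)$ throughout — establish the spectral gap and the bound on $\{h_\alpha\}$ in $C^2$, but carry out all the perturbation and differentiation estimates in the weaker norm $C^1$, where $(I - P_\alpha)$ restricted to the codimension-one invariant subspace $\{\int u\,dm = 0\}$ is invertible with norm uniform in $\alpha$. A secondary technical point is uniform control of the countable sum over branches when differentiating the transfer-operator formula twice in $\alpha$ and up to twice in $\xi$; this is precisely what the weighted summability condition \ref{A7} together with the pointwise bounds \ref{A4}--\ref{A6} is designed to provide, and verifying the interchange of $\sum_r$ with $\partial_\alpha$ and $(\cdot)'$ is the routine-but-necessary estimate I would defer to the technical section.
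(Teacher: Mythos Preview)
Your proposal is correct and the overall architecture---uniform spectral gap for $P_\alpha$, loss of one derivative when differentiating $P_\alpha$ in $\alpha$, and the formula $\partial_\alpha h_\alpha = (I-P_\alpha)^{-1}(\partial_\alpha P_\alpha)h_\alpha$ on the mean-zero subspace---matches the paper exactly. The differences are in packaging. Where you invoke a Lasota--Yorke inequality $\|P_\alpha^n h\|_{C^2}\le \theta\|h\|_{C^2}+C\|h\|_\infty$ and abstract spectral/Keller--Liverani perturbation theory, the paper proceeds by a cone-contraction/coupling argument: it introduces the ``log-derivative'' norms $\|h\|_L=\|h'/h\|_\infty$ and $\|h\|_P=\|h''/h\|_\infty$, shows that $P_\alpha$ preserves a cone of ``superregular'' densities, and obtains exponential decay of $P_\alpha^n$ on mean-zero $C^i$ functions by subtracting a multiple of Lebesgue at each step. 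For the derivative, rather than inverting $I-P_\alpha$ abstractly, the paper writes the telescoping identity $h_\beta-h_\alpha=\sum_{k\ge 0}P_\beta^k(P_\beta-P_\alpha)h_\alpha$, expands $P_\beta-P_\alpha=(\beta-\alpha)Q_\alpha+o(\beta-\alpha)$ in $C^1$, and reads off $\partial_\alpha h_\alpha=\sum_{k\ge0}P_\alpha^k Q_\alpha h_\alpha$ directly---which is of course the Neumann series for your $(I-P_\alpha)^{-1}$. The paper's route is more elementary (no black-box perturbation theory) and yields explicit constants, while yours is more conceptual. One small inaccuracy: you will not need to differentiate twice in $\alpha$; the hypotheses only supply (and the argument only uses) one $\alpha$-derivative of $G_{\alpha,r}$, $G_{\alpha,r}'$ and $F_{\alpha,r}^{-1}$, together with continuity of these in $(\alpha,\xi)$.
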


The proof of Theorem \ref{th:dhdalpha} is postponed until 
Section \ref{section:dhdalpha}.

\begin{remark}
  Later in the proof of Theorem \ref{th:dhdalpha}
  we explicitly compute constants $K_{1}$ and $K_{2}$, 
  such that $\|h_\alpha\|_{C^2} \leq K_{1}$ and
  $\|\partial_\alpha h_\alpha\|_{C^1} \leq K_{2}$.
  These constants depend only on $K_{0}$ and $\sigma$.
  Below we use $K_{1}$ and $K_{2}$ as reference bounds on
  $\|h_\alpha\|_{C^2}$ and
  $\|\partial_\alpha h_\alpha\|_{C^1}$.
\end{remark}

\begin{remark}
  Both $h_\alpha(\xi)$ and $(\partial_\alpha h_\alpha)(\xi)$ are continuous
  in $\alpha$, and continuous in $\xi$ uniformly in $\alpha$, 
  because $\|h_\alpha\|_{C^2} \leq K_{1}$.
  Therefore both are jointly continuous in $\alpha$ and $\xi$.
\end{remark}

\begin{corollary}
  \label{cor:linresp}
  Assume that $\Phi_\alpha$ is a family of observables,  such that
  $\Phi_\alpha (F_{\alpha,r}^{-1}(\xi))$ and
  $\partial_\alpha [\Phi_\alpha (F_{\alpha,r}^{-1}(\xi))]$
  are jointly continuous in $\alpha$ and $\xi$ for each $r$,
  and
  \[
    \|\Phi_\alpha \circ F_{\alpha,r}^{-1}\|_\infty \leq \delta_r,
    \qquad
    \|\partial_\alpha [\Phi_\alpha \circ F_{\alpha,r}^{-1}] \|_\infty \leq \delta_r
  \]
  for some constants $\delta_r\geq1$, $r \in \cR$.
  Assume also that 
  $
    \sum_r \gamma_r \, \delta_r \,\left\| G_{\alpha,r} \right\|_\infty \leq K_{3}.
  $
  Then the map $\alpha \mapsto \int \Phi_\alpha\, d\mu_\alpha$ is 
  continuously differentiable on $[\alpha_-, \alpha_+]$.
\end{corollary}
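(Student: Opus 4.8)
The plan is to write $\int \Phi_\alpha \, d\mu_\alpha = \int \Phi_\alpha \, h_\alpha \, dm$ and show this is continuously differentiable in $\alpha$ by exhibiting a continuous candidate derivative and invoking the fundamental theorem of calculus. The natural candidate, obtained by formally differentiating under the integral sign, is
\[
  \alpha \longmapsto \int \left[ (\partial_\alpha \Phi_\alpha)\, h_\alpha + \Phi_\alpha\, \partial_\alpha h_\alpha \right] dm.
\]
First I would make sense of this expression: since $\Phi_\alpha$ itself need not be regular, I rewrite everything using the transfer operator. Using $h_\alpha = P_\alpha h_\alpha$ and the definition $\int u\,(v\circ F_\alpha)\,dm = \int (P_\alpha u)\,v\,dm$, one has $\int \Phi_\alpha\, h_\alpha\, dm = \int \Phi_\alpha\, (P_\alpha h_\alpha)\, dm$, but more to the point I would push $\Phi_\alpha$ through the branch decomposition directly: write
\[
  \int \Phi_\alpha\, h_\alpha\, dm = \sum_r \int_I \Phi_\alpha\bigl(F_{\alpha,r}^{-1}(\xi)\bigr)\, h_\alpha\bigl(F_{\alpha,r}^{-1}(\xi)\bigr)\, G_{\alpha,r}(\xi)\, dm(\xi),
\]
which follows from the change of variables on each branch. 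Every factor in the $r$-th summand is now, by hypothesis and by Theorem~\ref{th:dhdalpha}, $C^1$ in $\alpha$ with values in $C^0(I)$, hence the summand is $C^1$ in $\alpha$, and its $\alpha$-derivative is the sum of three terms obtained by the product rule.

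Next I would establish that the series of $\alpha$-derivatives converges uniformly in $\alpha \in [\alpha_-,\alpha_+]$, which is what licenses termwise differentiation of the sum. The $r$-th derivative term is bounded, using $\|h_\alpha\|_{C^0}\le\|h_\alpha\|_{C^1}\le K_1$, $\|\partial_\alpha h_\alpha\|_{C^0}\le K_2$, the hypotheses $\|\Phi_\alpha\circ F_{\alpha,r}^{-1}\|_\infty\le\delta_r$ and $\|\partial_\alpha[\Phi_\alpha\circ F_{\alpha,r}^{-1}]\|_\infty\le\delta_r$, and assumption~\ref{A5} $\|(\partial_\alpha G_{\alpha,r})/G_{\alpha,r}\|_\infty\le\gamma_r$, by a constant multiple of
\[
  \delta_r\,\gamma_r\,\|G_{\alpha,r}\|_\infty\bigl(K_1 + K_2 + K_1\bigr),
\]
so the series of derivatives is dominated by $\const\,(K_1+K_2)\sum_r \gamma_r\,\delta_r\,\|G_{\alpha,r}\|_\infty \le \const\,(K_1+K_2)\,K_3 < \infty$. (Here I use $\gamma_r\ge1$, $\delta_r\ge1$ to absorb the terms where one or the other factor is absent.) Joint continuity of each summand in $(\alpha,\xi)$, together with this uniform bound, gives that the sum of the derivative series is continuous in $\alpha$.

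Finally, having a uniformly convergent series whose termwise derivatives also converge uniformly, I conclude that $\alpha\mapsto\int\Phi_\alpha\,d\mu_\alpha$ is differentiable with derivative equal to the sum of the derivative series, and the latter is continuous in $\alpha$ by the previous step; this is exactly continuous differentiability on $[\alpha_-,\alpha_+]$. The only mild subtlety — the main thing to get right rather than a genuine obstacle — is bookkeeping the domination: one must check that the bound on $\|h_\alpha\|_{C^2}$ and $\|\partial_\alpha h_\alpha\|_{C^1}$ from Theorem~\ref{th:dhdalpha} is uniform in $\alpha$ (it is, since $K_1,K_2$ depend only on $K_0,\sigma$), and that assumptions \ref{A5} and the summability \ref{A7} are being invoked with the correct powers of $\gamma_r$; after that, the estimate is routine and the result follows from standard term-by-term differentiation of series.
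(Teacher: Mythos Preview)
Your proposal is correct and follows essentially the same approach as the paper: write $\int \Phi_\alpha\,d\mu_\alpha$ as the branchwise series $\sum_r \int (h_\alpha\circ F_{\alpha,r}^{-1})(\Phi_\alpha\circ F_{\alpha,r}^{-1})G_{\alpha,r}\,dm$, differentiate termwise, and dominate using Theorem~\ref{th:dhdalpha} and the summability $\sum_r \gamma_r\delta_r\|G_{\alpha,r}\|_\infty\le K_3$. One small bookkeeping point: when you differentiate $h_\alpha\circ F_{\alpha,r}^{-1}$ in $\alpha$ the chain rule produces an extra piece $(h_\alpha'\circ F_{\alpha,r}^{-1})\,\partial_\alpha F_{\alpha,r}^{-1}$, so you also need assumption~\ref{A4} (not just~\ref{A5}) to pick up a factor $\gamma_r$ there; this makes the constant $3K_1+K_2$ rather than $2K_1+K_2$, but the argument is unchanged.
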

\begin{proof}
  First, 
  \begin{align*}
    \int \Phi_\alpha \, d \mu_\alpha 
    & = \int h_\alpha \Phi_\alpha \, dm 
    = \int P_\alpha(h_\alpha \Phi_\alpha) \, dm \\
    & = \int \left( \sum_r (h_\alpha \circ F_{\alpha,r}^{-1}) 
      (\Phi_\alpha\circ F_{\alpha,r}^{-1}) G_{\alpha,r} \right) \, dm.
  \end{align*}
  By Theorem \ref{th:dhdalpha} and our assumptions,
  $h_\alpha \circ F_{\alpha,r}^{-1}$, 
  $\Phi_\alpha\circ F_{\alpha,r}^{-1}$ and $G_{\alpha,r}$
  are jointly continuous in $\alpha$ and $\xi$, and
  have jointly continuous partial derivatives by $\alpha$. 
  Since $\|h_\alpha \|_{C^2} \leq K_{1}$,
  $\|\Phi_\alpha \circ F_{\alpha,r}^{-1}\|_\infty \leq \delta_r$
  and $\sum_r \delta_r \, \gamma_r \, \|G_{\alpha,r}\|_{\infty} \leq K_{3}$,
  the series inside the integral converges uniformly to a function
  which is jointly continuous in $\alpha$ and $\xi$. Therefore,
  $\int \Phi_\alpha \, d \mu_\alpha$ depends continuously on $\alpha$.
  
  Moreover, since
  $\|\partial_\alpha h_\alpha\|_{C^1} \leq K_{2}$,
  $\|\partial_\alpha F_{\alpha,r}^{-1}\|_\infty \leq \gamma_r$,
  $|(\Phi_\alpha\circ F_{\alpha,r}^{-1})| \leq \delta_r$,
  $|\partial_\alpha (\Phi_\alpha\circ F_{\alpha,r}^{-1})|\leq \delta_r$
  and
  $|\partial_\alpha [G_{\alpha,r}]| \leq \gamma_r \, G_{\alpha,r}$,
  we can write
  \begin{align*}
    \left|\partial_\alpha (h_\alpha \circ F_{\alpha,r}^{-1})\right|
    & = \left| [\partial_\alpha h_\alpha]\circ F_{\alpha,r}^{-1}
      + (h_\alpha'\circ F_{\alpha,r}^{-1}) \, \partial_\alpha F_{\alpha,r}^{-1} \right|\\
    & \leq K_{2} + K_{1} \gamma_r \qquad \text{and} \\
    \left|
      \partial_\alpha 
      \left[ (h_\alpha \circ F_{\alpha,r}^{-1}) 
      (\Phi_\alpha\circ F_{\alpha,r}^{-1}) G_{\alpha,r} \right]
    \right|
    & \leq \left[(K_{2}+K_{1} \gamma_r) + K_{1} + K_{1} \gamma_r
    \right] \, \delta_r \, G_{\alpha,r} \\
    & \leq \left(3 K_{1} + K_{2}\right) \, \delta_r \, \gamma_r \, G_{\alpha,r}.
  \end{align*}
  Since $\sum_r \delta_r \, \gamma_r \, \|G_{\alpha,r}\|_{\infty} \leq K_{3}$, we can write
  \begin{align*}
    \frac{d}{d\alpha} \int \Phi_\alpha \, d \mu_\alpha 
    & = \frac{d}{d\alpha}  \int \left( \sum_r (h_\alpha \circ F_{\alpha,r}^{-1}) 
      (\Phi_\alpha\circ F_{\alpha,r}^{-1}) G_{\alpha,r} \right) \, dm \\
    & = \sum_r \int \partial_\alpha 
      \left[ (h_\alpha \circ F_{\alpha,r}^{-1}) 
      (\Phi_\alpha\circ F_{\alpha,r}^{-1}) G_{\alpha,r} \right] \, dm.     
  \end{align*}
  The series converges uniformly, and is bounded by $K_{3} \left(3 K_{1} + K_{2}\right)$.
  The terms are continuous in $\alpha$, thus so is the sum.
\end{proof}

\section{Application to Pomeau-Manneville type maps}
\label{section:LSVintro}

In this section we work with the family of maps $T_\alpha$, defined
by equation (\ref{eq:LSVT}).
Assume that $\alpha \in [\alpha_-,\alpha_+] \subset (0,1)$.
Let $\tau_\alpha(x) = \min \{ k\geq 1 \colon T_\alpha^k x \in [1/2,1] \}$ 
be the return time to the interval $[1/2,1]$.
Let
\[
  F_\alpha \colon [1/2,1] \to [1/2,1], \qquad \qquad
        x \mapsto T_\alpha^{\tau_\alpha(x)}(x)
\]
be the induced map. 

\paragraph{Branches.} Let $x_0 = 1$, $x_1=1/2$, and define 
$x_k \in (0,1/2]$ for $k \geq 1$ by setting 
$x_k = T_\alpha x_{k+1}$.
Note that $T_\alpha^k \colon (x_{k+1}, x_k) \to (1/2,1)$
is a diffeomorphism. Let $y_k = (1+x_k)/2$. Then 
$T_\alpha^{k+1} \colon (y_{k+1}, y_k) \to (1/2,1)$
is a diffeomorphism.
It is clear that $\tau_\alpha = k+1$ on $(y_{k+1}, y_k)$,
so the map $F_\alpha$ has full branches on the 
intervals $(y_{k+1},y_k)$ for $k\geq 0$.

We index branches by $r \in \cR = \bN \cup \{0\}$, the $r$-th branch being 
the one on $(y_{r+1},y_r)$.
Let $F_{\alpha,r}: [y_{r+1}, y_r] \to [0,1]$ be the continuous extension of
$F_\alpha: (y_{r+1}, y_r) \to (0,1)$.

For notational convenience we introduce a function
\[
  \logg(r) = \begin{cases} 1 & r \leq e \\ \log(r)  & r > e \end{cases}.
\]

Let $\varphi \in C^1[0,1]$ be an observable;
let 
\begin{equation}
  \label{eq:anr3}
  \iphi = \sum_{k=0}^{\tau_\alpha-1} \varphi \circ  T_\alpha^k
\end{equation}
be the corresponding observable for the induced system. 

\begin{theorem}
  \label{th:lsv}
  The family of maps $F_\alpha = T_\alpha^{\tau_\alpha} \colon [1/2,1] \to [1/2,1]$ with
  observables $\iphi$ fits into the setup of Theorem \ref{th:dhdalpha}
  and Corollary \ref{cor:linresp}
  with branches indexed as above, $\delta_r = K (r+1) \, \|\varphi\|_{C^1}$, 
  $\sigma = 1/2$, and $\gamma_r = K (\logg r)^3$,
  where $K$ is a constant, depending only on $\alpha_-$ and $\alpha_+$.
\end{theorem}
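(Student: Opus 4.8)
The statement says that $F_\alpha$ together with the observables $\iphi$ satisfy assumptions A1--A7 of the abstract framework (with $\sigma = 1/2$, $\gamma_r = K(\logg r)^3$) and the extra hypotheses of Corollary~\ref{cor:linresp} (with $\delta_r = K(r+1)\|\varphi\|_{C^1}$), so the proof will be a verification of each of these, uniformly in $\alpha\in[\alpha_-,\alpha_+]$ and $r$. The plan is to start by making the branch structure explicit. Write $L_\alpha(x) = x(1+2^\alpha x^\alpha)$ for the left branch of $T_\alpha$ and $g_\alpha = L_\alpha^{-1}\colon[0,1]\to[0,1/2]$ for its inverse; then on the $r$-th branch $F_{\alpha,r} = T_\alpha^{r+1}$ factors as the $\alpha$-independent doubling $x\mapsto 2x-1$ (sending $(y_{r+1},y_r)$ onto $(x_{r+1},x_r)$) followed by $r$ iterates of $L_\alpha$, so that
\[
  F_{\alpha,r}^{-1}(\xi) = \tfrac12\bigl(1+g_\alpha^{\circ r}(\xi)\bigr)
  \qquad\text{and}\qquad
  T_\alpha^{k}\circ F_{\alpha,r}^{-1} = g_\alpha^{\circ(r+1-k)}\ \ (1\le k\le r).
\]
In particular $G_{\alpha,r} = 1/\bigl((T_\alpha^{r+1})'\circ F_{\alpha,r}^{-1}\bigr)$, and since $(T_\alpha^{r+1})'$ has the single factor $T_\alpha'=2$ from the doubling and $r$ factors $T_\alpha' = 1+2^\alpha(1+\alpha)(\cdot)^\alpha\ge1$ from $L_\alpha$, we get $G_{\alpha,r}\le1/2$, which is A1 with $\sigma=1/2$. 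The $C^2$ dependence of $F_{\alpha,r}^{-1},G_{\alpha,r}$ on $(\alpha,\xi)$ required by the framework and the joint-continuity hypotheses of Corollary~\ref{cor:linresp} reduce to the smoothness of $L_\alpha$ (note $\xi^\alpha$, $\xi^\alpha\log\xi$, $\xi^\alpha(\log\xi)^2$ extend continuously to $\xi=0$) together with $L_\alpha'\ge1$.

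The quantitative core is a bundle of estimates near the neutral fixed point, all uniform in $\alpha$: (i) the classical asymptotics $x_k\asymp k^{-1/\alpha}$, hence $x_k-x_{k+1}\asymp k^{-1/\alpha-1}$, together with $|\partial_\alpha x_k|\lesssim x_k\logg k$, the latter obtained by differentiating the recursion $x_{k+1}=g_\alpha(x_k)$ in $\alpha$ and solving the resulting linear recursion; (ii) bounded distortion for the full-branch maps, i.e.\ uniform bounds on $\|(\log G_{\alpha,r})'\|_\infty$ and $\|(\log G_{\alpha,r})''\|_\infty$ --- the standard Adler-type estimate, which gives A2, A3 at once, and also $\|G_{\alpha,r}\|_\infty\asymp\int G_{\alpha,r}\,dm\asymp y_r-y_{r+1}\asymp r^{-1/\alpha-1}$; and (iii) control of the $\alpha$-derivatives of the inverse-branch compositions $g_\alpha^{\circ m}$ on $(1/2,1)$. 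For (iii) I would use $\partial_\alpha g_\alpha(\xi) = -(\partial_\alpha L_\alpha)(g_\alpha\xi)/L_\alpha'(g_\alpha\xi)$ with $|\partial_\alpha L_\alpha(x)| = 2^\alpha x^{1+\alpha}|\log2x|$, so that $\partial_\alpha g_\alpha$ is of size $\asymp j^{-1/\alpha-1}\logg j$ at $g_\alpha^{\circ j}\xi$, and then the chain rule
\[
  \partial_\alpha\bigl(g_\alpha^{\circ m}\bigr)(\xi)
   = \sum_{j=0}^{m-1}\bigl(g_\alpha^{\circ(m-1-j)}\bigr)'\!\bigl(g_\alpha^{\circ(j+1)}\xi\bigr)\,(\partial_\alpha g_\alpha)\bigl(g_\alpha^{\circ j}\xi\bigr)
\]
together with (i), (ii) shows each summand is $O\bigl(m^{-1/\alpha-1}\logg j\bigr)$, whence $\|\partial_\alpha(g_\alpha^{\circ m})\|_\infty\lesssim m^{-1/\alpha}\logg m$, bounded uniformly in $m$; taking one or two more $\xi$-derivatives before the $\alpha$-derivative produces the same kind of telescoping sums over the $\asymp m$ orbit points, still summable but with extra logarithmic factors.

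With (i)--(iii) in hand the rest is bookkeeping. Since $\partial_\alpha F_{\alpha,r}^{-1} = \tfrac12\partial_\alpha(g_\alpha^{\circ r})$, (iii) gives A4 with room to spare. Writing $G_{\alpha,r} = \exp(\log G_{\alpha,r})$ and expanding $\partial_\alpha\log G_{\alpha,r}$ and $\partial_\alpha(\log G_{\alpha,r})'$ as sums over the $r$ orbit points, each term estimated as in (iii) using $T_\alpha''(y)\asymp y^{\alpha-1}$, yields A5, A6; a crude, non-sharp count of the logarithmic factors created by these nested differentiations produces the exponent $3$, i.e.\ $\gamma_r = K(\logg r)^3$, and A7 then holds because $\sum_r\|G_{\alpha,r}\|_\infty\gamma_r\lesssim\sum_r r^{-1/\alpha-1}(\logg r)^3<\infty$. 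For the observable, on the $r$-th branch $\iphi = \sum_{k=0}^r\varphi\circ T_\alpha^k$, so $\iphi\circ F_{\alpha,r}^{-1} = \varphi(F_{\alpha,r}^{-1})+\sum_{m=1}^r\varphi\circ g_\alpha^{\circ m}$, which is bounded by $(r+1)\|\varphi\|_\infty$, while its $\alpha$-derivative $\varphi'(F_{\alpha,r}^{-1})\,\partial_\alpha F_{\alpha,r}^{-1}+\sum_m(\varphi'\circ g_\alpha^{\circ m})\,\partial_\alpha(g_\alpha^{\circ m})$ is bounded by $(r+1)K\|\varphi\|_{C^1}$ by (iii); joint continuity of $\iphi\circ F_{\alpha,r}^{-1}$ and its $\alpha$-derivative in $(\alpha,\xi)$ is immediate. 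This gives both Corollary~\ref{cor:linresp} bounds with $\delta_r = K(r+1)\|\varphi\|_{C^1}$, and finally $\sum_r\gamma_r\delta_r\|G_{\alpha,r}\|_\infty\lesssim\|\varphi\|_{C^1}\sum_r r^{-1/\alpha}(\logg r)^3<\infty$ since $\alpha<1$, so the constant $K_3$ exists.

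I expect the main obstacle to be ingredient (iii) in full strength --- the uniform (in $\alpha$ and in the branch index) control of the mixed $\alpha$- and $\xi$-derivatives of the length-$r$ compositions near the neutral fixed point that feeds A4, A5, A6. The subtle point is that although such a composition visits $\asymp r$ points accumulating at $0$, the perturbation contributed at the $j$-th step has size only $\asymp j^{-1}$ up to logarithms, so the dangerous-looking sums telescope and leave only a fixed power of $\logg r$; making this rigorous, with constants depending only on $\alpha_-$ and $\alpha_+$, is the bulk of the technical work deferred to Section~\ref{section:LSV}.
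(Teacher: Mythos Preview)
Your proposal is correct and follows essentially the same route as the paper's proof in Subsection~\ref{subsection:proofofth:lsv}: both reduce the verification of A1--A7 and the hypotheses of Corollary~\ref{cor:linresp} to controlling the inverse-branch orbit $z_j = g_\alpha^{\circ j}(\xi)$ and its $\xi$- and $\alpha$-derivatives via recursions/telescoping sums whose increments decay like $j^{-2}$ or $j^{-1}$ up to logarithms (the paper's Lemmas~\ref{lemma:Iansestimate}--\ref{lemma:dIpalpha} are exactly your ingredients (i)--(iii), written in recursion form rather than as chain-rule sums). The only place your sketch is slightly loose is the appeal to a ``standard Adler-type estimate'' for A2 and A3: since $T_\alpha''/(T_\alpha')^2$ blows up at the neutral fixed point the literal Adler condition fails, and the paper instead bounds $z_r''/z_r'$ and $z_r'''/z_r'$ directly by showing their increments are $O((r+1)^{-2})$ and $O((r+1)^{-3})$ (Lemmas~\ref{lemma:Ipzk} and \ref{lemma:Ippzk}) --- but this is precisely the telescoping mechanism you already invoke for (iii), so there is no real gap.
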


The proof consists of verification of the assumptions of
Theorem~\ref{th:dhdalpha} and Corollary~\ref{cor:linresp},
and is carried out in Subsection \ref{subsection:proofofth:lsv}.
Here we use Theorem \ref{th:lsv} to prove our main result~--- 
Theorem~\ref{th:LSVmain}.

\begin{proof}[Proof of Theorem \ref{th:LSVmain}]
  The invariant measure $\nu_\alpha$ for $T_\alpha$ is related to the
  invariant measure $\mu_\alpha$ for $F_\alpha$ by Kac's formula:
  \[
    \int \varphi \, d\nu_\alpha = \int \iphi \, d \mu_\alpha \; \Big/
    \int \tau_\alpha d \mu_\alpha,
  \]
  where \(\iphi\) is given by (\ref{eq:anr3}).
  Note that if $\varphi \equiv 1$, then $\iphi = \tau_\alpha$.
  By Theorem \ref{th:lsv}, both integrals are continuously
  differentiable in $\alpha$. Also, $\tau_\alpha \geq 1$, so
  $\int \tau_\alpha d \mu_\alpha \geq 1$.
  Hence $\int \varphi \, d\nu_\alpha$ is continuously differentiable
  in $\alpha$.
\end{proof}

\section{Proof of Theorem \ref{th:dhdalpha}}
  \label{section:dhdalpha}

For $h \in C^1(I)$ and $\alpha \in [\alpha_-, \alpha_+]$ define
$Q_\alpha h = \partial_\alpha (P_\alpha h)$, if the derivative
exists. Denote
\[
  \cQ_h (\alpha) = Q_\alpha h,
  \qquad \qquad
  \cP_h (\alpha) = P_\alpha h.
\]

\subsection{Outline of the proof}
  \label{subsection:scheme}

The proof consists of three steps:
\paragraph{(a) Continuity (Subsection \ref{subsection:continuity}).}
We show that for $i=1,2$ the linear operators
\[
  P_\alpha \colon C^i(I) \to C^i(I) \qquad \text{and} \qquad
  Q_\alpha \colon C^i(I) \to C^{i-1}(I)
\]
are well defined, and their norms are bounded uniformly in $\alpha$.
Plus, they continuously depend on $\alpha$ in the following sense:
for each $h\in C^i(I)$ the maps
$\cP_h \colon [\alpha_-, \alpha_+] \to C^i(I)$
and
$\cQ_h \colon [\alpha_-, \alpha_+] \to C^{i-1}(I)$
are continuous.
Moreover, the map $\cP_h \colon [\alpha_-, \alpha_+] \to C^{i-1}(I)$
is continuously differentiable, and its derivative
is $\cQ_h \colon [\alpha_-, \alpha_+] \to C^{i-1}(I)$.

In addition, $\int Q_\alpha h \, dm = 0$ for every $h \in C^1(I)$.

\paragraph{(b) Distortion bounds and coupling
(Subsection \ref{subsection:distortionsandcoupling}).}
\begin{itemize}
  \item If $h$ is in $C^i(I)$ for $i=1 \text{ or } 2$, and 
    $\int h \, dm = 0$, then $\|P_\alpha^k h\|_{C^i} \to  0$ exponentially fast,
    uniformly in $\alpha$.
  \item If $h \in C^2(I)$ and $\int h \, dm = 1$, then
    $
      h_\alpha 
      = \lim_{k\to\infty} P_\alpha^k h 
      = h + \sum_{k=0}^\infty P_\alpha^k (P_\alpha h - h)
    $
  \item The series above converges exponentially fast in $C^2$, and 
    $\|h_\alpha\|_{C^2}$ is bounded on $[\alpha_-, \alpha_+]$.
  \item The map $\alpha \mapsto h_\alpha$ from $[\alpha_-, \alpha_+]$ to $C^2(I)$
    is continuous.
\end{itemize}

\paragraph{(c) Computation of $\partial_\alpha h_\alpha$.}
Fix $\alpha \in [\alpha_-, \alpha_+]$.
Start with a formula, which holds for every $\alpha$, $\beta$ and $n$:
\begin{equation*}
  P_\beta^n h_\alpha - P_\alpha^n h_\alpha
  = \sum_{k=0}^{n-1} P_\beta^k (P_\beta - P_\alpha) h_\alpha.
\end{equation*}
Since the terms in the above sum converge exponentially fast in $C^2$,
we can take the limit $n \to \infty$. Note that $P_\alpha^n h_\alpha = h_\alpha$
and $\lim_{n \to \infty} P_\beta^n h_\alpha = h_\beta$. Hence
\begin{equation*}
  h_\beta - h_\alpha
  = \sum_{k=0}^{\infty} P_\beta^k (P_\beta - P_\alpha) h_\alpha.
\end{equation*}
For fixed $\alpha$, recall that the map 
$\beta \mapsto P_\beta h_\alpha$ from
$[\alpha_-, \alpha_+]$ to $C^1(I)$ is continuously differentiable, 
and its derivative is the map $\beta \mapsto Q_\beta h_\alpha$,
hence
\[
  (P_\beta - P_\alpha) h_\alpha
  = (\beta - \alpha) \; Q_\alpha h_\alpha + R_\beta,
\]
with $\|R_\beta\|_{C^1} = o(\beta - \alpha)$. 
Note that both $Q_\alpha h$ and $R_\beta$ have zero mean. Next,
\[
  h_\beta - h_\alpha
  = (\beta - \alpha) \sum_{k=0}^{\infty} P_\beta^k Q_\alpha h_\alpha
  + \sum_{k=0}^{\infty} P_\beta^k R_\beta.
\]
Both series converge exponentially fast in $C^1(I)$, uniformly in $\alpha$ and $\beta$,
and the $C^1$ norm of the second one is $o(\alpha - \beta)$.

Observe that the maps 
  \[
    \begin{alignedat}{1}
      [\alpha_-, \alpha_+] \times C^1(I) & \longrightarrow C^1(I)\\
      \alpha, h               & \longmapsto P_\alpha h
    \end{alignedat}
    \qquad \text{and} \qquad 
    \begin{alignedat}{1}
      [\alpha_-, \alpha_+] \times C^2(I) & \longrightarrow C^1(I)\\
      \alpha,h               & \longmapsto Q_\alpha h 
    \end{alignedat}
  \]
are continuous in $\alpha$, and continuous in $h$ uniformly in $\alpha$,
because $P_\alpha$ and $Q_\alpha$ are linear operators, 
bounded uniformly in $\alpha$.
Thus both maps are jointly continuous in $\alpha$ and $h$.
Recall that the map $\alpha \to h_\alpha$ from $[\alpha_-, \alpha_+]$
to $C^2(I)$ is continuous. Thus the map
$\alpha, \beta \mapsto P_\beta^k Q_\alpha h_\alpha$
from $[\alpha_-, \alpha_+]^2$ to $C^1(I)$ is continuous.

Therefore, in $C^1(I)$ topology,
\[
  \lim_{\beta \to \alpha}
  \frac{h_\beta - h_\alpha}{\beta - \alpha} = \sum_{k=0}^{\infty} P_\alpha^k Q_\alpha h_\alpha,
\]
and $\sum_{k=0}^{\infty} P_\alpha^k Q_\alpha h_\alpha$ continuously
depends on $\alpha$.
Note that the above also implies that
$\partial_\alpha h_\alpha = \sum_{k=0}^\infty P_\alpha^k Q_\alpha h_\alpha$
(if understood pointwise).

Therefore the map $\alpha \mapsto h_\alpha$ from $[\alpha_-, \alpha_+]$ to
$C^1(I)$ is continuously differentiable, and its derivative is
$\alpha \mapsto \partial_\alpha h_\alpha = \sum_{k=0}^{\infty} P_\alpha^k Q_\alpha h_\alpha$.

In the remainder of this section we make the above precise.

\subsection{Continuity}
\label{subsection:continuity}

\begin{lemma}
  \label{lemma:diffalpha1}
  Let $K_{6} = 4 K_0 (1 + K_0)$.
  For each $i=1,2$ and $h \in C^i(I)$:
  \begin{enumerate}[label=\alph*), ref=\alph*]
    \item
      The map $\cP_h \colon [\alpha_-, \alpha_+] \to C^i(I)$ is continuous.
      Also, $\|P_\alpha h \|_{C^i} \leq K_{6} \|h \|_{C^i}$.
    \item 
      The map $\cQ_h \colon [\alpha_-, \alpha_+] \to C^{i-1}(I)$ is continuous.
      Also, $\|Q_\alpha h\|_{C^{i-1}} \leq K_{6} \|h \|_{C^i}$.
    \item
      The map $\cQ_h \colon [\alpha_-, \alpha_+] \to C^{i-1}(I)$
      is the derivative of the map $\cP_h \colon [\alpha_-, \alpha_+] \to C^{i-1}(I)$.
    \item 
      $\int Q_\alpha h\, dm = 0$.
  \end{enumerate}

\end{lemma}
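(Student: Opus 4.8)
The plan is to prove Lemma~\ref{lemma:diffalpha1} by working directly from the explicit series formula
\[
  (P_\alpha h)(\xi) = \sum_r G_{\alpha,r}(\xi)\, h(F_{\alpha,r}^{-1}(\xi))
\]
and differentiating it term by term, using assumptions \ref{A1}--\ref{A7} to justify uniform convergence of the resulting series. First I would establish part (a): write $u_r(\alpha,\xi) = G_{\alpha,r}(\xi)\, h(F_{\alpha,r}^{-1}(\xi))$, and estimate $\|u_r\|_{C^i}$ in $\xi$. The key mechanism is that every $\xi$-derivative of $G_{\alpha,r}$ produces a factor bounded by $K_0$ relative to $G_{\alpha,r}$ itself (by \ref{A2}, \ref{A3}), and every $\xi$-derivative of $h \circ F_{\alpha,r}^{-1}$ produces a factor $(F_{\alpha,r}^{-1})' = \pm G_{\alpha,r}$, which is bounded by $\sigma < 1 \leq 1$ via \ref{A1}; combining, $\|u_r\|_{C^i} \leq C(K_0)\,\|G_{\alpha,r}\|_\infty \|h\|_{C^i}$ for a combinatorial constant, and since $\sum_r \|G_{\alpha,r}\|_\infty \leq K_0$ by \ref{A7} (taking $\gamma_r \geq 1$), the series $\sum_r u_r$ converges in $C^i(I)$ with $\|P_\alpha h\|_{C^i} \leq K_6 \|h\|_{C^i}$; one must check the combinatorics give exactly $K_6 = 4K_0(1+K_0)$. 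Continuity of $\alpha \mapsto P_\alpha h$ in $C^i$ then follows because each $u_r(\alpha,\cdot)$ is continuous in $\alpha$ into $C^i$ (the $\alpha$-dependence of $F_{\alpha,r}^{-1}$ and $G_{\alpha,r}$ is $C^2$ by hypothesis) and the series converges uniformly in $\alpha$ by the same $\ell^1$ bound.

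Next, for parts (b) and (c), I would differentiate $u_r$ with respect to $\alpha$:
\[
  \partial_\alpha u_r = (\partial_\alpha G_{\alpha,r})\, h\!\circ\! F_{\alpha,r}^{-1}
  + G_{\alpha,r}\, (h' \!\circ\! F_{\alpha,r}^{-1})\, \partial_\alpha F_{\alpha,r}^{-1}.
\]
Here is where the loss of one derivative shows up: the second term contains $h'$, so I can only control $\partial_\alpha u_r$ in $C^{i-1}$, not $C^i$. Using \ref{A4} ($\|\partial_\alpha F_{\alpha,r}^{-1}\|_\infty \leq \gamma_r$), \ref{A5}, \ref{A6} (which control $\partial_\alpha G_{\alpha,r}$ and $\partial_\alpha G_{\alpha,r}'$ relative to $G_{\alpha,r}$, again with a factor $\gamma_r$) — and noting that differentiating the $\xi$-derivatives of $F_{\alpha,r}^{-1}$ with respect to $\alpha$ is handled because $(F_{\alpha,r}^{-1})' = \pm G_{\alpha,r}$ so $\partial_\alpha (F_{\alpha,r}^{-1})' = \pm \partial_\alpha G_{\alpha,r}$ — I get $\|\partial_\alpha u_r\|_{C^{i-1}} \leq C(K_0)\, \gamma_r\, \|G_{\alpha,r}\|_\infty \|h\|_{C^i}$. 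Now \ref{A7} in the stronger form $\sum_r \|G_{\alpha,r}\|_\infty \gamma_r \leq K_0$ gives summability, so $\sum_r \partial_\alpha u_r$ converges in $C^{i-1}(I)$ uniformly in $\alpha$, defining $Q_\alpha h$ with $\|Q_\alpha h\|_{C^{i-1}} \leq K_6 \|h\|_{C^i}$, and $\alpha \mapsto Q_\alpha h$ is continuous into $C^{i-1}$ by the same argument as for $P_\alpha$. Part (c) — that $\cQ_h$ is the derivative of $\cP_h$ as a map into $C^{i-1}(I)$ — then follows from the standard theorem on term-by-term differentiation of a series of functions: $\cP_h$ is a $C^{i-1}(I)$-valued series whose termwise $\alpha$-derivatives form a uniformly convergent series, hence the sum is differentiable with the expected derivative.

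Finally, part (d), $\int Q_\alpha h\, dm = 0$, is the cleanest: since $\int P_\alpha h\, dm = \int h\, dm$ for all $\alpha$ (the defining property of the transfer operator, with $v \equiv 1$), the function $\alpha \mapsto \int P_\alpha h\, dm$ is constant; differentiating under the integral sign — justified because $\sum_r \partial_\alpha u_r$ converges uniformly, as just shown — gives $0 = \partial_\alpha \int P_\alpha h\, dm = \int \partial_\alpha(P_\alpha h)\, dm = \int Q_\alpha h\, dm$. I expect the main obstacle to be bookkeeping rather than conceptual: carefully tracking how many times each of \ref{A1}--\ref{A6} gets applied when expanding $\partial_\alpha$ and $(\cdot)'$ of the products $G_{\alpha,r}\cdot(h\circ F_{\alpha,r}^{-1})$ up to the relevant order, and verifying the constant collapses to exactly $K_6 = 4K_0(1+K_0)$ — in particular handling the cross terms where one differentiates $G_{\alpha,r}$ once in $\xi$ and the chain-rule factor $(F_{\alpha,r}^{-1})'$ once more, and making sure the $\gamma_r \geq 1$ normalization is used so that the weaker \ref{A7}-type bound $\sum_r \|G_{\alpha,r}\|_\infty \leq K_0$ needed in part (a) is actually implied.
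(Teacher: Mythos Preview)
Your proposal is correct and follows essentially the same approach as the paper: both define the termwise pieces $u_r(\alpha,\xi)=G_{\alpha,r}(\xi)\,h(F_{\alpha,r}^{-1}(\xi))$, bound their $C^i$ and $\partial_\alpha$-$C^{i-1}$ norms using \ref{A1}--\ref{A6}, and sum via \ref{A7} (using $\gamma_r\ge1$ for part~(a)). The only cosmetic difference is in part~(c): the paper writes the pointwise integral identity $(P_\beta h)^{(j)}(\xi)-(P_\alpha h)^{(j)}(\xi)=\int_\alpha^\beta (Q_t h)^{(j)}(\xi)\,dt$ and uses the $C^{i-1}$-continuity of $\cQ_h$ from part~(b), whereas you invoke term-by-term differentiation of the series; both are standard and equivalent here. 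Also, you need not worry about hitting $K_6$ \emph{exactly}---the paper obtains $K_0(4K_0+1)$ and $K_0(4+K_0)$ in parts~(a) and~(b), both of which are simply $\le 4K_0(1+K_0)=K_6$.
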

\begin{proof}
  We do the case $i=2$; the case $i=1$ is similar and simpler.
  \begin{enumerate}[label=\alph*), ref=\alph*]
    \item
      Let $p_{\alpha,r} = (h \circ F_{\alpha,r}^{-1}) G_{\alpha,r}$. Then
      \begin{align*}
        p_{\alpha,r}'
        & = \pm (h'\circ F_{\alpha,r}^{-1}) G_{\alpha,r}^2 
          + (h\circ F_{\alpha,r}^{-1}) \, G'_{\alpha,r}, 
          \quad \text{and}\\
        p_{\alpha,r}''
        & = (h''\circ F_{\alpha,r}^{-1}) G_{\alpha,r}^3
            \pm 3 (h'\circ F_{\alpha,r}^{-1}) G_{\alpha,r} G_{\alpha,r}'
            + (h\circ F_{\alpha,r}^{-1}) G_{\alpha,r}'',
      \end{align*}
      where the sign of $\pm$ depends only on $r$. 
      By assumptions \ref{A1}, \ref{A2} and \ref{A3},
      $\|p_{\alpha,r}\|_{C^2} \leq (4 K_0 + 1)  \|h\|_{C^2} \|G_{\alpha,r}\|_{\infty}$.
      
      Since $p_{\alpha,r}$, $p_{\alpha,r}'$ and $p_{\alpha,r}''$
      are jointly continuous in $\alpha$ and $\xi$,
      we obtain that the map $\alpha \mapsto p_{\alpha,r}$ from $[\alpha_-, \alpha_+]$
      to $C^2(I)$ is continuous.
      
      By assumption \ref{A7}, $\sum_r \|G_{\alpha,r}\|_\infty \leq K_0$,
      so the map
      $
        \alpha \mapsto P_\alpha h = \sum_r p_{\alpha,r}
      $
      is continuous from $[\alpha_-, \alpha_+]$ to $C^2$, and
      $\|P_\alpha h\|_{C^2} \leq K_0(4 K_0+1) \|h\|_{C^2}$.
    \item
      Let $q_{\alpha,r} = \partial_\alpha [(h \circ F_{\alpha,r}^{-1}) G_{\alpha,r}]$.
      We use the fact that
      $F_{\alpha,r}^{-1}$, as a function of $\alpha$ and $\xi$,
      has continuous partial derivatives up to second order, to compute
      \[
        q_{\alpha,r}' = \partial_\alpha \bigl[((h \circ F_{\alpha,r}^{-1})G_{\alpha,r})'\bigr]
        = \partial_\alpha \left[ \pm (h'\circ F_{\alpha,r}^{-1}) G_{\alpha,r}^2 
                              + (h\circ F_{\alpha,r}^{-1}) \, G'_{\alpha,r} \right].
      \]
      We use assumptions \ref{A1}, \ref{A2}, \ref{A4}, \ref{A5} and \ref{A6}
      to estimate
      \[
        \|q_{\alpha,r}\|_{C^{1}} \leq \|G_{\alpha,r}\|_\infty (4+K_{0}) \gamma_r \|h\|_{C^2}.
      \]
      Since $q_{\alpha,r}$ and $q_{\alpha,r}'$
      are jointly continuous in $\alpha$ and $\xi$,
      we obtain that the map $\alpha \mapsto q_{\alpha,r}$ from $[\alpha_-, \alpha_+]$
      to $C^{1}(I)$ is continuous.
      
      By assumption \ref{A7}, $\sum_r \gamma_r \|G_{\alpha,r}\|_\infty \leq K_0$,
      so the map
      $
        \alpha \mapsto Q_\alpha h = \sum_r q_{\alpha,r}
      $
      is continuous from $[\alpha_-, \alpha_+]$ to $C^{1}$, and
      $\|Q_\alpha h\|_{C^{1}} \leq K_0(4+K_0) \|h\|_{C^2}$.
    \item
      Note that $(Q_\alpha h)(\xi)$ and
      $(Q_\alpha h)'(\xi)$ are jointly continuous in 
      $\alpha$ and $\xi$. By definition of $Q_\alpha$, 
      for every $\xi$ and $j=0,1$ we can write
      \begin{align*}
        (P_\beta h)^{(j)}(\xi) - (P_\alpha h)^{(j)}(\xi)
        &= \int_\alpha^\beta (Q_t h)^{(j)}(\xi) \, dt \\
        &= (\beta - \alpha) (Q_\alpha h)^{(j)} (\xi) 
          + \int_\alpha^\beta \bigl[(Q_t h)^{(j)}(\xi) - (Q_\alpha h)^{(j)} (\xi) \bigr] \, dt.
      \end{align*}
      Fix $\alpha$. Since $\lim_{t \to \alpha} \|Q_t h - Q_\alpha h\|_{C^{1}} = 0$, 
      the integral on the right is
      $o(\beta - \alpha)$ uniformly in $\xi$. Therefore,
      \[
        \| P_\beta h - P_\alpha h - (\beta - \alpha) Q_\alpha h \|_{C^{1}} 
        = o(\beta - \alpha),
      \]
      thus $\cQ_h \colon [\alpha_-, \alpha_+] \to C^{1}(I)$
      is the derivative of $\cP_h [\alpha_-, \alpha_+] \to C^{1}(I)$.
      
    \item
      To prove that $\int Q_\alpha h \, dm = 0$, we differentiate the identity
      $\int P_\alpha h \, dm = \int h \, dm$ by $\alpha$:
      \[
        \int Q_\alpha h \, dm
        = \frac{d}{d \alpha} \int P_\alpha h \, dm
        = 0,
      \]
      the order of differentiation and integration can be changed because both
      $P_\alpha h$ and $ \partial_\alpha (P_\alpha h) = Q_\alpha h$ are jointly 
      continuous in $\alpha$ and $\xi$.
  \end{enumerate}
\end{proof}

\subsection{Distortion bounds and coupling}
\label{subsection:distortionsandcoupling}
If $h \in C^1$ and $h$ is positive, denote $\| h \|_{L} = \| h'/h \|_\infty$.
If also $h \in C^2$, denote $\|h\|_P= \|h''/h\|_\infty$.

\begin{lemma}[Distortion bounds]
  \label{lemma:lyineq}
  If $h \in C^1$ and $h>0$, then
  \begin{equation}
    \label{eq:distortions1}
  \|P_\alpha h\|_L \leq  \sigma \|h\|_L + K_0.
  \end{equation}
  If also $h \in C^2$, then
  \begin{equation}
    \label{eq:distortions2}
    \|P_\alpha h\|_P \leq \, \sigma^2 \|h\|_P  + 3 \sigma K_0 \|h\|_L 
    + K_0.
  \end{equation}
\end{lemma}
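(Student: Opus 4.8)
The plan is to prove both inequalities branch by branch, exploiting positivity. Write $p_{\alpha,r} = (h\circ F_{\alpha,r}^{-1})\,G_{\alpha,r}$, so that $P_\alpha h = \sum_r p_{\alpha,r}$. Since $h>0$ and $G_{\alpha,r}>0$, every $p_{\alpha,r}$ is strictly positive and so is $P_\alpha h$. The key observation is that if I can bound $|p_{\alpha,r}'|$ (respectively $|p_{\alpha,r}''|$) pointwise by a single constant $C$ times $p_{\alpha,r}$, uniformly in $r$, then summing over $r$ gives $|(P_\alpha h)'| \le C\,P_\alpha h$ (resp. for the second derivative), whence $\|(P_\alpha h)'/(P_\alpha h)\|_\infty \le C$. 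Termwise differentiation of $\sum_r p_{\alpha,r}$ is legitimate by the uniform convergence already established in the proof of Lemma~\ref{lemma:diffalpha1}(a), using \ref{A1}--\ref{A3} and \ref{A7}.

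For \eqref{eq:distortions1}, take the expression for $p_{\alpha,r}'$ computed in the proof of Lemma~\ref{lemma:diffalpha1} and divide by $p_{\alpha,r}$:
\[
  \frac{p_{\alpha,r}'}{p_{\alpha,r}}
  = \pm \frac{h'\circ F_{\alpha,r}^{-1}}{h\circ F_{\alpha,r}^{-1}}\,G_{\alpha,r}
    + \frac{G_{\alpha,r}'}{G_{\alpha,r}}.
\]
By \ref{A1} the first summand has absolute value at most $\|h\|_L\,\|G_{\alpha,r}\|_\infty \le \sigma\|h\|_L$, and by \ref{A2} the second is at most $K_0$ in absolute value; the branch-dependent sign $\pm$ disappears under $|\cdot|$. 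Hence $|p_{\alpha,r}'|\le(\sigma\|h\|_L+K_0)\,p_{\alpha,r}$ pointwise, and summing over $r$ and dividing by $P_\alpha h>0$ yields \eqref{eq:distortions1}.

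For \eqref{eq:distortions2}, repeat with the formula for $p_{\alpha,r}''$:
\[
  \frac{p_{\alpha,r}''}{p_{\alpha,r}}
  = \frac{h''\circ F_{\alpha,r}^{-1}}{h\circ F_{\alpha,r}^{-1}}\,G_{\alpha,r}^2
    \pm 3\,\frac{h'\circ F_{\alpha,r}^{-1}}{h\circ F_{\alpha,r}^{-1}}\,G_{\alpha,r}'
    + \frac{G_{\alpha,r}''}{G_{\alpha,r}}.
\]
The first term is bounded by $\|h\|_P\,\|G_{\alpha,r}\|_\infty^2 \le \sigma^2\|h\|_P$ using \ref{A1}; for the middle term write $G_{\alpha,r}' = (G_{\alpha,r}'/G_{\alpha,r})\,G_{\alpha,r}$ and use \ref{A2} together with $\|G_{\alpha,r}\|_\infty\le\sigma$ to get the bound $3\sigma K_0\|h\|_L$; the last term is bounded by $K_0$ via \ref{A3}. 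Thus $|p_{\alpha,r}''|\le(\sigma^2\|h\|_P+3\sigma K_0\|h\|_L+K_0)\,p_{\alpha,r}$, and summing over $r$ and dividing by $P_\alpha h$ gives \eqref{eq:distortions2}.

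There is no substantial obstacle: this is the standard Lasota--Yorke type computation, and the only points deserving care are the justification of termwise differentiation of the series (inherited from Lemma~\ref{lemma:diffalpha1}) and the bookkeeping ensuring that the sign $\pm$, which depends only on $r$, never survives once absolute values are taken, so it plays no role in any of the estimates.
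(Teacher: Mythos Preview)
Your proof is correct and follows essentially the same route as the paper's. The only cosmetic difference is that the paper packages the positivity argument via the inequality $\bigl|\sum_r a_r\big/\sum_r b_r\bigr|\le\max_r|a_r/b_r|$ for $b_r>0$, whereas you bound $|p_{\alpha,r}'|\le C\,p_{\alpha,r}$ first and then sum; these are two phrasings of the same step, and the branchwise estimates using \ref{A1}--\ref{A3} are identical.
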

\begin{proof}
Recall that 
$P_\alpha h = \sum_r  (h \circ F_{\alpha,r}^{-1}) G_{\alpha,r}$
and $(F_{\alpha,r}^{-1})' = \pm G_{\alpha,r}$, where the sign depends only on $r$.
Inequality (\ref{eq:distortions1}) follows from the following computation:
\begin{equation*}
  \begin{aligned}
  \left|\frac{(P_\alpha h)'}{P_\alpha h}\right| 
  &= \left| \frac{\sum_r \pm (h'\circ F_{\alpha,r}^{-1}) G_{\alpha,r}^2 
                          + (h\circ F_{\alpha,r}^{-1}) \, G'_{\alpha,r}}
                {\sum_r (h\circ F_{\alpha,r}^{-1}) G_{\alpha,r}}
    \right| \\
  &\leq \max_r \left| \frac{\pm (h'\circ F_{\alpha,r}^{-1}) G_{\alpha,r}^2 
                          + (h\circ F_{\alpha,r}^{-1}) \, G'_{\alpha,r}}
                          {(h\circ F_{\alpha,r}^{-1}) G_{\alpha,r}}
    \right| 
  \leq \max_r\left(
  \frac{|h'\circ F_{\alpha,r}^{-1}|}{h\circ F_{\alpha,r}^{-1}} G_{\alpha,r}
  + \frac{|G_{\alpha,r}'|}{G_{\alpha,r}}
  \right) \\
  &\leq \max_r \left( \|h\|_L \|G_{\alpha,r}\|_\infty + \|G_{\alpha,r}\|_L \right)
  \end{aligned}
\end{equation*}
and assumptions \ref{A1} and \ref{A2}.

Next,
\begin{equation*}
  \label{eq:predistortions2}
  (P_\alpha h)'' = \sum_r \left[
    (h''\circ F_{\alpha,r}^{-1}) G_{\alpha,r}^3
    \pm 3 (h'\circ F_{\alpha,r}^{-1}) G_{\alpha,r} G_{\alpha,r}'
    + (h\circ F_{\alpha,r}^{-1}) G_{\alpha,r}''
    \right].
\end{equation*}
Thus
\begin{equation*}
  \begin{aligned}
  \left|\frac{(P_\alpha h)''}{P_\alpha h} \right| 
  &\leq \max_r
    \frac{ \left| (h''\circ F_{\alpha,r}^{-1}) G_{\alpha,r}^3
      \pm 3 (h'\circ F_{\alpha,r}^{-1}) G_{\alpha,r} G_{\alpha,r}'
      + (h\circ F_{\alpha,r}^{-1}) G_{\alpha,r}'' \right|
         }{(h\circ F_{\alpha,r}^{-1}) G_{\alpha,r}} \\
  &\leq \max_r \left(
    \frac{|h''\circ F_{\alpha,r}^{-1}|}{h\circ F_{\alpha,r}^{-1}}
    G_{\alpha,r}^2
    + 3 \frac{|h'\circ F_{\alpha,r}^{-1}|}{h\circ F_{\alpha,r}^{-1}}
    \frac{|G_{\alpha,r}'|}{G_{\alpha,r}} G_{\alpha,r}
    +  \frac{|G_{\alpha,r}''|}{G_{\alpha,r}}
  \right) \\
  &\leq \max_r \left(
    \|h\|_P \, \|G_{\alpha,r}\|_\infty^2 + 3 \|h\|_L\, \|G_{\alpha,r}\|_L\, \|G_{\alpha,r}\|_\infty + 
    \|G_{\alpha,r}\|_P
  \right).
  \end{aligned}
\end{equation*}
The inequality (\ref{eq:distortions2}) follows from the above and assumptions
\ref{A1}, \ref{A2} and \ref{A3}.
\end{proof}

Let $K_L>0$, $K_P>0$ and $\theta\in (0,1)$ be constants satisfying
\begin{equation}
  \label{eq:KLKP}
  \begin{aligned}
  K_L(1-\theta \, e^{|I| K_L}) & > \sigma \, K_L + K_0 , \quad \text{and} \\
  K_P(1-\theta \, e^{|I| K_L}) & > \sigma^2 \, K_P  + 3 \sigma K_0 \, K_L 
  + K_0,
  \end{aligned}
\end{equation}
where $|I|$ means the length of the interval $I$.
It is clear that such constants can be chosen, because
$\sigma < 1$.

\begin{definition}
  We say that a function $h$ is \emph{regular}, if it is positive,
  belongs in $C^1(I)$, and
  $\|h\|_L \leq K_L$. If in addition $h\in C^2(I)$ and
  $\|h\|_P \leq K_P$, we say that $h$ is \emph{superregular}.
\end{definition}

\begin{remark}
It readily follows from Lemma \ref{lemma:lyineq}
that if $h$ is a regular function, then so is $P_\alpha h$. If $h$ is
superregular, then so is $P_\alpha h$. 
\end{remark}

\begin{remark}
We observe that regular functions are explicitly bounded from above and from below.
If $h$ is regular, so $\|h'/h\|_\infty < K_L$, then
$h(x_1)/h(x_2)\le e^{|I| K_L}$ for all 
$x_1,x_2$. Also, there is $\hat{x} \in I$ such that
$h(\hat{x}) = \int h \, dm$, hence
\begin{equation}
  \label{eq:regbounds}
   \SMALL e^{-|I| K_L} \int h \, dm \leq h(x) \leq e^{|I| K_L} \int h \, dm 
   \qquad \text {for all } x \in I.
\end{equation}
\end{remark}

\begin{lemma}
  Assume that h is regular. Let 
  $g = P_\alpha h - \theta \, \int h \, dm$. Then
  $g$ is regular. If $h$ is superregular, then so is $g$.
\end{lemma}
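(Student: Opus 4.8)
The plan is to lean on two facts already in hand. By the remark following Lemma~\ref{lemma:lyineq}, $P_\alpha h$ is regular when $h$ is, and superregular when $h$ is; and $P_\alpha$ preserves the integral, so $\int P_\alpha h\,dm = \int h\,dm$ and the two-sided pointwise bounds \eqref{eq:regbounds} apply to $P_\alpha h$. Since $g$ differs from $P_\alpha h$ by a constant, $g' = (P_\alpha h)'$ and $g'' = (P_\alpha h)''$, so the whole lemma reduces to comparing $g$ with $P_\alpha h$ pointwise and then dividing the distortion bounds of Lemma~\ref{lemma:lyineq} by that comparison.

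First I would prove positivity of $g$ together with the clean lower bound
\[
  g(x) \ \ge\ \left(1 - \theta\, e^{|I| K_L}\right) (P_\alpha h)(x).
\]
Note that \eqref{eq:KLKP} forces $1 - \theta\, e^{|I|K_L} > 0$, since the left-hand side of each inequality there must be positive while $K_L > 0$. To get the display, apply \eqref{eq:regbounds} to the regular function $P_\alpha h$: this gives $\int h\,dm = \int P_\alpha h\,dm \le e^{|I| K_L}(P_\alpha h)(x)$, hence $\theta\int h\,dm \le \theta\, e^{|I|K_L}(P_\alpha h)(x)$, and subtracting from $(P_\alpha h)(x)$ yields the bound; it is strictly positive because $\int h\,dm > 0$ ($h$ is positive and continuous on a compact interval). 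In particular $g \in C^1(I)$, and $g \in C^2(I)$ if $h \in C^2(I)$, so $g$ lies in the relevant function class.

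Next I would bound the logarithmic derivatives. Since $P_\alpha h > 0$, we have $|(P_\alpha h)'| \le (P_\alpha h)\,\|P_\alpha h\|_L$, and Lemma~\ref{lemma:lyineq} with $\|h\|_L \le K_L$ gives $\|P_\alpha h\|_L \le \sigma K_L + K_0$. Dividing by the lower bound for $g$,
\[
  \left\| \frac{g'}{g} \right\|_\infty
  = \left\| \frac{(P_\alpha h)'}{g} \right\|_\infty
  \le \frac{\sigma K_L + K_0}{\,1 - \theta\, e^{|I| K_L}\,}
  < K_L,
\]
the last inequality being precisely the first line of \eqref{eq:KLKP}; hence $g$ is regular. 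For superregularity I would repeat the argument one derivative up: $|(P_\alpha h)''| \le (P_\alpha h)\,\|P_\alpha h\|_P$, and Lemma~\ref{lemma:lyineq} with $\|h\|_L \le K_L$, $\|h\|_P \le K_P$ gives $\|P_\alpha h\|_P \le \sigma^2 K_P + 3\sigma K_0 K_L + K_0$; dividing by the same lower bound for $g$ and invoking the second line of \eqref{eq:KLKP} yields $\|g\|_P < K_P$.

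I do not expect a genuine obstacle: the constants in \eqref{eq:KLKP} were evidently designed so that this lemma works. The only point needing care is to phrase the lower bound on $g$ as a multiple of $(P_\alpha h)(x)$ rather than of the bare constant $\int h\,dm$ --- comparing $\theta\int h\,dm$ to $(P_\alpha h)(x)$ pointwise via \eqref{eq:regbounds} --- since that is exactly what makes $g'/g$ and $g''/g$ telescope against the distortion estimates. It is also worth recording explicitly that \emph{regular} requires only $C^1$ regularity (and \emph{superregular} additionally $C^2$), so subtracting a constant from $P_\alpha h$ does not disturb membership in the relevant class.
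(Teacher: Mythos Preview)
Your proof is correct and follows essentially the same route as the paper: establish the pointwise lower bound $g \ge (1-\theta e^{|I|K_L})\,P_\alpha h$ via \eqref{eq:regbounds}, then divide the distortion estimates of Lemma~\ref{lemma:lyineq} for $P_\alpha h$ by this factor and invoke \eqref{eq:KLKP}. You are simply more explicit than the paper about why $1-\theta e^{|I|K_L}>0$ and about $g$ retaining the required smoothness.
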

\begin{proof}
  Since $P_\alpha h \geq e^{-|I| K_L} \int P_\alpha h \, dm = e^{-|I| K_L} \int h \, dm$
  by equation (\ref{eq:regbounds}),
  \[
    g = P_\alpha h\left(1-\frac{\theta \, \int h \, dm}{P_\alpha h}\right)
    \geq P_\alpha h \left( 1 - \theta e^{|I| K_L} \right).
  \]
  Thus $g > 0$. By Lemma \ref{lemma:lyineq} and equation (\ref{eq:KLKP})
  \[
    \|g\|_L 
    = \left\|\frac{g'}{g}\right\|_\infty 
    = \left\|\frac{(P_\alpha h)'}{g}\right\|_\infty 
    \leq 
    \left\|\frac{(P_\alpha h)'}{P_\alpha h}\right\|_\infty \frac{1}{1-\theta e^{|I| K_L}}
    = \|P_\alpha h\|_L \frac{1}{1-\theta e^{|I| K_L}}
    \leq K_L.
  \]
  Hence $g$ is regular.
  An analogous proof works for $\|g\|_P$.
\end{proof}

\begin{lemma}[Coupling Lemma]
  \label{lemma:coupling}
  Let $f$ and $g$ be two regular functions with $\int f \, dm = \int g \, dm = M$.
  Let $f_0 = f$ and $g_0 = g$, and define
  \[ \SMALL
    f_{n+1}=P_\alpha f_n - \theta \int f_n \, dm, \qquad 
    g_{n+1}=P_\alpha g_n - \theta \int g_n \, dm.
  \]
  Then for all $n$
  \[
    P_\alpha^n(f-g) = f_n - g_n,
  \]
  where $f_n$ and $g_n$ are regular, and 
  $\int f_n \, dm = \int g_n \, dm = (1-\theta)^n M$.
  
  In particular, $\|f_n\|_\infty, \|g_n\|_\infty \leq
  (1-\theta)^n e^{|I| K_L} M$, and
  \[
    \|f_n' \|_\infty,\|g_n' \|_\infty \leq K_L (1-\theta)^n e^{|I| K_L} M.
  \]
  If in addition $f$ and $g$ are superregular, then
  \[
    \|f_n''\|_\infty,\|g_n''\|_\infty \leq K_P (1-\theta)^n e^{|I| K_L} M.
  \]
\end{lemma}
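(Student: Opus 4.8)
The plan is to proceed by induction on $n$, using the two auxiliary lemmas just established. For the identity $P_\alpha^n(f-g) = f_n - g_n$, note that by linearity of $P_\alpha$,
\[
  f_{n+1} - g_{n+1} = P_\alpha f_n - P_\alpha g_n - \theta\Bigl(\int f_n\,dm - \int g_n\,dm\Bigr) = P_\alpha(f_n - g_n) + 0,
\]
where the last term vanishes once we know $\int f_n\,dm = \int g_n\,dm$; combined with the inductive hypothesis $f_n - g_n = P_\alpha^n(f-g)$ this gives $f_{n+1} - g_{n+1} = P_\alpha^{n+1}(f-g)$. So the first task is to track the integrals. Since $P_\alpha$ preserves $\int \cdot\,dm$, we get $\int f_{n+1}\,dm = \int f_n\,dm - \theta\int f_n\,dm = (1-\theta)\int f_n\,dm$, and by induction $\int f_n\,dm = (1-\theta)^n M$, and likewise for $g_n$; in particular the two integrals stay equal at every step, which is exactly what was needed above.

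Next I would handle regularity. The preceding lemma says precisely that if $h$ is regular then $P_\alpha h - \theta\int h\,dm$ is regular, and superregular is preserved as well. Applying this with $h = f_n$ (which is regular by the inductive hypothesis) shows $f_{n+1}$ is regular, and similarly for $g_n$; the base case $n=0$ is the hypothesis on $f,g$. This closes the induction on all the structural claims.

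Finally, the explicit bounds follow from the regularity of $f_n$ together with the two-sided pointwise estimate \eqref{eq:regbounds}: since $f_n$ is regular with $\int f_n\,dm = (1-\theta)^n M$, we get $\|f_n\|_\infty \le e^{|I|K_L}(1-\theta)^n M$, and then $\|f_n'\|_\infty = \|(f_n'/f_n)\, f_n\|_\infty \le \|f_n\|_L \|f_n\|_\infty \le K_L e^{|I|K_L}(1-\theta)^n M$. For the second-derivative bound, when $f$ (hence every $f_n$) is superregular we have $\|f_n\|_P \le K_P$, so $\|f_n''\|_\infty = \|(f_n''/f_n)\,f_n\|_\infty \le K_P e^{|I|K_L}(1-\theta)^n M$; the same bounds hold for $g_n$ verbatim. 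I do not expect any genuine obstacle here: the lemma is essentially a bookkeeping assembly of the distortion bounds (Lemma~\ref{lemma:lyineq}) and the regularity-preservation lemma, with the only point requiring a moment's care being the verification that the means of $f_n$ and $g_n$ remain equal, which is what makes the $\theta$-correction terms cancel in $f_{n+1}-g_{n+1}$.
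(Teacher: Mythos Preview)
Your proposal is correct and follows essentially the same approach as the paper: induction for the integral identity, the preceding lemma for preservation of (super)regularity, and the pointwise bound \eqref{eq:regbounds} combined with the definitions of $\|\cdot\|_L$ and $\|\cdot\|_P$ for the explicit estimates. The paper's own proof is considerably terser and leaves the verification of $P_\alpha^n(f-g)=f_n-g_n$ and the regularity induction implicit, but the content is the same.
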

\begin{proof}
  The proof of $\int f_n \, dm = \int g_n \, dm = (1-\theta)^n M$ is by induction.
  
  By equation (\ref{eq:regbounds}), $\|f\|_\infty$ and $\|g\|_\infty$ are bounded by
  $(1-\theta)^n e^{|I| K_L} M$. Note that if $h$ is a regular function, then
  $\|h' \|_\infty \leq K_L \|h\|_\infty$, and if it is superregular, then also
  $\|h''\|_\infty \leq K_P \|h\|_\infty$.
  The bounds on 
  $\|f'\|_\infty$, $\|g'\|_\infty$, $\|f''\|_\infty$, $\|g''\|_\infty$ follow.
\end{proof}

\begin{corollary}
  \label{cor:couplc}
  There is a constant $K_{5}$ such that if $h \in C^i (I)$ 
  for $i=1 \text{ or } 2$, and $h$ has mean zero, then 
  \[
    \|P_\alpha^n h\|_{C^i} \leq K_{5} (1-\theta)^n \, \|h\|_{C^i}.
  \]
\end{corollary}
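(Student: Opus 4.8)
The plan is to reduce the $C^i$ statement to the two concrete estimates supplied by the Coupling Lemma (Lemma~\ref{lemma:coupling}), after first splitting off the mean-zero hypothesis into a difference of two regular (resp.\ superregular) functions. Given $h \in C^i(I)$ with $\int h\,dm = 0$, I would pick a fixed reference superregular function $h_0$ with $\int h_0\,dm = 1$ --- for instance a suitable constant, which is superregular since its logarithmic and ``$P$'' seminorms vanish and $K_L, K_P > 0$. Then for a scalar $c > 0$ to be chosen, write $ch + h_0$ and $h_0$; both should be arranged to be regular (and superregular when $i=2$). Since $\int(ch+h_0)\,dm = \int h_0\,dm = 1$, the Coupling Lemma applies with $M=1$ to $f = ch+h_0$, $g = h_0$, giving $P_\alpha^n(ch) = P_\alpha^n(f-g) = f_n - g_n$ with the stated uniform bounds on $\|f_n\|_\infty, \|f_n'\|_\infty$ (and $\|f_n''\|_\infty$), and likewise for $g_n$. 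Dividing by $c$ and using the triangle inequality yields $\|P_\alpha^n h\|_{C^i} \leq (2/c)\max(1,K_L,K_P) e^{|I|K_L}(1-\theta)^n$.

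The one subtlety is choosing $c$ so that $ch + h_0$ is regular (resp.\ superregular) with a bound \emph{independent of $h$ up to the factor $\|h\|_{C^i}$}. The clean way is to normalize: if $\|h\|_{C^i} = 0$ then $h \equiv 0$ and there is nothing to prove, so assume $\|h\|_{C^i} > 0$ and set $c = \varepsilon / \|h\|_{C^i}$ for a small absolute constant $\varepsilon > 0$. Then $\|ch\|_{C^i} = \varepsilon$, so $ch + h_0$ is $C^i$-close to $h_0$; since $h_0$ is bounded below by a positive constant (namely $h_0 \geq e^{-|I|K_L}$ by~\eqref{eq:regbounds}, or simply by its constant value), for $\varepsilon$ small enough $ch+h_0$ stays positive, and its seminorms $\|\cdot\|_L$, $\|\cdot\|_P$ --- being continuous functions of the $C^i$ data that vanish perturbation-wise around $h_0$ where they equal $0 < K_L$ and $0 < K_P$ --- remain below $K_L$ and $K_P$. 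Crucially $\varepsilon$ depends only on $|I|$, $K_L$, $K_P$ (hence ultimately only on $K_0$, $\sigma$), not on $h$ or $\alpha$. With this choice,
\[
  \|P_\alpha^n h\|_{C^i}
  = \tfrac{1}{c}\,\|f_n - g_n\|_{C^i}
  \leq \tfrac{\|h\|_{C^i}}{\varepsilon}\,\bigl(\|f_n\|_{C^i} + \|g_n\|_{C^i}\bigr)
  \leq \tfrac{2}{\varepsilon}\max(1,K_L,K_P)\,e^{|I|K_L}\,(1-\theta)^n\,\|h\|_{C^i},
\]
so $K_{5} = 2\varepsilon^{-1}\max(1,K_L,K_P)e^{|I|K_L}$ works for both $i=1$ and $i=2$ simultaneously.

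The main obstacle, such as it is, is purely bookkeeping: verifying that the perturbation $ch + h_0$ genuinely lands in the regular/superregular cone with the \emph{strict} inequalities needed, uniformly in $\alpha$. This is where it matters that~\eqref{eq:KLKP} was chosen with strict inequalities and that $h_0$ can be taken with $\|h_0\|_L = \|h_0\|_P = 0$, leaving room. I would spell out the elementary estimate $\|(ch+h_0)'/(ch+h_0)\|_\infty \leq (\|h_0'\|_\infty + c\|h'\|_\infty)/(\inf h_0 - c\|h\|_\infty) = (c\|h'\|_\infty)/(\inf h_0 - c\|h\|_\infty)$, which is $\leq K_L$ once $c\|h\|_{C^1} = \varepsilon$ is small relative to $\inf h_0$ and $K_L$; the $C^2$ version is analogous with one more term. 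Everything else is a direct quotation of Lemma~\ref{lemma:coupling}. No step requires any idea beyond the coupling construction already in hand, so the corollary follows immediately.
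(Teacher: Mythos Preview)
Your proposal is correct and is essentially the paper's own argument: write the mean-zero $h$ as a difference of two (super)regular functions and invoke the Coupling Lemma. The only cosmetic difference is normalization --- the paper adds a large constant $c = \|h\|_{C^i}(1+\max(K_L^{-1},K_P^{-1}))$ directly to $h$ and applies Lemma~\ref{lemma:coupling} with $M=c$, whereas you scale $h$ down by $c=\varepsilon/\|h\|_{C^i}$ and add the constant $1$, applying the lemma with $M=1$; these are the same decomposition up to an overall scalar, and the paper's explicit choice of $c$ is precisely what makes your $\varepsilon$ concrete.
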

\begin{proof}
  We can represent $h = (h+c) - c$, where 
  $c= \|h\|_{C^i} (1 + \max(K_L^{-1}, K_P^{-1}))$. 
  Then 
  \[
    \left\|\frac{h'}{h+c}\right\|_\infty \leq \frac{\|h\|_{C^i}}{-\|h\|_{C^i} + c}
    = \frac{1}{\max (K_L^{-1}, K_P^{-1})}
    = \min(K_L, K_P),
  \]
  and so $h+c$ is regular. If $i=2$, then the same identity with $h''$ in 
  place of $h'$ also holds true, so also $h+c$ is superregular.
  
  By Lemma \ref{lemma:coupling} applied to $f= h+c$ and $g=c$,
  \[
    P_\alpha^n h = f_n - g_n,
  \]
  where 
  \begin{align*} 
    \|f_n\|_{C^i}, \|g_n\|_{C^i} 
    &\leq \max(1, K_L, K_P) (1-\theta)^n e^{|I| K_L} c \\
    &= (1-\theta)^n \left[ \max(1, K_L, K_P)  e^{|I| K_L} (1 + \max(K_L^{-1}, K_P^{-1})) \right] \|h\|_{C^i} \\
    &= (1-\theta)^n \frac{K_{5}}{2} \|h\|_{C^i}.
  \end{align*}
  Thus $\|P_\alpha^n h\|_{C^i} \leq (1-\theta)^n K_{5} \|h\|_{C^i}$, where
  \[
    K_{5} = 2 \max(1, K_L, K_P) (1 + \max(K_L^{-1}, K_P^{-1})) \, e^{|I| K_L}.
  \]
\end{proof}

\begin{corollary}
  \label{cor:coupla} 
  For any $h \in C^2 (I)$ with $\int h \, dm = 1$
  \[
    h_\alpha = \lim_{n \to \infty} P_\alpha^n h
    = h + \sum_{n=0}^\infty P_\alpha^n(P_\alpha h-h).
  \]
  The series converges exponentially fast in $C^2$.
  The $C^2$ norm of $h_\alpha$ is bounded by 
  $K_{1} = 1 + 2 \theta^{-1} e^{|I| K_L} \max(1, K_L, K_P)$.
\end{corollary}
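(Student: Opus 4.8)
The plan is to establish the formula in two moves: first identify the partial sums of the series with iterates of the transfer operator, and then show that the iterates converge to the invariant density $h_\alpha$. For the telescoping identity, I would write, for each $N\geq 1$,
\[
  h + \sum_{n=0}^{N-1} P_\alpha^n(P_\alpha h - h)
  = h + \sum_{n=0}^{N-1}\bigl(P_\alpha^{n+1} h - P_\alpha^n h\bigr)
  = P_\alpha^N h,
\]
which is immediate since the inner sum telescopes. So it suffices to prove that $P_\alpha^N h \to h_\alpha$ in $C^2$ at an exponential rate, where $h_\alpha$ is the unique invariant density guaranteed by the remarks following assumptions \ref{A1}--\ref{A2}.

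For the convergence, the idea is to apply Corollary \ref{cor:couplc} to the mean-zero function $P_\alpha h - h$. Since $h$ is the density of a probability measure in the sense that $\int h\,dm = 1$, and $\int P_\alpha h \, dm = \int h \, dm = 1$, the difference $P_\alpha h - h$ has zero mean and lies in $C^2(I)$ (here $h \in C^2$ by hypothesis, and $P_\alpha h \in C^2$ by part (a) of Lemma \ref{lemma:diffalpha1}, or directly by Lemma \ref{lemma:lyineq}). Hence $\|P_\alpha^n(P_\alpha h - h)\|_{C^2} \leq K_{5}(1-\theta)^n \|P_\alpha h - h\|_{C^2}$, so the series $\sum_{n\geq 0} P_\alpha^n(P_\alpha h - h)$ converges absolutely and exponentially fast in $C^2(I)$; call its sum $S$. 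Then $P_\alpha^N h = h + \sum_{n=0}^{N-1} P_\alpha^n(P_\alpha h - h) \to h + S$ in $C^2$. Applying $P_\alpha$ (which is continuous on $C^2$ by Lemma \ref{lemma:diffalpha1}(a)) and using $\int(h+S)\,dm = 1$ together with uniqueness of the absolutely continuous invariant probability density, we get $h + S = h_\alpha$, which gives the displayed formula.

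Finally, for the norm bound, I would not estimate $\|P_\alpha h - h\|_{C^2}$ crudely but instead route through the regular/superregular machinery: write $h = f - g$ with $f = h + c$, $g = c$ superregular as in the proof of Corollary \ref{cor:couplc}, and apply the Coupling Lemma directly to bound $\|P_\alpha^n h\|_{C^2}$ uniformly — actually, cleaner is to note that $h_\alpha = \lim P_\alpha^n h$ and each $P_\alpha^n(P_\alpha h - h)$ has the same structure, so summing the bound $\|f_n\|_{C^2}, \|g_n\|_{C^2} \leq \max(1,K_L,K_P)(1-\theta)^n e^{|I|K_L} c$ over $n\geq 0$ with $c$ now arising from the mean-zero splitting of $P_\alpha h - h$ and $\|P_\alpha h - h\|_\infty$ controlled via the regularity bounds \eqref{eq:regbounds}, yields the geometric series $\sum_n (1-\theta)^n = \theta^{-1}$ and hence the stated constant $K_{1} = 1 + 2\theta^{-1}e^{|I|K_L}\max(1,K_L,K_P)$. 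I expect the only mildly delicate point to be bookkeeping the constant so that it matches the stated $K_{1}$ exactly — in particular tracking the factor $2$ and the $\max(1,K_L,K_P)$ through the two applications of the coupling bound (once to $f_n$, once to $g_n$), and confirming that the $\|h\|_{C^2} = 1$ normalization (from $\int h\,dm = 1$ and regularity) is what feeds into $c$; the convergence and the telescoping are routine.
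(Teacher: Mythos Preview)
Your telescoping identity and convergence argument are correct and match the paper: the partial sums equal $P_\alpha^N h$, and applying Corollary~\ref{cor:couplc} to the mean-zero function $P_\alpha h - h$ gives exponential convergence in $C^2$; the limit is $P_\alpha$-invariant with integral $1$, hence equals $h_\alpha$.

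The gap is in your derivation of the explicit constant $K_1$. Your route via Corollary~\ref{cor:couplc} produces a bound proportional to $\|P_\alpha h - h\|_{C^2}$, which depends on the particular $h$ you started with, not just on $\sigma$ and $K_0$. Your attempt to close this by invoking a ``$\|h\|_{C^2}=1$ normalization from $\int h\,dm=1$ and regularity'' is incorrect: $\int h\,dm=1$ does not imply $\|h\|_{C^2}=1$, and a general $h\in C^2$ with unit integral need not be (super)regular at all, so the constant $c$ from the proof of Corollary~\ref{cor:couplc} cannot be taken equal to $1$.

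The paper's fix is simply to choose $h=1$. The constant function $1$ is superregular with $\int 1\,dm=1$, and so is $P_\alpha 1$; applying the Coupling Lemma~\ref{lemma:coupling} directly with $f=P_\alpha 1$, $g=1$, $M=1$ gives
\[
  \|P_\alpha^n(P_\alpha 1-1)\|_{C^2}\le 2(1-\theta)^n e^{|I|K_L}\max(1,K_L,K_P),
\]
and summing yields $\|h_\alpha\|_{C^2}\le 1+2\theta^{-1}e^{|I|K_L}\max(1,K_L,K_P)=K_1$. Since $h_\alpha$ does not depend on the starting $h$, this bound is the one you want; the general-$h$ statement is then obtained afterwards, exactly as you do, from $P_\alpha^n(h-h_\alpha)\to 0$.
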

\begin{proof}
  Let
  \[
    f = 1 + \sum_{n=0}^\infty P_\alpha^n(P_\alpha 1-1).
  \]
  Since $1$ is a superregular function, so is $P_\alpha 1$, and
  by Lemma \ref{lemma:coupling} applied to $f=P_\alpha 1$ and $g=1$,
  we have that 
  $\|P_\alpha^n(P_\alpha1-1)\|_{C^2} \le 2 (1-\theta)^n \, e^{|I| K_L} \max(1, K_L, K_P)$. 
  Thus the series above converges exponentially fast in $C^2 (I)$
  and $\|h\|_{C^2} \leq K_{1}$.
  
  Since $1 + \sum_{n=0}^N P_\alpha^n(P_\alpha 1-1) = P_\alpha^{N+1} 1$, 
  we have $f = \lim_{n \to \infty} P_\alpha^n 1$. Thus
  $f$ is invariant under $P_\alpha$. It is clear that $\int f \, dm = 1$.
  Thus $h_\alpha = f$.
  
  By Corollary \ref{cor:couplc}, the $C^2$ norm of $P_\alpha^n (h - h_\alpha) = (P_\alpha^n h) - h_\alpha$
  decreases exponentially with $n$, thus $h_\alpha = \lim_{n\to\infty} P_\alpha^n h
  = h + \sum_{n=0}^\infty P_\alpha^n(P_\alpha h-h)$.
\end{proof}

\begin{corollary}
  \label{cor:halphacunt}
  The map $\alpha \mapsto h_\alpha$ from 
  $[\alpha_-, \alpha_+]$ to $C^2(I)$ is continuous.
  
\end{corollary}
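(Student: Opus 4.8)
The plan is to show continuity of $\alpha \mapsto h_\alpha$ in $C^2(I)$ by exploiting the uniformly exponentially convergent series representation $h_\alpha = 1 + \sum_{n=0}^\infty P_\alpha^n(P_\alpha 1 - 1)$ from Corollary \ref{cor:coupla}. The strategy is the standard ``uniform tail + finite head'' argument: the tail of the series is small uniformly in $\alpha$ by the coupling estimate, so it suffices to prove continuity of each partial sum, which reduces to continuity of $\alpha \mapsto P_\alpha^n h$ in $C^2(I)$ for fixed $h \in C^2(I)$.

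First I would fix $\varepsilon > 0$. By Corollary \ref{cor:coupla} (more precisely, by the estimate $\|P_\alpha^n(P_\alpha 1 - 1)\|_{C^2} \leq 2(1-\theta)^n e^{|I| K_L}\max(1,K_L,K_P)$ established in its proof, which is uniform in $\alpha$), choose $N$ so large that $\sum_{n > N} \|P_\alpha^n(P_\alpha 1 - 1)\|_{C^2} < \varepsilon/3$ for every $\alpha \in [\alpha_-,\alpha_+]$. Then for any $\alpha, \beta$,
\[
  \|h_\beta - h_\alpha\|_{C^2}
  \leq \sum_{n=0}^{N} \|P_\beta^n(P_\beta 1 - 1) - P_\alpha^n(P_\alpha 1 - 1)\|_{C^2}
  + \frac{2\varepsilon}{3}.
\]
So it remains to show that each of the finitely many terms in the sum tends to $0$ as $\beta \to \alpha$, i.e. that $\beta \mapsto P_\beta^n(P_\beta 1 - 1)$ is continuous from $[\alpha_-,\alpha_+]$ to $C^2(I)$ for each fixed $n \leq N$.

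Second, I would establish that for each fixed $h \in C^2(I)$ the map $\alpha \mapsto P_\alpha^n h$ is continuous into $C^2(I)$, by induction on $n$. The base case $n=1$ is exactly Lemma \ref{lemma:diffalpha1}(a) with $i=2$. For the inductive step, write $P_\beta^{n}h - P_\alpha^{n}h = P_\beta(P_\beta^{n-1}h) - P_\beta(P_\alpha^{n-1}h) + P_\beta(P_\alpha^{n-1}h) - P_\alpha(P_\alpha^{n-1}h)$. The first difference is bounded in $C^2$ by $K_6\|P_\beta^{n-1}h - P_\alpha^{n-1}h\|_{C^2}$ using the uniform bound $\|P_\beta\|_{C^2 \to C^2} \leq K_6$ from Lemma \ref{lemma:diffalpha1}(a), which goes to $0$ by the inductive hypothesis; the second difference goes to $0$ by Lemma \ref{lemma:diffalpha1}(a) applied with the fixed function $g = P_\alpha^{n-1}h \in C^2(I)$. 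Applying this with $h = P_\alpha 1 - 1$ (note $P_\beta^n(P_\beta 1 - 1) = P_\beta^{n+1}1 - P_\beta^n 1$, so one can equally run the induction on $\alpha \mapsto P_\alpha^n 1$) completes the finite-head estimate, and combining with the uniform tail bound gives $\limsup_{\beta \to \alpha}\|h_\beta - h_\alpha\|_{C^2} \leq \varepsilon$; since $\varepsilon$ is arbitrary, continuity follows.

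I do not expect a serious obstacle here: the only mild subtlety is that one must interchange the two roles carefully in the inductive step — using uniform boundedness of $P_\beta$ to absorb the ``already-converging'' inner difference, and using the single-step continuity of Lemma \ref{lemma:diffalpha1} for a \emph{fixed} argument for the outer difference — rather than trying to prove a joint continuity statement directly. The uniformity in $\alpha$ of the tail bound, which is the real engine, has already been done inside Corollary \ref{cor:coupla}, so this corollary is essentially a packaging step.
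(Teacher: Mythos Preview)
Your proposal is correct and follows essentially the same approach as the paper: split the series $h_\alpha = 1 + \sum_n P_\alpha^n(P_\alpha 1 - 1)$ into a finite head and a tail that is small in $C^2$ uniformly in $\alpha$, then invoke Lemma~\ref{lemma:diffalpha1} for continuity of the head. The paper states in one line that $\alpha \mapsto P_\alpha^n(P_\alpha 1 - 1)$ is continuous into $C^2(I)$ ``by Lemma~\ref{lemma:diffalpha1}'', whereas you spell out the induction on $n$ (using the uniform bound $\|P_\beta\|_{C^2\to C^2}\le K_6$ together with the telescoping decomposition and the observation $P_\alpha^n(P_\alpha 1 - 1)=P_\alpha^{n+1}1-P_\alpha^n1$); this is exactly the detail the paper suppresses, not a different route.
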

\begin{proof}
  Using Corollary \ref{cor:coupla}, write for $N \in \bN$:
  \[
    h_\alpha = 1
    + \sum_{n=0}^{N-1} P_\alpha^n(P_\alpha 1-1)
    + \sum_{n=N}^{\infty} P_\alpha^n(P_\alpha 1-1).
  \]
  The $C^2$ norm of the second sum is exponentially small in $N$, 
  uniformly in $\alpha$.
  By Lemma~\ref{lemma:diffalpha1}, a map $\alpha \mapsto P_\alpha^n(P_\alpha h-h)$
  from $[\alpha_-, \alpha_+]$ to $C^2(I)$ is continuous for every $n$.
  Thus the first sum depends on $\alpha$ continuously. 
  Since the choice of $N$ is arbitrary, the result follows.
\end{proof}

\section{Proofs of Theorems \ref{th:lsv} and \ref{th:whatamIdoing}}
  \label{section:LSV}

In this section we prove technical statements about the family of maps 
$T_\alpha$, defined by equation (\ref{eq:LSVT}). 
We use notations introduced in Section \ref{section:LSVintro}. 

In Subsection \ref{section:technicallemmas} 
we introduce necessary notations and prove a number of technical lemmas, 
in Subsections \ref{subsection:proofofth:lsv} and 
\ref{subsection:proofofth:whatamIdoing} we use the accumulated knowledge
to prove Theorems~\ref{th:lsv} and \ref{th:whatamIdoing}.

\subsection{Technical Lemmas}
\label{section:technicallemmas}

We use notation $\const$ for various nonnegative constants, 
which only depend on $\alpha_-$ and $\alpha_+$,
and may change from line to line, and within one expression if used twice.
Recall the definition of \(y_r\) from the beginning of 
Section~\ref{section:LSVintro}.

It is clear that $T_\alpha(x)$, as a function of $\alpha$
and $x$, has continuous partial derivatives of all orders in
$\alpha,x \in [\alpha_-,\alpha_+] \times (0, 1/2]$, and so do
$F_{\alpha,r}(x)$ and $F_{\alpha,r}^{-1}(x)$ on
$[\alpha_-,\alpha_+] \times [y_{r+1}, y_r]$  and $[\alpha_-,\alpha_+] \times [1/2, 1]$ 
respectively.

Let $E_{\alpha}\colon [0, 1/2] \to [0,1]$, $E_\alpha x = T_\alpha x$ be the left branch
of the map $T_\alpha$. Note that $E_\alpha$ is invertible.
Let $z \in [0,1]$ and write,
for notational convenience,
$z_r = E_{\alpha}^{-r}(z)$. Then
$F_{\alpha,r} (z) = E_\alpha^r (T_\alpha (z)) = E_\alpha^r(2 z -1)$ 
for $z \in [y_{r+1},y_r]$, and
for $z \in [1/2,1]$
\begin{equation}
  \label{eq:l93n}
  T_\alpha(F_{\alpha,r}^{-1}(z)) =  2 F_{\alpha,r}^{-1}(z) - 1 = z_r.
\end{equation}
By $(\cdot)'$ we denote the derivative with respect to $z$.
Let $G_{\alpha,r}$ be defined as in Theorem~\ref{th:dhdalpha}. Then 
for $z \in [1/2,1]$
\begin{equation}
  \label{eq:l93m}
  G_{\alpha,r}(z) = (F_{\alpha,r}^{-1})'(z) = z_r'/2.
\end{equation}
We do all the analysis in terms of $z_r$, and the relation to $G_{\alpha,r}$
and $F_{\alpha,r}^{-1}$ is given by equations
(\ref{eq:l93n}) and (\ref{eq:l93m}).

\begin{remark}
  \label{remark:zzzzzz}
  By construction, $z_0 = z$, $z_0'=1$ and $z_0''=0$. Note that 
  $z_r\leq 1/2$ for $r\geq 1$. Also
  \begin{align}
    z_r &= z_{r+1}(1+2^\alpha z_{r+1}^\alpha), 
    \label{eq:hazdd} \\
    z_r' &= [1 + (\alpha+1) 2^\alpha z_{r+1}^\alpha] z_{r+1}',
    \label{eq:haztt} \\
    z_r' &= \prod_{j=1}^r \left[1+(\alpha+1) 2^\alpha z_j^\alpha\right]^{-1}.
    \label{eq:hazzz}
  \end{align}
\end{remark}

Our analysis is built around the following estimate:
\begin{lemma}
  \label{lemma:Iansestimate}
  For $r \geq 1$
  \[
    \frac{1}{z_0^{-\alpha} + r\, \alpha 2^\alpha}
    \leq z_r^{\alpha}
    \leq \frac{1}{z_0^{-\alpha} + r \,  \alpha (1-\alpha) 2^{\alpha-1}}.
  \]
  In particular,
  \[
    \frac{\const \, z_0^\alpha}{r}\leq z_r^\alpha \leq \frac{\const}{r}
    \qquad \text{and} \qquad
    -\log z_r \leq \const \, \left[ \logg r - \log z_0 \right].
  \]
\end{lemma}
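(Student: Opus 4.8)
\textbf{Proof proposal for Lemma~\ref{lemma:Iansestimate}.}

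The plan is to extract a recursion for the sequence $a_r := z_r^{-\alpha}$ and then compare it to a linear sequence from above and below. Starting from \eqref{eq:hazdd}, $z_r = z_{r+1}(1+2^\alpha z_{r+1}^\alpha)$, I raise both sides to the power $-\alpha$ to get $a_r = a_{r+1}(1+2^\alpha z_{r+1}^\alpha)^{-\alpha}$, hence
\[
  a_r - a_{r+1} = a_{r+1}\bigl[(1+2^\alpha z_{r+1}^\alpha)^{-\alpha} - 1\bigr]
  = z_{r+1}^{-\alpha}\bigl[(1+2^\alpha z_{r+1}^\alpha)^{-\alpha} - 1\bigr].
\]
Writing $u = 2^\alpha z_{r+1}^\alpha \in (0,2^\alpha]\subset(0,2)$ (using $z_{r+1}\le 1/2$ for $r\ge 0$, so $u\le 1$ in fact), the right side equals $2^\alpha u^{-1}\bigl[(1+u)^{-\alpha}-1\bigr]\cdot(-1)\cdot(-1)$; more cleanly, $a_{r+1}-a_r = z_{r+1}^{-\alpha}\bigl[1-(1+u)^{-\alpha}\bigr]$, which is positive, so $a_r$ is increasing. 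The key elementary estimate is a two-sided bound on $\varphi(u) := u^{-1}\bigl[1-(1+u)^{-\alpha}\bigr]$ for $u\in(0,1]$: by convexity/concavity of $(1+u)^{-\alpha}$ one gets $\alpha(1-\alpha)2^{-1}\le \varphi(u)\le \alpha$ on this range (the upper bound from $(1+u)^{-\alpha}\ge 1-\alpha u$, the lower bound from a second-order Taylor bound $(1+u)^{-\alpha}\le 1-\alpha u+\tfrac{\alpha(\alpha+1)}{2}u^2$ together with $u\le 1$, tightened to the stated constant $\alpha(1-\alpha)2^{-1}$ by a slightly more careful estimate). Then $a_{r+1}-a_r = z_{r+1}^{-\alpha}\, u\,\varphi(u) = z_{r+1}^{-\alpha}\,2^\alpha z_{r+1}^\alpha\,\varphi(u) = 2^\alpha\varphi(u)$, so
\[
  \alpha(1-\alpha)2^{\alpha-1} \;\le\; a_{r+1}-a_r \;\le\; \alpha 2^\alpha .
\]
Summing this telescoping bound from $0$ to $r-1$ and using $a_0 = z_0^{-\alpha}$ gives $z_0^{-\alpha} + r\,\alpha(1-\alpha)2^{\alpha-1}\le a_r = z_r^{-\alpha}\le z_0^{-\alpha}+r\,\alpha 2^\alpha$, which upon inverting is exactly the displayed two-sided bound on $z_r^\alpha$.

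For the ``in particular'' clause: the upper bound $z_r^\alpha\le (z_0^{-\alpha}+r\,\alpha(1-\alpha)2^{\alpha-1})^{-1}\le \const/r$ is immediate since $z_0^{-\alpha}>0$ and $\alpha(1-\alpha)2^{\alpha-1}$ is bounded below by a positive constant depending only on $\alpha_\pm$. For the lower bound, $z_r^\alpha\ge(z_0^{-\alpha}+r\,\alpha 2^\alpha)^{-1}$, and since $z_0\le 1$ we have $z_0^{-\alpha}\ge 1$, so $z_0^{-\alpha}+r\,\alpha 2^\alpha\le z_0^{-\alpha}(1+r\,\alpha 2^\alpha)\le \const\, z_0^{-\alpha}\,r$ for $r\ge 1$ (absorbing the ``$1+$'' using $r\ge 1$), giving $z_r^\alpha\ge \const\, z_0^\alpha/r$. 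Finally, taking $-\tfrac1\alpha\log$ of $z_r^\alpha\le\const/r$ yields $-\log z_r\le \tfrac1\alpha(\log\const + \log r)$ when $r\ge \const$; for small $r$ one instead uses the lower bound $z_r^\alpha\ge\const z_0^\alpha/r\ge \const z_0^\alpha$ to bound $-\log z_r$ in terms of $-\log z_0$. Combining the two regimes and replacing $\log r$ by $\logg r$ to handle $r\le e$ gives $-\log z_r\le\const[\logg r-\log z_0]$, with $\const$ depending only on $\alpha_-,\alpha_+$.

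The only real work is the elementary inequality $\alpha(1-\alpha)2^{-1}\le u^{-1}(1-(1+u)^{-\alpha})\le\alpha$ for $0<u\le 1$ and $\alpha\in(0,1)$; everything else is telescoping and bookkeeping. I expect the lower constant to be the delicate point — the naive Taylor remainder gives $\alpha(\alpha+1)/2$ in the $u^2$ term rather than the cleaner $\alpha(1-\alpha)/2$ claimed, so one should either verify directly that $1-(1+u)^{-\alpha}\ge \alpha(1-\alpha)2^{-1}u$ for $u\in(0,1]$ (e.g.\ by checking the function $u\mapsto 1-(1+u)^{-\alpha}-\alpha(1-\alpha)2^{-1}u$ is nonnegative via its derivative and endpoint values) or, if a weaker constant suffices downstream, simply replace $\alpha(1-\alpha)2^{\alpha-1}$ by any convenient positive lower bound — the ``in particular'' statement, which is all that is used later, is insensitive to the exact constant.
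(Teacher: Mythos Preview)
Your proof is correct and follows essentially the same route as the paper: both set up the telescoping recursion for $z_r^{-\alpha}$ from \eqref{eq:hazdd}, bound each increment via the two-sided Taylor estimate $1-\alpha t\le(1+t)^{-\alpha}\le 1-\alpha t+\tfrac{\alpha(\alpha+1)}{2}t^2$ with $t=2^\alpha z_j^\alpha\in(0,1]$, and sum.

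Your hesitation about the lower constant is unnecessary: the ``naive'' Taylor remainder you wrote already yields it. From $(1+u)^{-\alpha}\le 1-\alpha u+\tfrac{\alpha(\alpha+1)}{2}u^2$ you get $\varphi(u)=u^{-1}\bigl[1-(1+u)^{-\alpha}\bigr]\ge \alpha-\tfrac{\alpha(\alpha+1)}{2}u$, and since $u\le 1$ this is at least $\alpha-\tfrac{\alpha(\alpha+1)}{2}=\tfrac{\alpha(1-\alpha)}{2}$. That is exactly the constant in the statement; no extra endpoint or derivative check is needed. Likewise for the logarithmic consequence you can simply take $-\tfrac{1}{\alpha}\log$ of the \emph{lower} bound $z_r^\alpha\ge\const\,z_0^\alpha/r$ to obtain $-\log z_r\le \const+\tfrac{1}{\alpha}\log r-\log z_0\le\const[\logg r-\log z_0]$ for all $r\ge1$ in one stroke, avoiding the case split into ``small'' and ``large'' $r$.
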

\begin{proof}
  Transform equation (\ref{eq:hazdd}) into
  \[
    z_{r+1}^{-\alpha} = z_r^{-\alpha} + \frac{1-(1+2^\alpha z_{r+1}^\alpha)^{-\alpha}}{z_{r+1}^\alpha}.
  \]
  Then
  \begin{equation}
    \label{eq:wg93}
    z_r^{-\alpha} = z_0^{-\alpha} 
    + \sum_{j=1}^r \frac{1-(1+2^\alpha z_{j}^\alpha)^{-\alpha}}{z_{j}^\alpha}.
  \end{equation}
  
  For all $t \in (0,1)$ and all $\alpha \in (0,1)$
  \[
    1-\alpha t \leq (1+t)^{-\alpha} \leq 1 - \alpha t + \frac{\alpha (\alpha+1)}{2} t^2.
  \]
  Since $z_j \in (0,1/2]$ for $j \geq 1$, using the above inequality with
  $t=2^\alpha z_j^\alpha$, we obtain
  \[
    \alpha (1-\alpha) 2^{\alpha-1} 
    \leq \frac{1-(1+2^\alpha z_{j}^\alpha)^{-\alpha}}{z_{j}^\alpha} 
    \leq \alpha 2^\alpha.
  \]
  By equation (\ref{eq:wg93}),
  \[
    r \, \alpha (1-\alpha)  2^{\alpha-1} 
    \leq z_r^{-\alpha} - z_0^{-\alpha}
    \leq r \, \alpha 2^\alpha.
  \]
  for $r \geq 1$. Write
  \[
    \frac{z_0^\alpha}{r} \frac{1}{1 + \alpha 2^\alpha}
    \leq \frac{z_0^\alpha}{r} \frac{1}{r^{-1} + z_0^{\alpha} \, \alpha 2^\alpha}
    = \frac{1}{z_0^{-\alpha} + r\, \alpha 2^\alpha}
    \leq z_r^{\alpha}
    \leq \frac{1}{z_0^{-\alpha} + r \,  \alpha (1-\alpha) 2^{\alpha-1}}.
  \]
  The result follows.
\end{proof}

\begin{lemma}
  \label{lemma:Izk}
  $z_0'=1$ and
  \[
    0 \leq z_r' \leq \const \, \left(1+ r z_0^{\alpha} \alpha 2^\alpha \right)^{-(\alpha+1)/\alpha}
    \leq \const \, r^{- (\alpha+1)/\alpha} \, z_0^{-(\alpha+1)}
  \]
  for $r\geq 1$.
\end{lemma}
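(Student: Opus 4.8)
\textbf{Proposed proof of Lemma~\ref{lemma:Izk}.}
The plan is to start from the product formula \eqref{eq:hazzz}, namely $z_r' = \prod_{j=1}^r\bigl[1+(\alpha+1)2^\alpha z_j^\alpha\bigr]^{-1}$, which immediately gives $z_r'\geq 0$ as a product of positive factors, while $z_0'=1$ is already recorded in Remark~\ref{remark:zzzzzz}. Thus everything reduces to an upper bound on this product, equivalently a lower bound on $S_r:=\sum_{j=1}^r\log\bigl[1+(\alpha+1)2^\alpha z_j^\alpha\bigr]$; it suffices to prove $S_r\geq\tfrac{\alpha+1}{\alpha}\log\bigl(1+rz_0^\alpha\alpha 2^\alpha\bigr)-\const$.

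To get this lower bound I would avoid a crude linear estimate for $\log(1+x)$, which would yield a leading coefficient strictly smaller than $\tfrac{\alpha+1}{\alpha}$; instead use $\log(1+x)\geq x-\tfrac12 x^2$ (valid for all $x\geq 0$), so that $S_r\geq(\alpha+1)2^\alpha\sum_{j=1}^r z_j^\alpha-\tfrac12(\alpha+1)^2 4^\alpha\sum_{j=1}^r z_j^{2\alpha}$. The quadratic term is harmless: by Lemma~\ref{lemma:Iansestimate}, $z_j^\alpha\leq\const/j$ for $j\geq 1$, so $\sum_{j\geq 1}z_j^{2\alpha}\leq\const\sum_j j^{-2}=\const$, uniformly in $\alpha\in[\alpha_-,\alpha_+]$. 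For the linear term, the lower bound $z_j^\alpha\geq(z_0^{-\alpha}+j\alpha 2^\alpha)^{-1}$ from Lemma~\ref{lemma:Iansestimate}, together with comparison of the decreasing summand to $\int_1^{r+1}$, gives $\sum_{j=1}^r z_j^\alpha\geq\tfrac{1}{\alpha 2^\alpha}\bigl[\log(z_0^{-\alpha}+(r+1)\alpha 2^\alpha)-\log(z_0^{-\alpha}+\alpha 2^\alpha)\bigr]$. Then I would bound $z_0^{-\alpha}+\alpha 2^\alpha\leq 3z_0^{-\alpha}$ (using $z_0\leq 1$, $\alpha<1$) and $z_0^{-\alpha}+(r+1)\alpha 2^\alpha\geq z_0^{-\alpha}(1+rz_0^\alpha\alpha 2^\alpha)$, so that the $-\alpha\log z_0$ terms cancel and $\sum_{j=1}^r z_j^\alpha\geq\tfrac{1}{\alpha 2^\alpha}\bigl[\log(1+rz_0^\alpha\alpha 2^\alpha)-\log 3\bigr]$. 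Multiplying by $(\alpha+1)2^\alpha$ produces exactly the coefficient $\tfrac{\alpha+1}{\alpha}$ in front of the logarithm, and the leftover $-\tfrac{\alpha+1}{\alpha}\log 3\geq-\const$ since $\tfrac{\alpha+1}{\alpha}\leq 2/\alpha_-$.

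Exponentiating the resulting estimate on $S_r$ yields $z_r'\leq\const\,(1+rz_0^\alpha\alpha 2^\alpha)^{-(\alpha+1)/\alpha}$, the first claimed inequality. The second follows by discarding the $1$: $(1+rz_0^\alpha\alpha 2^\alpha)^{-(\alpha+1)/\alpha}\leq(r\alpha 2^\alpha)^{-(\alpha+1)/\alpha}z_0^{-(\alpha+1)}$, and $(\alpha 2^\alpha)^{-(\alpha+1)/\alpha}$ is bounded on $[\alpha_-,\alpha_+]$ (there $\alpha 2^\alpha$ is bounded away from $0$ and the exponent is bounded), hence $z_r'\leq\const\,r^{-(\alpha+1)/\alpha}z_0^{-(\alpha+1)}$.

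The only delicate point I anticipate is keeping all hidden constants uniform in $\alpha\in[\alpha_-,\alpha_+]$ — in the estimate $z_j^\alpha\leq\const/j$, in $\sum_j j^{-2}$, in $(\alpha 2^\alpha)^{-(\alpha+1)/\alpha}$, and in the $\log 3$ leftover. This is precisely why one must retain the quadratic correction in $\log(1+x)\geq x-\tfrac12 x^2$ rather than a linear bound: it is the summability $\sum_j z_j^{2\alpha}<\infty$ that lets the correction be absorbed into $\const$ while the main term keeps the sharp coefficient $(\alpha+1)/\alpha$.
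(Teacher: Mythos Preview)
Your proposal is correct and follows essentially the same approach as the paper: the paper uses the equivalent inequality $\tfrac{1}{1+t}\le e^{-t+t^2}$ (i.e.\ $\log(1+t)\ge t-t^2$) in place of your $\log(1+x)\ge x-\tfrac12 x^2$, bounds the quadratic tail via $z_j^{2\alpha}\le\const/j^2$ from Lemma~\ref{lemma:Iansestimate}, and estimates $\sum_j z_j^\alpha$ from below by the same integral comparison you describe. The remaining steps---extracting the coefficient $(\alpha+1)/\alpha$, absorbing the leftover into $\const$, and discarding the $1$ for the final inequality---match as well.
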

\begin{proof}
  By Remark \ref{remark:zzzzzz}, $z_0'=1$. Let $r \geq 1$.
  Using the inequality
  \[
    \frac{1}{1+t} \leq \exp(-t+t^2) \quad \text{for} \quad t \geq 0
  \]
  on equation (\ref{eq:hazzz}) we obtain
  \begin{equation}
    \label{eq:tasc3}
    0 \leq z_r' = \prod_{j=1}^r \frac{1}{1+(\alpha+1)2^\alpha z_j^\alpha} 
    \leq \exp \left( -\sum_{j=1}^{r} (\alpha+1) 2^\alpha z_j^\alpha 
    +\sum_{j=1}^{r} \left((\alpha+1) 2^{\alpha} z_j^{\alpha}\right)^2
    \right).
  \end{equation}
  By Lemma \ref{lemma:Iansestimate}, 
  $(z_j^\alpha)^2 \leq \const/j^2$, thus
  the second sum under the exponent is bounded by $\const$.
  Also by Lemma \ref{lemma:Iansestimate},
  \begin{align*}
    \sum_{j=1}^{r} z_j^\alpha
    & \geq \sum_{j=1}^r \frac{1}{z_0^{-\alpha} + j \alpha 2^\alpha}
      \geq \int_1^r \frac{z_0^\alpha}{1 + t z_0^{\alpha} \alpha 2^\alpha} \, dt - \const \\
    & = \frac{1}{\alpha 2^\alpha} 
      \log \left( 1 + t z_0^{\alpha} \alpha 2^\alpha\right) \big|_{t=1}^{t=r}
      - \const \\
    & \geq \frac{\log \left( 1+ r z_0^{\alpha} \alpha 2^\alpha \right)}{\alpha 2^\alpha} 
      - \const
  \end{align*}
  Thus
  \[
    - (\alpha+1) 2^\alpha \sum_{j=1}^{r} z_j^\alpha 
    \leq - \frac{\alpha+1}{\alpha} \log \left(1+ r z_0^{\alpha} \alpha 2^\alpha \right) 
      + \const,
  \]
  and by equation (\ref{eq:tasc3}),
  \[
    z_r' 
    \leq \const \, \left(1+ r z_0^{\alpha} \alpha 2^\alpha \right)^{-(\alpha+1)/\alpha}
    \leq \const \, \left(r z_0^{\alpha} \alpha 2^\alpha \right)^{-(\alpha+1)/\alpha}
    \leq \const \, r^{-(\alpha+1)/\alpha} z_0^{-(\alpha+1)}.
  \]
  \end{proof}

\begin{lemma}
  \label{lemma:Ipzk}
  $ 0 \leq -z_r''/ z_r' \leq \const \, z_0^{-2} / \max(r,1)$.
\end{lemma}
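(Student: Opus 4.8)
The plan is to get a recursion for the ratio $w_r := -z_r''/z_r'$ by differentiating the relations in Remark~\ref{remark:zzzzzz}, and then estimate the resulting sum using Lemmas~\ref{lemma:Iansestimate} and \ref{lemma:Izk}. Differentiating \eqref{eq:haztt}, $z_r' = [1+(\alpha+1)2^\alpha z_{r+1}^\alpha] z_{r+1}'$, with respect to $z$ gives
\[
  z_r'' = [1+(\alpha+1)2^\alpha z_{r+1}^\alpha] z_{r+1}''
        + \alpha(\alpha+1)2^\alpha z_{r+1}^{\alpha-1} (z_{r+1}')^2.
\]
Dividing by $z_r' = [1+(\alpha+1)2^\alpha z_{r+1}^\alpha] z_{r+1}'$ yields
\[
  \frac{z_r''}{z_r'}
  = \frac{z_{r+1}''}{z_{r+1}'}
  + \frac{\alpha(\alpha+1)2^\alpha z_{r+1}^{\alpha-1} z_{r+1}'}{1+(\alpha+1)2^\alpha z_{r+1}^\alpha}.
\]
The added term is nonnegative (all factors are $\ge 0$, using $z_j' \ge 0$ from Lemma~\ref{lemma:Izk}), so $z_r''/z_r'$ is decreasing in $r$; since $z_0'' = 0$, we get $z_r''/z_r' \le 0$, i.e. $w_r = -z_r''/z_r' \ge 0$ for all $r$. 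This settles the lower bound. Telescoping from $0$ to $r$ (with $z_0''=0$) gives
\[
  -\frac{z_r''}{z_r'}
  = \sum_{j=1}^{r} \frac{\alpha(\alpha+1)2^\alpha z_{j}^{\alpha-1} z_{j}'}{1+(\alpha+1)2^\alpha z_{j}^\alpha}
  \le \const \sum_{j=1}^{r} z_j^{\alpha-1} z_j'.
\]

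For the upper bound I would estimate each summand using the two earlier lemmas. By Lemma~\ref{lemma:Iansestimate}, $z_j^{\alpha-1} = z_j^\alpha / z_j \le \const\, j^{-1} / z_j$, and also $z_j^\alpha \le \const (1 + j z_0^\alpha \alpha 2^\alpha)^{-1}$, so $z_j^{-1} = (z_j^\alpha)^{-1/\alpha} \le \const\, (1 + j z_0^\alpha \alpha 2^\alpha)^{1/\alpha}$; combined with the bound $z_j' \le \const (1 + j z_0^\alpha \alpha 2^\alpha)^{-(\alpha+1)/\alpha}$ from Lemma~\ref{lemma:Izk}, the powers of $(1+jz_0^\alpha\alpha 2^\alpha)$ cancel: $z_j^{\alpha-1} z_j' \le \const\, z_j^\alpha\, z_j^{-1}\, z_j' \le \const\, (1+jz_0^\alpha\alpha 2^\alpha)^{-1}\cdot(1+jz_0^\alpha\alpha 2^\alpha)^{1/\alpha}\cdot(1+jz_0^\alpha\alpha 2^\alpha)^{-(\alpha+1)/\alpha}$, and since $-1 + 1/\alpha - (\alpha+1)/\alpha = -2$, this is $\le \const\,(1 + j z_0^\alpha\alpha 2^\alpha)^{-2}$. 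Then
\[
  -\frac{z_r''}{z_r'}
  \le \const \sum_{j=1}^{r} (1 + j z_0^\alpha \alpha 2^\alpha)^{-2}
  \le \const \int_0^{r} (1 + t z_0^\alpha \alpha 2^\alpha)^{-2}\, dt
  \le \const\, \frac{1}{z_0^\alpha}\cdot\frac{r z_0^\alpha}{1 + r z_0^\alpha \alpha 2^\alpha},
\]
after evaluating the integral (which gives $\frac{1}{\alpha 2^\alpha z_0^\alpha}\bigl(1 - (1+rz_0^\alpha\alpha2^\alpha)^{-1}\bigr)$). Bounding $\frac{r z_0^\alpha}{1 + r z_0^\alpha \alpha 2^\alpha} \le \const\, \min(r z_0^\alpha, z_0^{-\alpha})$ and using $z_0 \le 1$ shows this is $\le \const\, z_0^{-\alpha}\cdot z_0^{-\alpha}/\max(r,1) \le \const\, z_0^{-2}/\max(r,1)$ (here I use $2\alpha \le 2$, and for $r=0$ the empty sum gives $0$, which trivially satisfies the bound).

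The main obstacle I anticipate is not any single hard estimate but the bookkeeping of exponents in the telescoped sum: one must combine three different powers of $(1+jz_0^\alpha\alpha 2^\alpha)$ coming from $z_j^\alpha$, from $z_j^{-1}$, and from $z_j'$, and check they cancel to give exactly a summable $(1+j z_0^\alpha\alpha 2^\alpha)^{-2}$. A secondary subtlety is converting the factor $z_0^{-\alpha}$ produced by the sum into the cleaner $z_0^{-2}/\max(r,1)$ claimed in the statement; this uses $z_0 \le 1$ and the crude bound $r z_0^\alpha/(1+rz_0^\alpha\alpha2^\alpha) \le \const/\max(r,1)\cdot\max(r,1)\cdot\min(rz_0^\alpha,\dots)$ — I would just note $\frac{rz_0^\alpha}{1+rz_0^\alpha} \le \min(rz_0^\alpha,1) \le \frac{1}{\max(r,1)}\cdot r\cdot z_0^\alpha$ is too lossy and instead bound $\frac{1}{1+rz_0^\alpha\alpha2^\alpha}\le \frac{\const}{\max(r,1)z_0^\alpha}$ directly, giving $-z_r''/z_r' \le \const\, z_0^{-2\alpha}/\max(r,1) \le \const\, z_0^{-2}/\max(r,1)$.
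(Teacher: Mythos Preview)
Your derivation of the recursion
\[
  \frac{z_r''}{z_r'} = \frac{z_{r+1}''}{z_{r+1}'}
  + \frac{\alpha(\alpha+1)2^\alpha z_{r+1}^{\alpha-1} z_{r+1}'}{1+(\alpha+1)2^\alpha z_{r+1}^\alpha},
\]
the sign argument, the telescoping, and the termwise estimate $z_j^{\alpha-1}z_j' \le \const\,(1+jz_0^\alpha\alpha 2^\alpha)^{-2}$ are exactly the paper's approach (the paper records the equivalent form $z_j^{\alpha-1}z_j' \le \const\, j^{-2} z_0^{-2}$).

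The gap is in your last step. You correctly obtain
\[
  -\frac{z_r''}{z_r'} \le \const\,\frac{r}{1+r z_0^\alpha \alpha 2^\alpha},
\]
and then try to reach $\const\, z_0^{-2\alpha}/\max(r,1)$ by bounding $\frac{1}{1+rz_0^\alpha\alpha 2^\alpha}\le \frac{\const}{r z_0^\alpha}$. But you have dropped the factor $r$ in the numerator: with that factor in place, the bound collapses to $\const\, z_0^{-\alpha}$, with no decay in $r$. In fact the factor $1/\max(r,1)$ cannot be obtained by any argument: from your own telescoped identity, $-z_r''/z_r' = \sum_{j=1}^r a_j$ with each $a_j>0$, so the sequence is strictly increasing in $r$ and is bounded below by $a_1>0$ for all $r\ge 1$; hence it is not $O(1/r)$. (The heuristic $z_r \approx (z_0^{-\alpha}+cr)^{-1/\alpha}$ gives $-z_r''/z_r' \to (\alpha+1)/z_0$ as $r\to\infty$.)

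The paper's proof stops at the increment bound $\bigl|\tfrac{z_r''}{z_r'}-\tfrac{z_{r+1}''}{z_{r+1}'}\bigr|\le \const\,(r+1)^{-2}z_0^{-2}$ and says ``the result follows''; summing these increments yields $-z_r''/z_r' \le \const\, z_0^{-2}$, which is also what your argument legitimately proves. The $1/\max(r,1)$ in the displayed statement is a slip.
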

\begin{proof}
  Differentiating both sides of the equation
  (\ref{eq:haztt}), we obtain
  \begin{equation}
    \label{eq:dz2}
    z_r'' = \alpha(\alpha+1) 2^\alpha z_{r+1}^{\alpha-1} (z_{r+1}')^2 
    + (1+(\alpha+1)2^\alpha z_{r+1}^\alpha) z_{r+1}''.
  \end{equation}
  Dividing the above by $z_r' = [1 + (\alpha+1) 2^\alpha z_{r+1}^\alpha] z_{r+1}'$
  we get
  \[
    \frac{z_r''}{z_r'} = \frac{\alpha(\alpha+1) 2^\alpha z_{r+1}^{\alpha-1} z_{r+1}'}{
      1 + (\alpha+1) 2^\alpha z_{r+1}^\alpha}
    + \frac{z_{r+1}''}{z_{r+1}'}.
  \]
  Recall that $z_0''/z_0' = 0$ and $z_r' \geq 0$, thus $z_r'' \leq 0$ for all $r$.
  By Lemmas \ref{lemma:Iansestimate} and \ref{lemma:Izk} we have
  \[
    0 \leq z_r^{\alpha-1} z_r' 
    \leq \const \, \left(\frac{z_0^{\alpha}}{r}\right)^{(\alpha-1)/\alpha}
      r^{- \frac{\alpha+1}{\alpha}} \, z_0^{-(\alpha+1)}
    \leq \const \, r^{-2} \, z_0^{-2}
  \]
  for $r \geq 1$. Thus
  \[
    0 \leq \frac{z_r''}{z_r'} - \frac{z_{r+1}''}{z_{r+1}'}
    \leq \const \, (r+1)^{-2} \, z_0^{-2}.
  \]
  The result follows.
\end{proof}

\begin{lemma}
  \label{lemma:Ippzk}
  $|z_r'''/ z_r' | \leq \const \, z_0^{-\alpha - 4} / \max(r^2,1)$.
\end{lemma}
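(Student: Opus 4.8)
The plan is to iterate the differentiation one more time, exactly as in the proofs of Lemmas~\ref{lemma:Izk} and~\ref{lemma:Ipzk}. Starting from the recursion \eqref{eq:dz2} for $z_r''$, I would differentiate both sides with respect to $z$ once more to obtain a recursion expressing $z_r'''$ in terms of $z_{r+1}^\alpha$, $z_{r+1}^{\alpha-1}$, $z_{r+1}^{\alpha-2}$ and the derivatives $z_{r+1}'$, $z_{r+1}''$, $z_{r+1}'''$. Schematically this will have the shape
\[
  z_r''' = \alpha(\alpha+1)2^\alpha\Bigl[(\alpha-1)z_{r+1}^{\alpha-2}(z_{r+1}')^3 + 3 z_{r+1}^{\alpha-1} z_{r+1}' z_{r+1}''\Bigr]
    + \bigl(1+(\alpha+1)2^\alpha z_{r+1}^\alpha\bigr) z_{r+1}'''.
\]
Dividing through by $z_r' = \bigl(1+(\alpha+1)2^\alpha z_{r+1}^\alpha\bigr)z_{r+1}'$, the last term telescopes to $z_{r+1}'''/z_{r+1}'$, and since $z_0'''/z_0' = 0$ (because $z_0 = z$, so $z_0'' = z_0''' = 0$), summing gives
\[
  \frac{z_r'''}{z_r'} = \sum_{j=1}^{r} \frac{\alpha(\alpha+1)2^\alpha}{1+(\alpha+1)2^\alpha z_j^\alpha}
   \Bigl[(\alpha-1) z_j^{\alpha-2}(z_j')^2 \frac{z_j'}{z_j'} + 3 z_j^{\alpha-1} z_j' \frac{z_j''}{z_j'}\Bigr],
\]
or rather, after regrouping correctly, a sum over $j$ of terms controlled by $z_j^{\alpha-2}(z_j')^2$ and $z_j^{\alpha-1}z_j'\,(z_j''/z_j')$.

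Next I would bound each summand using the three estimates already proved. From Lemma~\ref{lemma:Iansestimate} one has $z_j^\alpha \le \const/j$, hence $z_j^{\alpha-2} = z_j^\alpha \cdot z_j^{-2} = z_j^\alpha (z_j^\alpha)^{-2/\alpha} \le \const\, j^{-1} (\const z_0^\alpha/j)^{-2/\alpha} = \const\, z_0^{-2}\, j^{(2-\alpha)/\alpha}$; combined with $z_j' \le \const\, j^{-(\alpha+1)/\alpha} z_0^{-(\alpha+1)}$ from Lemma~\ref{lemma:Izk}, the factor $z_j^{\alpha-2}(z_j')^2$ is $\le \const\, z_0^{-2-2(\alpha+1)}\, j^{(2-\alpha)/\alpha - 2(\alpha+1)/\alpha} = \const\, z_0^{-2\alpha-4}\, j^{-3}$. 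For the other piece, $z_j^{\alpha-1}z_j' \le \const\, r^{-2}z_0^{-2}$ was essentially computed inside the proof of Lemma~\ref{lemma:Ipzk}, so multiplying by $|z_j''/z_j'| \le \const\, z_0^{-2}/j$ from Lemma~\ref{lemma:Ipzk} gives $\le \const\, z_0^{-4}\, j^{-3}$. Since $z_0 \le 1$, both are dominated by $\const\, z_0^{-2\alpha-4}\, j^{-3}$ (if needed one checks which power of $z_0^{-1}$ is larger; $2\alpha+4$ versus $4$, and since $\alpha>0$ the former dominates). Summing $\sum_{j=1}^r \const\, z_0^{-2\alpha-4} j^{-3} \cdot (\text{bounded factor } (1+(\alpha+1)2^\alpha z_j^\alpha)^{-1} \le 1)$ — wait, I should be careful that I want decay like $\max(r^2,1)^{-1}$, not a convergent sum.

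Let me reconsider: the target is $|z_r'''/z_r'| \le \const\, z_0^{-\alpha-4}/\max(r^2,1)$, which decays in $r$, so the sum $\sum_{j=1}^r(\cdots)$ must itself be $O(r^{-2})$ — impossible for a sum of positive terms unless one does not crudely bound the telescoping. The right approach is therefore \emph{not} to bound the telescoped sum term by term, but to observe that $z_r'''/z_r' - z_{r+1}'''/z_{r+1}'$ is the increment, bound this increment by $\const\, z_0^{-\alpha-4}/\max(r,1)^3$ — using the sharper bounds above but being careful to extract only $z_0^{-\alpha-4}$ (note $z_j^{\alpha-2}(z_j')^2$: write it as $z_j^\alpha \cdot z_j^{-2}(z_j')^2$ and use $z_j' \le \const$ on one factor to trade $z_0$ powers for $r$ powers more economically, aiming at exactly $z_0^{-\alpha-4}\cdot j^{-3}$) — and then compare $\sum_{j\ge r} j^{-3} \le \const\, r^{-2}$ against the fact that $z_r'''/z_r'$ is the tail sum $\sum_{j > r}(\text{increments})$ — but that requires knowing the sum converges and equals $0$ at infinity, which it does not since $z_0'''/z_0' = 0$ is at $r=0$. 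The correct structure, as in Lemma~\ref{lemma:Ipzk}, is: $z_r'''/z_r' = \sum_{j=1}^{r}(\text{increment}_j)$ where increment$_j \ge 0$ or has controlled sign, $|\text{increment}_j| \le \const\, z_0^{-\alpha-4}\, j^{-3}$, so $|z_r'''/z_r'| \le \const\, z_0^{-\alpha-4}\sum_{j=1}^\infty j^{-3} = \const\, z_0^{-\alpha-4}$ — but that gives a uniform bound, not $r^{-2}$ decay. Hence the increments must actually be much smaller than $j^{-3}$, or there is cancellation; the main obstacle is pinning down the genuine size of increment$_j$. I expect that a more careful expansion shows increment$_j = \const\, z_0^{-\alpha-4} j^{-3} + (\text{lower order})$ and that the \emph{leading} contribution, when summed from $1$ to $r$, telescopes or combines with the $z_r'$ prefactor to produce the claimed $\max(r^2,1)^{-1}$ — more likely, one proves directly by an inductive ansatz $|z_r'''/z_r'| \le \const\, z_0^{-\alpha-4}/\max(r^2,1)$ using the recursion, where the increment bound $\const\, z_0^{-\alpha-4} j^{-3}$ is exactly what is needed for the inductive step $(r+1)^{-2} + \const\, z_0^{-\alpha-4}(r+1)^{-3} \le \const\, z_0^{-\alpha-4} r^{-2}$ to close, possibly after absorbing a constant. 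The hard part will be bookkeeping the powers of $z_0^{-1}$ so that exactly $z_0^{-\alpha-4}$ emerges (not $z_0^{-2\alpha-4}$), which forces one to split factors of $z_j^\alpha$ judiciously between "$\le \const/j$" and "$\le 1$" and to use $z_j' \le \const$ rather than its sharp decay in places where the $r$-decay is already sufficient.
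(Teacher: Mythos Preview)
Your strategy --- differentiate \eqref{eq:dz2}, divide by $z_r'=(1+(\alpha+1)2^\alpha z_{r+1}^\alpha)z_{r+1}'$, and telescope using $z_0'''/z_0'=0$ --- is exactly the paper's. The paper writes the increment as
\[
  \frac{z_r'''}{z_r'}-\frac{z_{r+1}'''}{z_{r+1}'}
  =\frac{(\alpha-1)\alpha(\alpha+1)2^\alpha z_{r+1}^{\alpha-2}(z_{r+1}')^2}{1+(\alpha+1)2^\alpha z_{r+1}^\alpha}
  +\frac{3\alpha(\alpha+1)2^\alpha z_{r+1}^{\alpha-1}z_{r+1}'}{1+(\alpha+1)2^\alpha z_{r+1}^\alpha}\,
     \frac{z_{r+1}''}{z_{r+1}'}
\]
and bounds the two pieces by $\const\,(r+1)^{-3}z_0^{-\alpha-4}$ and $\const\,(r+1)^{-3}z_0^{-4}$ via Lemmas~\ref{lemma:Iansestimate}--\ref{lemma:Ipzk}. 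Your exponent $z_0^{-2\alpha-4}$ on the first piece is slightly wasteful; the paper gets $z_0^{-\alpha-4}$ by splitting $z_j^{\alpha-2}(z_j')^2=(z_j^{\alpha-1}z_j')\cdot(z_j^{-1}z_j')$ and using the bound $z_j^{\alpha-1}z_j'\le\const\,j^{-2}z_0^{-2}$ already obtained inside the proof of Lemma~\ref{lemma:Ipzk} together with $z_j^{-1}z_j'\le\const\,j^{-1}z_0^{-\alpha-2}$.

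Your worry about the factor $1/\max(r^2,1)$ is entirely justified, and your diagnosis is correct: telescoping from $z_0'''/z_0'=0$ gives
\[
  \Bigl|\frac{z_r'''}{z_r'}\Bigr|
  \le\sum_{j=1}^{r}\bigl|\text{increment}_j\bigr|
  \le\const\,z_0^{-\alpha-4}\sum_{j\ge1}j^{-3}
  =\const\,z_0^{-\alpha-4},
\]
a bound \emph{uniform} in $r$, not decaying. The paper simply writes ``Since $z_0'''/z_0'=0$, the result follows'' and does not supply any additional mechanism for the $r^{-2}$. In fact both increment terms above have the same (negative) sign, so $z_r'''/z_r'$ is monotone in $r$ and tends to a nonzero limit; the factor $1/\max(r^2,1)$ in the statement is not delivered by this argument (the same phenomenon already occurs in Lemma~\ref{lemma:Ipzk}, where $-z_r''/z_r'=\sum_{j\le r}c_j$ with $c_j>0$ is strictly increasing). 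None of your proposed fixes --- tail sum, induction, sign cancellation --- can manufacture decay that is not there.

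For the paper's purposes this does not matter: the only use of Lemma~\ref{lemma:Ippzk} is in Subsection~\ref{subsection:proofofth:lsv} to verify assumption~\ref{A3}, where one takes $z_0\ge 1/2$ and needs only $|z_r'''/z_r'|\le\const$. Your argument, with the $z_0$-power tightened to $-\alpha-4$, already proves that.
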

\begin{proof}
  Differentiate the equation (\ref{eq:dz2}).
  This results in
  \begin{align*}
    z_r'''  = (\alpha-1) \alpha (\alpha+1) 2^\alpha z_{r+1}^{\alpha-2} (z_{r+1}')^3
     + 3 \alpha (\alpha+1) 2^\alpha z_{r+1}^{\alpha-1} z_{r+1}'z_{r+1}'' \\
     + (1+(\alpha+1) 2^\alpha z_{r+1}^\alpha) z_{r+1}'''.
  \end{align*}
  Dividing the above by $z_r' = [1 + (\alpha+1) 2^\alpha z_{r+1}^\alpha] z_{r+1}'$
  we get
  \begin{equation*}
    \begin{aligned}
    \frac{z_r'''}{z_r'}
    = \frac{(\alpha-1) \alpha (\alpha+1) 2^\alpha z_{r+1}^{\alpha-2} (z_{r+1}')^2}
           {1 + (\alpha+1) 2^\alpha z_{r+1}^\alpha}
     +\frac{3 \alpha (\alpha+1) 2^\alpha z_{r+1}^{\alpha-1} z_{r+1}'}
           {1 + (\alpha+1) 2^\alpha z_{r+1}^\alpha} \frac{z_{r+1}''}{z_{r+1}'}
     +\frac{z_{r+1}'''}{z_{r+1}'}.
    \end{aligned}
  \end{equation*}
  Using Lemmas~\ref{lemma:Iansestimate}, \ref{lemma:Izk} and \ref{lemma:Ipzk} we bound the
  first two terms in the right hand side above by $\const \, (r+1)^{-3} \, z_0^{-\alpha - 4}$ 
  and $\const \, (r+1)^{-3} \, z_0^{-4}$ respectively.
  Thus 
  \[
    \left| \frac{z_r'''}{z_r'} - \frac{z_{r+1}'''}{z_{r+1}'} \right| 
    \leq \const \, (r+1)^{-3} \, z_0^{-\alpha - 4}.
  \]
  Since $z_0'''/z_0' = 0$, the result follows.
\end{proof}

\begin{lemma}
  \label{lemma:dzalpha}
  $\partial_\alpha z_0 = 0$ and for $r \geq 1$
  \begin{align*}
    0 \leq \frac{\partial_\alpha z_r}{z_r} 
    &\leq \const \, \logg (r z_0^\alpha) \, \left[ \logg r -\log z_0 \right]
    \qquad \text{and} \\
    0 \leq \partial_\alpha z_r 
    & \leq \const \, \frac{\logg (r z_0^\alpha)}{r^{1/\alpha}}
      \left[ \logg r -\log z_0 \right].
  \end{align*}
\end{lemma}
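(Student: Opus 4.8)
The plan is to differentiate the defining recursion \eqref{eq:hazdd} in $\alpha$ and then telescope. First, $z_0 = z$ does not depend on $\alpha$, so $\partial_\alpha z_0 = 0$; also, by \eqref{eq:l93n} we have $z_r = 2F_{\alpha,r}^{-1}(z) - 1$, so $z_r$ inherits the continuous partial derivatives in $(\alpha,z)$ established for $F_{\alpha,r}^{-1}$ at the start of this subsection, and \eqref{eq:hazdd} may be differentiated in $\alpha$ termwise. Using $\partial_\alpha(2^\alpha z_{r+1}^\alpha) = 2^\alpha z_{r+1}^\alpha\bigl[\log(2z_{r+1}) + \alpha\,\partial_\alpha z_{r+1}/z_{r+1}\bigr]$, differentiating \eqref{eq:hazdd} gives
\[
  \partial_\alpha z_r = \bigl[1+(\alpha+1)2^\alpha z_{r+1}^\alpha\bigr]\,\partial_\alpha z_{r+1} + 2^\alpha z_{r+1}^{\alpha+1}\log(2z_{r+1})
\]
(note the bracket is $z_r'/z_{r+1}'$ by \eqref{eq:haztt}, as one expects from differentiating $E_\alpha(z_{r+1})=z_r$). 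Since $z_{r+1}\le 1/2$ (Remark~\ref{remark:zzzzzz}), the last term is $\le 0$, so solving for $\partial_\alpha z_{r+1}$ gives $\partial_\alpha z_{r+1}\ge \partial_\alpha z_r / \bigl[1+(\alpha+1)2^\alpha z_{r+1}^\alpha\bigr]$. Starting from $\partial_\alpha z_0=0$, induction yields $\partial_\alpha z_r\ge 0$ for all $r$, which together with $z_r>0$ gives the two lower bounds of the lemma.

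For the upper bounds, put $v_r = \partial_\alpha z_r/z_r$, so $v_0 = 0$. Dividing the displayed recursion, solved for $\partial_\alpha z_{r+1}$, by $z_{r+1}$ and using $z_{r+1}(1+2^\alpha z_{r+1}^\alpha) = z_r$ from \eqref{eq:hazdd}, one obtains
\[
  v_{r+1} = \frac{1+2^\alpha z_{r+1}^\alpha}{1+(\alpha+1)2^\alpha z_{r+1}^\alpha}\,v_r
  + \frac{2^\alpha z_{r+1}^\alpha\,(-\log(2z_{r+1}))}{1+(\alpha+1)2^\alpha z_{r+1}^\alpha}
  \le v_r + 2^\alpha z_{r+1}^\alpha\,(-\log z_{r+1}),
\]
where I used $\alpha+1\ge 1$ (which makes the first coefficient $\le 1$), $v_r\ge 0$, the fact that the second denominator is $\ge 1$, and $-\log(2z_{r+1})\le -\log z_{r+1}$. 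Telescoping from $v_0=0$ gives $0\le v_r \le 2^\alpha\sum_{j=1}^r z_j^\alpha(-\log z_j)$.

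It remains to estimate this sum via Lemma~\ref{lemma:Iansestimate}, which gives $z_j^\alpha\le (z_0^{-\alpha}+cj)^{-1}$ with $c = \alpha(1-\alpha)2^{\alpha-1}$ bounded away from $0$ on $[\alpha_-,\alpha_+]$, and $-\log z_j\le \const(\logg j - \log z_0)$. As $\logg$ is nondecreasing and $\log z_0\le 0$, I bound $\logg j - \log z_0\le \logg r - \log z_0$ and pull it out of the sum; the remaining $\sum_{j=1}^r (z_0^{-\alpha}+cj)^{-1}$ is at most $\int_0^r (z_0^{-\alpha}+ct)^{-1}\,dt = c^{-1}\log(1+c\,rz_0^\alpha)\le \const\,\logg(rz_0^\alpha)$ (a short computation with $\logg$, absorbing the constant $c$ inside its argument). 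This gives $v_r\le \const\,\logg(rz_0^\alpha)\,(\logg r - \log z_0)$, the first inequality; multiplying by $z_r$ and using $z_r\le \const\,r^{-1/\alpha}$ (again from Lemma~\ref{lemma:Iansestimate}, the constant bounded since $\alpha\ge\alpha_-$) gives $\partial_\alpha z_r = z_r v_r\le \const\,r^{-1/\alpha}\,\logg(rz_0^\alpha)\,(\logg r - \log z_0)$, the second inequality. The points requiring care are the justification of termwise differentiation (handled via \eqref{eq:l93n}) and the routine-but-fiddly $\logg$ bookkeeping; the heart of the argument is the one-line recursion for $v_r$ with coefficient $\le 1$ — which hinges on $(\alpha+1)\ge 1$ together with identity \eqref{eq:hazdd} — feeding into the estimates of Lemma~\ref{lemma:Iansestimate}.
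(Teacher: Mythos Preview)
Your proof is correct and follows essentially the same route as the paper: differentiate the recursion \eqref{eq:hazdd} in $\alpha$, use $z_{r+1}\le 1/2$ to get nonnegativity by induction, pass to the ratio $v_r=\partial_\alpha z_r/z_r$ via \eqref{eq:hazdd} so that the multiplicative coefficient becomes $\le 1$, telescope, and finish with Lemma~\ref{lemma:Iansestimate}. The only cosmetic difference is that you pull the factor $\logg j-\log z_0$ out of the sum before integrating, whereas the paper leaves $\logg t-\log z_0$ inside the integral; both yield the stated bound up to the $\const$.
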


\begin{proof}
Since $z_0 = z$ does not depend on $\alpha$, $\partial_\alpha z_0 = 0$.

Differentiating the identity $z_{r+1}(1+2^\alpha z_{r+1}^\alpha)=z_r$ by $\alpha$ we
obtain a recursive relationz
\[
  \partial_\alpha z_{r+1} = \frac{ \partial_\alpha z_{r} + 2^\alpha z_{r+1}^{\alpha+1} (-\log(2z_{r+1}))}
  {1+(\alpha+1) 2^\alpha z_{r+1}^\alpha}.
\]
Since $z_{r+1} \leq 1/2$ for all $r$, it follows that $\partial_\alpha z_r \geq 0$ for all $r$.
It is convenient to rewrite the above, dividing by $z_{r+1}$ and using
$z_{r+1}(1+2^\alpha z_{r+1}^\alpha)=z_r$:
\[
  \frac{\partial_\alpha z_{r+1}}{z_{r+1}} = 
  \frac{(1+2^\alpha z_{r+1}^\alpha)
  \frac{\partial_\alpha z_{r}}{z_r} + 2^\alpha z_{r+1}^\alpha (-\log(2z_{r+1}))}{
    1+(\alpha+1) 2^\alpha z_{r+1}^\alpha},
\]
which implies
\[
  \frac{\partial_\alpha z_{r+1}}{z_{r+1}} \leq \frac{\partial_\alpha z_{r}}{ z_{r}}
  + 2^\alpha z_{r+1}^\alpha (-\log (2 z_{r+1})).
\]
By Lemma \ref{lemma:Iansestimate},
\[
  2^\alpha z_r^\alpha (-\log (2 z_r)) 
  \leq \const \, \frac{\logg r - \log z_0}{z_0^{-\alpha} + r \alpha(1-\alpha) 2^{\alpha-1}}.
\]
Hence
\begin{equation}
  \label{eq:11076}
  \begin{aligned}
    \frac{\partial_\alpha z_{r}}{z_{r}} 
    & \leq \sum_{j=1}^{r} 2^\alpha z_j^\alpha (-\log (2 z_j))
      \leq \const \, \int_1^r \frac{\logg t - \log z_0}{
      z_0^{-\alpha} + t \alpha(1-\alpha)2^{\alpha-1}} \, dt \\
    & \leq \const \, \logg (r z_0^\alpha) \left[ \logg (r z_0^\alpha) - \log z_0 \right].
  \end{aligned}
\end{equation}
The first part of the lemma follows. To prove the second part,
observe that by Lemma~\ref{lemma:Iansestimate}, 
$z_r \leq \const \, r^{-1/\alpha}$.
\end{proof}

\begin{lemma}
  \label{lemma:dIalpha}
  $\left|(\partial_\alpha z_r')/z_r'\right| 
  \leq \const \, \left[\logg (r z_0^\alpha) \right]^2 \, \left[ \logg r - \log z_0 \right]$.
\end{lemma}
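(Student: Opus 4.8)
The plan is to differentiate the product formula~\eqref{eq:hazzz} for $z_r'$ and reduce the estimate to sums already controlled in Lemmas~\ref{lemma:Iansestimate}, \ref{lemma:Izk} and~\ref{lemma:dzalpha}. Taking logarithms in~\eqref{eq:hazzz} gives $\log z_r' = -\sum_{j=1}^r \log[1+(\alpha+1)2^\alpha z_j^\alpha]$, and differentiating in $\alpha$ (a finite sum, so term-by-term differentiation is legitimate) yields
\[
  \frac{\partial_\alpha z_r'}{z_r'} = -\sum_{j=1}^{r} \frac{\partial_\alpha\bigl[(\alpha+1)2^\alpha z_j^\alpha\bigr]}{1+(\alpha+1)2^\alpha z_j^\alpha}.
\]
Expanding $\partial_\alpha[(\alpha+1)2^\alpha z_j^\alpha] = 2^\alpha z_j^\alpha\bigl[1+(\alpha+1)\log 2+(\alpha+1)\log z_j+\alpha(\alpha+1)\,\partial_\alpha z_j/z_j\bigr]$ and using that the denominator is $\ge 1$, that $z_j\le 1/2$ for $j\ge 1$ (so $-\log z_j>0$), and that $\partial_\alpha z_j\ge 0$ by Lemma~\ref{lemma:dzalpha}, I would obtain
\[
  \left|\frac{\partial_\alpha z_r'}{z_r'}\right|
  \le \const\sum_{j=1}^r z_j^\alpha
   + \const\sum_{j=1}^r z_j^\alpha(-\log z_j)
   + \const\sum_{j=1}^r z_j^\alpha\,\frac{\partial_\alpha z_j}{z_j}.
\]

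It then remains to bound the three sums. For the first, Lemma~\ref{lemma:Iansestimate} together with comparison with $\int_0^r(z_0^{-\alpha}+\const\,t)^{-1}\,dt$ gives $\sum_{j=1}^r z_j^\alpha\le\const\,\logg(r z_0^\alpha)$. For the second, writing $-\log z_j = -\log(2z_j)+\log 2$ and invoking the estimate $\sum_{j=1}^r 2^\alpha z_j^\alpha(-\log(2z_j))\le\const\,\logg(r z_0^\alpha)[\logg(r z_0^\alpha)-\log z_0]$ already obtained in the proof of Lemma~\ref{lemma:dzalpha} (see~\eqref{eq:11076}), combined with the first bound, gives $\sum_{j=1}^r z_j^\alpha(-\log z_j)\le\const\,\logg(r z_0^\alpha)[\logg(r z_0^\alpha)-\log z_0]$. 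For the third, Lemma~\ref{lemma:dzalpha} and monotonicity of $\logg$ bound $\partial_\alpha z_j/z_j\le\const\,\logg(r z_0^\alpha)[\logg r-\log z_0]$ for all $j\le r$, so combined with the first bound, $\sum_{j=1}^r z_j^\alpha\,\partial_\alpha z_j/z_j\le\const\,[\logg(r z_0^\alpha)]^2[\logg r-\log z_0]$. Finally, since $\logg\ge 1$, $\logg(r z_0^\alpha)\le\logg r$ (because $z_0\le 1$) and $-\log z_0\ge 0$, the third bound dominates the first two, which finishes the proof.

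The work here is essentially bookkeeping with the several logarithmic factors; the only conceptual point is that the term $\partial_\alpha z_j^\alpha$ produces a factor $\partial_\alpha z_j/z_j$, so this lemma genuinely relies on Lemma~\ref{lemma:dzalpha}, and that one must take care to keep the sharper factor $\logg(r z_0^\alpha)$ (rather than the cruder $\logg r$) inside the square. No real analytic obstacle arises once the preceding estimates are in hand.
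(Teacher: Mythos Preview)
Your proof is correct and is essentially the paper's own argument: the paper differentiates the recursion~\eqref{eq:haztt} in $\alpha$, divides by $z_r'$, and telescopes, while you take the logarithm of the product~\eqref{eq:hazzz} and differentiate term by term---these two computations produce literally the same sum $\sum_{j=1}^r \partial_\alpha[(\alpha+1)2^\alpha z_j^\alpha]/(1+(\alpha+1)2^\alpha z_j^\alpha)$, and both of you then bound the three pieces via Lemmas~\ref{lemma:Iansestimate} and~\ref{lemma:dzalpha} in the same way (the paper integrates the summand, you pull out the maximum and use $\sum z_j^\alpha\le\const\,\logg(rz_0^\alpha)$; either gives the stated bound).
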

\begin{proof}
  Note that $\partial_\alpha z_0' = 0$, because $z_0=z$ does not depend on $\alpha$.
  
  Differentiate equation (\ref{eq:haztt}) by $\alpha$. This results in
  \begin{align*}
    \partial_\alpha z_r'
    =
    \left(
      2^\alpha z_{r+1}^\alpha + (\alpha+1) 2^\alpha z_{r+1}^\alpha \log(2z_{r+1})
      + \alpha (\alpha+1) 2^\alpha z_{r+1}^{\alpha-1} \partial_\alpha z_{r+1}
    \right) z_{r+1}' \\
    + (1+(\alpha+1) 2^\alpha z_{r+1}^\alpha) \partial_\alpha z_{r+1}'
  \end{align*}
  Dividing the above by $z_r' = [1 + (\alpha+1) 2^\alpha z_{r+1}^\alpha] z_{r+1}'$
  we get
  \begin{equation*}
    \label{eq:lka67}
    \frac{\partial_\alpha z_r'}{z_r'}
    =
    \frac{
      2^\alpha z_{r+1}^\alpha + (\alpha+1) 2^\alpha z_{r+1}^\alpha \log(2z_{r+1})
      + \alpha (\alpha+1) 2^\alpha z_{r+1}^{\alpha-1} \partial_\alpha z_{r+1}
         }{1 + (\alpha+1) 2^\alpha z_{r+1}^\alpha}
    + \frac{\partial_\alpha z_{r+1}'}{z_{r+1}'}.
  \end{equation*}
  For $r\geq 1$ Lemmas \ref{lemma:Iansestimate} and \ref{lemma:dzalpha} give
  $|z_r^\alpha | \leq \const / (z_0^{-\alpha} + r \alpha(1-\alpha) 2^{\alpha-1})$, 
  \begin{align*}
    | z_r^\alpha \log z_r |
    &\leq \const \, \frac{\logg r - \log z_0}{z_0^{-\alpha} + r \alpha(1-\alpha) 2^{\alpha-1}}
    \qquad \text{and} \\
    |z_r^{\alpha-1} \partial_\alpha z_r| 
    = \left|z_r^{\alpha} \frac{\partial_\alpha z_r}{z_r}\right|
    & \leq \const \, \frac{\logg (r z_0^\alpha) \, ( \logg r  -\log z_0 )}{
      z_0^{-\alpha} + r \alpha(1-\alpha) 2^{\alpha-1}}.
  \end{align*}
  Therefore 
  \[
    \left| \frac{\partial_\alpha z_r'}{z_r'} - \frac{\partial_\alpha z_{r+1}'}{z_{r+1}'} \right|
    \leq \const \, \frac{\logg ((r+1) z_0^\alpha) \, ( \logg (r+1)  -\log z_0 )}{
      z_0^{-\alpha} + (r+1) \alpha(1-\alpha) 2^{\alpha-1}}.
  \]
  Thus
  \[
    \left|\frac{\partial_\alpha z_r'}{z_r'}\right| 
    \leq \const \, \int_1^r 
      \frac{\logg (t z_0^\alpha) \, ( \logg t  -\log z_0 )}{
        z_0^{-\alpha} + t \alpha(1-\alpha) 2^{\alpha-1}} \, dt
    \leq \const \, \left[ \logg (r z_0^\alpha) \right]^2 \left( \logg r - \log z_0 \right).
  \]
\end{proof}

\begin{lemma}
  \label{lemma:dIpalpha}
  $\left|(\partial_\alpha z_r'')/z_r' \right|
  \leq \const \, z_0^{-2} \, (1 - \log z_0 )$.
\end{lemma}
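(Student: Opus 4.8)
The plan is to follow the telescoping scheme used in Lemmas~\ref{lemma:Ipzk}, \ref{lemma:Ippzk} and \ref{lemma:dIalpha}. First I would differentiate equation~(\ref{eq:dz2}) in $\alpha$, writing $\partial_\alpha z_r''$ as the sum of the terms obtained by differentiating $\alpha(\alpha+1)2^\alpha z_{r+1}^{\alpha-1}(z_{r+1}')^2$, plus the term obtained by differentiating the coefficient $(1+(\alpha+1)2^\alpha z_{r+1}^\alpha)$ times $z_{r+1}''$, plus $(1+(\alpha+1)2^\alpha z_{r+1}^\alpha)\,\partial_\alpha z_{r+1}''$. Dividing through by $z_r' = [1+(\alpha+1)2^\alpha z_{r+1}^\alpha]\,z_{r+1}'$ turns this into a recursion
\[
  \frac{\partial_\alpha z_r''}{z_r'} = \frac{\partial_\alpha z_{r+1}''}{z_{r+1}'} + \varepsilon_r,
\]
where $\varepsilon_r$ collects the differentiated inhomogeneous terms divided by $z_r'$ together with $\dfrac{\partial_\alpha[(\alpha+1)2^\alpha z_{r+1}^\alpha]}{1+(\alpha+1)2^\alpha z_{r+1}^\alpha}\cdot\dfrac{z_{r+1}''}{z_{r+1}'}$.

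The heart of the proof is then to show that $\varepsilon_r$ is summable. Every term of $\varepsilon_r$ carries either a factor $z_{r+1}^{\alpha-1}z_{r+1}'$ or a factor $z_{r+1}^\alpha\cdot(z_{r+1}''/z_{r+1}')$, and both are $\le\const\,(r+1)^{-2}z_0^{-2}$: the first by the bound $z_r^{\alpha-1}z_r'\le\const\,r^{-2}z_0^{-2}$ that appears inside the proof of Lemma~\ref{lemma:Ipzk}, the second by combining $z_{r+1}^\alpha\le\const/(r+1)$ (Lemma~\ref{lemma:Iansestimate}) with $|z_{r+1}''/z_{r+1}'|\le\const\,z_0^{-2}/(r+1)$ (Lemma~\ref{lemma:Ipzk}). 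The leftover factors are logarithmic in $r$ and $z_0$: $|\log z_{r+1}|$ bounded via Lemma~\ref{lemma:Iansestimate}, $\partial_\alpha z_{r+1}/z_{r+1}$ via Lemma~\ref{lemma:dzalpha}, and $\partial_\alpha z_{r+1}'/z_{r+1}'$ via Lemma~\ref{lemma:dIalpha}, the last being the worst (quadratic in logs, hence producing a term cubic in logs after multiplication). Collecting everything,
\[
  |\varepsilon_r| \le \const\,(r+1)^{-2}\,z_0^{-2}\,[\logg(r+1)]^2\,[\logg(r+1)-\log z_0].
\]

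Finally, since $z_0=z$ does not depend on $\alpha$ we have $z_0'=1$ and $\partial_\alpha z_0''=0$, so summing the recursion from $j=0$ to $r-1$ gives
\[
  \left|\frac{\partial_\alpha z_r''}{z_r'}\right|
  \le \sum_{j=0}^{r-1}|\varepsilon_j|
  \le \const\,z_0^{-2}\sum_{j\ge1}j^{-2}[\logg(j)]^2[\logg(j)-\log z_0]
  \le \const\,z_0^{-2}(1-\log z_0),
\]
the series converging because $\sum j^{-2}(\log j)^3<\infty$ and $\sum j^{-2}(\log j)^2<\infty$, while the case $r=0$ is trivial. I expect no delicate point here: the convergence is comfortable and needs no cancellation, so the only real work is the bookkeeping of writing $\partial_\alpha z_r''$ out explicitly and verifying that each of the resulting half-dozen terms indeed factors through one of the two ``gaining'' quantities $z_{r+1}^{\alpha-1}z_{r+1}'$ or $z_{r+1}^\alpha(z_{r+1}''/z_{r+1}')$.
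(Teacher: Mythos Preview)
Your proposal is correct and follows essentially the same approach as the paper: differentiate \eqref{eq:dz2} in $\alpha$, divide by $z_r'$ to obtain a telescoping recursion, bound the increment using Lemmas~\ref{lemma:Iansestimate}--\ref{lemma:dIalpha}, and sum from the initial condition $\partial_\alpha z_0''=0$. The only cosmetic difference is that the paper records the increment bound with $[\logg(r z_0^\alpha)]^2$ rather than your $[\logg(r+1)]^2$, but since $z_0\le 1$ your version is an upper bound for theirs and leads to the same conclusion.
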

\begin{proof}
  Differentiate both sides of the equation (\ref{eq:dz2}) by $\alpha$. This gives
  \begin{equation*}
    \begin{aligned}
    \partial_\alpha z_r'' = &[2 \alpha + 1 + \alpha (\alpha + 1) \log(2 z_{r+1})]
       2^\alpha z_{r+1}^{\alpha-1} (z_{r+1}')^2 \\
     & + (\alpha-1) \alpha (\alpha + 1) 2^\alpha z_{r+1}^{\alpha-2} (z_{r+1}')^2 \partial_\alpha z_{r+1}
       + 2 \alpha (\alpha + 1) 2^\alpha z_{r+1}^{\alpha-1} z_{r+1}' \partial_\alpha z_{r+1}' \\
     & + (1 + (\alpha+1) \log(2 z_{r+1}))2^\alpha z_{r+1}^\alpha z_{r+1}''
       + \alpha (\alpha+1) 2^\alpha z_{r+1}^{\alpha-1} z_{r+1}'' \partial_\alpha z_{r+1} \\
     & + (1+(\alpha+1) 2^\alpha z_{r+1}^\alpha) \partial_\alpha z_{r+1}''.
    \end{aligned}
  \end{equation*}
  Dividing the above by $z_r' = [1 + (\alpha+1) 2^\alpha z_{r+1}^\alpha] z_{r+1}'$
  and using Lemma \ref{lemma:Iansestimate} to bound $z_{r+1}$, 
  Lemma \ref{lemma:Izk} to bound $z_{r}'$, Lemma \ref{lemma:Ipzk} to bound
  $z_{r}''/z_{r}'$, Lemma \ref{lemma:dzalpha} to bound $\partial_\alpha z_r$
  and Lemma \ref{lemma:dIalpha} to bound $\partial_\alpha z_{r}' / z_r'$,
  we obtain for $r\geq 0$:
  \begin{align*}
    |z_r^{\alpha-1} z_r' \log z_r | 
    & \leq \const \, r^{-2} \, z_0^{-2} 
      \, ( \logg r - \log z_0 \, ), \\
    |z_r^{\alpha-2} z_r' \partial_\alpha z_r |
    & \leq \const \, r^{-2} \, z_0^{-2}
      \, \logg (r z_0^\alpha) \, ( \logg r- \log z_0 ), \\
    |z_r^{\alpha-1} \partial_\alpha z_r' |
    & \leq \const \, r^{-2} \, z_0^{-2}
      \, [\logg (r z_0^\alpha)]^2 \, ( \logg r - \log z_0 ), \\
    |z_r^\alpha z_r'' \log z_r / z_r' |
    & \leq \const \, r^{-2} \, z_0^{-2}
      \, ( \logg r - \log z_0 ), \\
    |z_r^{\alpha-1} z_r'' (\partial_\alpha z_r) / z_r' |
    & \leq \const \, r^{-2} \, z_0^{-2}
      \, \logg (r z_0^\alpha) \, ( \logg r - \log z_0 ).
  \end{align*}
  Hence for $r \geq 1$
  \[
    \left| \frac{\partial_\alpha z_{r}''}{z_{r}'}- \frac{\partial_\alpha z_{r-1}''}{z_{r-1}'} \right|
    \leq \const \, r^{-2} \, z_0^{-2}
      \, [\logg (r z_0^\alpha)]^2 \, ( \logg r - \log z_0 ).
  \]
  Recall that $\partial_\alpha z_0'' = 0$. Then
  \[
    \left|\frac{\partial_\alpha z_{r}''}{z_{r}'} \right|
    \leq z_0^{-2} \,\sum_{j=1}^r j^{-2}
    \, [\logg (j z_0^\alpha)]^2 \, \left[ \logg j - \log z_0 \right]
    \leq \const \, z_0^{-2} ( 1 - \log z_0 ).
  \]
\end{proof}

\subsection{Proof of Theorem \ref{th:lsv}}
  \label{subsection:proofofth:lsv}

The verification of assumptions of Theorem \ref{th:dhdalpha} is as follows.
Since $G_{\alpha,r}$ and $F_{\alpha,r}^{-1}$ are defined on $[1/2,1]$,
we use that $z = z_0 \geq 1/2$ in the bounds below. Now,
\begin{itemize}
  \item[\ref{A1}.] By equation (\ref{eq:hazzz}), $z_r'\leq 1$, thus $\|G_{\alpha,r}\|_\infty \leq 1/2$.
  \item[\ref{A2}.] By Lemma \ref{lemma:Ipzk}, $|z_r''/ z_r'| \leq \const$, thus
    $\|G_{\alpha,r}'/G_{\alpha,r}\|_\infty \leq \const$.
  \item[\ref{A3}.] By Lemma \ref{lemma:Ippzk}, $|z_r'''/ z_r' | \leq \const$, thus
    $\|G_{\alpha,r}''/G_{\alpha,r}\|_\infty \leq \const$.
  \item[\ref{A4}.] By Lemma \ref{lemma:dzalpha}, 
    $|\partial_\alpha z_r | \leq \const \, r^{-1/\alpha}\, (\logg r)^2 $, and by equation
    (\ref{eq:l93n}) we have
    \[
      \|\partial_\alpha F_{\alpha,r}^{-1}\|_\infty \leq \const \, r^{-1/\alpha}\, (\logg r)^2
      \leq \const \, (\logg r)^2.
    \]
  \item[\ref{A5}.] By Lemma \ref{lemma:dIalpha},
    $\left|(\partial_\alpha z_r')/z_r'\right| \leq \const \, (\logg r)^3$,
    thus $\|(\partial_\alpha G_{\alpha,r})/G_{\alpha,r}\|_\infty \leq \const \, (\logg r)^3$.
  \item[\ref{A6}.] By Lemma \ref{lemma:dIpalpha},
    $\left|(\partial_\alpha z_r'')/z_r'\right| \leq \const$,
    thus $\|(\partial_\alpha G_{\alpha,r}')/G_{\alpha,r}\|_\infty \leq \const$.
  \item[\ref{A7}.] By Remark \ref{remark:zzzzzz}, 
    $z_0'=1$, and by Lemma \ref{lemma:Izk}, $|z_r'| \leq \const \, r^{-(\alpha+1)/\alpha}$
    for $r\geq 1$, so
    \[
      \sum_{r=0}^\infty \| G_{\alpha,r} \|_\infty (\logg r)^3
      = \frac{1}{2} \sum_{r=0}^\infty \sup_z | z_r' | \cdot (\logg r)^3
      \leq \frac{1}{2} +  \const \, \sum_{r=1}^\infty \frac{(\logg r)^3}{r^{1+1/\alpha}} \leq \const.
    \]
\end{itemize}

To verify the assumptions of the Corollary \ref{cor:linresp} --- we have to show in addition that
\begin{itemize}
  \item $\sum_{r=0}^\infty (r+1) (\logg r)^3 \, \left\|G_{\alpha,r}\right\|_\infty \leq \const$.
    By Lemma \ref{lemma:Izk} and equation (\ref{eq:l93m}), 
    \[|G_{\alpha,r}(z)| = |z_r'|/2 \leq \const \, r^{-(\alpha+1)/\alpha},\]
    thus
    \[
      \sum_{r=0}^\infty (r+1) (\logg r)^3 \, \left\| G_{\alpha,r} \right\|_\infty 
      \leq \const \sum_{r=0}^\infty \frac{(\logg r)^3}{r^{1/\alpha}} \, \frac{r+1}{r}
      \leq \const.
    \]
  \item $\|\partial_\alpha [\iphi \circ F_{\alpha,r}^{-1}] \|_\infty 
    \leq  \const \, \|\varphi\|_{C^1} \, (r+1)$ and 
    $\|\iphi \circ F_{\alpha,r}^{-1}\|_\infty \leq  \const \, \|\varphi\|_{C^1} \, (r+1)$. 
    This is true because
      \[
        \left(\iphi \circ F_{\alpha,r}^{-1}\right)(z) 
        = \varphi \left(\frac{z_r+1}{2}\right) + \sum_{j=0}^{r-1} \varphi(T_\alpha^j z_r)
        = \varphi \left(\frac{z_r+1}{2}\right) + \sum_{j=1}^{r} \varphi(z_j),
      \]
      and $|\partial_\alpha z_r| \leq \const $ by Lemma \ref{lemma:dzalpha}.
\end{itemize}

Hence we have verified assumptions of Theorem \ref{th:dhdalpha}
and Corollary \ref{cor:linresp} as required.

\subsection{Proof of Theorem \ref{th:whatamIdoing}}
\label{subsection:proofofth:whatamIdoing}
  
  Recall that the invariant measure of $T_\alpha$ is denoted by $\nu_\alpha$,
  and its density by $\rho_\alpha$, while the invariant measure of the induced
  map $F_{\alpha}$ is denoted by $\mu_\alpha$, and its density by $h_\alpha$.
 
  \begin{lemma}
    \label{lemma:pmdenc}
    $\rho_\alpha(z) = g_\alpha(z) \big/ \int_0^1 g_\alpha(x)\, dx$ for all $z \in (0,1]$, where
    \[
      g_\alpha(z) = \frac{1}{2} \sum_{k=0}^\infty h_\alpha \left(\frac{z_k+1}{2}\right) \, z_k'.
    \]
  \end{lemma}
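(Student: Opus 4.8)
The plan is to recognise $g_\alpha$ as a constant multiple of the density produced from $h_\alpha$ by the standard tower (Kakutani skyscraper) construction, and then to invoke uniqueness of the absolutely continuous invariant measure for $T_\alpha$. Writing $Y=[1/2,1]$, keeping the notation $F_\alpha$, $\tau_\alpha$, $\mu_\alpha$ for the induced map, its return time and its invariant probability (so $d\mu_\alpha = h_\alpha\,dm = 2h_\alpha\,d\mathrm{Leb}$ on $Y$, since $m$ is \emph{normalized} Lebesgue on $Y$), one first sets
\[
  \hat\nu \;=\; \sum_{j=0}^{\infty}(T_\alpha^{\,j})_*\bigl(\mu_\alpha|_{\{\tau_\alpha>j\}}\bigr) .
\]
By the usual argument $\hat\nu$ is a $T_\alpha$-invariant, absolutely continuous measure on $[0,1]$ with total mass $\hat\nu([0,1])=\sum_{j\ge0}\mu_\alpha(\tau_\alpha>j)=\int_Y\tau_\alpha\,d\mu_\alpha$, and this is finite because $\int\tau_\alpha\,d\mu_\alpha\le \|h_\alpha\|_\infty\sum_r(r+1)\|G_{\alpha,r}\|_\infty\le\const$, the summability of $\sum_r(r+1)\|G_{\alpha,r}\|_\infty$ being exactly what was established in the proof of Theorem~\ref{th:lsv}. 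By uniqueness of the a.c.i.p.\ for $T_\alpha$ (\cite{LSV99}) we then have $\nu_\alpha=\hat\nu/\hat\nu([0,1])$.

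The second step is to compute $d\hat\nu/d\mathrm{Leb}$ pointwise by enumerating, for fixed $z\in(0,1]$, the pairs $(y,j)$ with $y\in Y$, $0\le j<\tau_\alpha(y)$ and $T_\alpha^{\,j}y=z$. If $z\in(1/2,1]$ the only such pair is $(z,0)$, since $j\ge1$ would force $T_\alpha^{\,j}y=z\in Y$, contradicting $j<\tau_\alpha(y)$. If $z\in(0,1/2)$, then $j=0$ contributes nothing and, using that the only branch of $T_\alpha$ on $Y$ is $y\mapsto 2y-1$ and that $z_k=E_\alpha^{-k}(z)<1/2$ for $k\ge1$, one checks that the pairs are exactly $(w_k,k+1)$, $k\ge0$, with $w_k=\tfrac{z_k+1}{2}$ (this is $F_{\alpha,k}^{-1}(z)$ when $z\in[1/2,1]$, and the same formula extended otherwise). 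In both cases the chain rule gives $|(T_\alpha^{\,k+1})'(w_k)|^{-1}=|dw_k/dz|=\tfrac12 z_k'$, so with $d\mu_\alpha/d\mathrm{Leb}=2h_\alpha$ on $Y$ we get $d\hat\nu/d\mathrm{Leb}(z)=\sum_{k\ge0}h_\alpha(w_k)z_k'=2g_\alpha(z)$ for $z\in(0,1/2)$ and $d\hat\nu/d\mathrm{Leb}(z)=2h_\alpha(z)$ for $z\in(1/2,1]$. The two formulas agree on $Y$ because there $g_\alpha(z)=\sum_k G_{\alpha,k}(z)\,h_\alpha(F_{\alpha,k}^{-1}(z))=(P_\alpha h_\alpha)(z)=h_\alpha(z)$, using $G_{\alpha,k}=z_k'/2$, $F_{\alpha,k}^{-1}=w_k$ and $P_\alpha h_\alpha=h_\alpha$. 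Hence $d\hat\nu/d\mathrm{Leb}=2g_\alpha$ on all of $(0,1]$, and dividing by $\hat\nu([0,1])=\int_0^1 2g_\alpha\,d\mathrm{Leb}$ yields $\rho_\alpha=g_\alpha\big/\int_0^1 g_\alpha$. Along the way one records that the series defining $g_\alpha$ converges for each fixed $z>0$ (dominated by $\|h_\alpha\|_\infty(1+\const\sum_{k\ge1}k^{-(\alpha+1)/\alpha}z^{-(\alpha+1)})$ via Lemma~\ref{lemma:Izk}) and that $g_\alpha$ is continuous and strictly positive on $(0,1]$, so the identity, a priori only $m$-almost everywhere, holds everywhere for the continuous representative that Theorem~\ref{th:whatamIdoing} refers to.

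The only delicate point is bookkeeping: correctly enumerating the preimage pairs $(y,j)$ contributing to $d\hat\nu/d\mathrm{Leb}(z)$ — in particular that the $j=0$ term is present only on $Y$ while on $(0,1/2)$ every $k\ge0$ contributes — computing the Jacobians, and tracking the factor $2$ coming from the normalization of $m$ on $[1/2,1]$; there is no analytic difficulty. A leaner alternative that avoids the tower altogether is to verify directly that $g_\alpha$ (defined by its series on all of $(0,1]$) is a fixed point of the Lebesgue transfer operator $\mathcal{L}_\alpha\phi(z)=\tfrac12\phi\bigl(\tfrac{z+1}{2}\bigr)+z_1'\,\phi(z_1)$ of $T_\alpha$: once one notes $g_\alpha\bigl(\tfrac{z+1}{2}\bigr)=h_\alpha\bigl(\tfrac{z+1}{2}\bigr)$ (the case $z\in[1/2,1]$ above) and $g_\alpha(z_1)=(z_1')^{-1}\tfrac12\sum_{k\ge1}h_\alpha(w_k)z_k'$ (reindexing the series using $E_\alpha^{-k}(z_1)=z_{k+1}$), the identity $\mathcal{L}_\alpha g_\alpha=g_\alpha$ is a one-line computation, and then uniqueness of the a.c.i.p.\ together with $\int_0^1 g_\alpha<\infty$ again gives $\rho_\alpha=g_\alpha\big/\int_0^1 g_\alpha$.
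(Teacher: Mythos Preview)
Your argument is correct. It is the same underlying idea as the paper's proof --- the Kakutani/Kac relation between $\mu_\alpha$ and $\nu_\alpha$ --- but packaged differently. The paper works in the weak (dual) formulation: it fixes a test function $\varphi$, expands $\int \Phi_\alpha\,d\mu_\alpha$ as a double sum over branches and levels, and by two explicit substitutions ($x=2y-1$, then $z=T_\alpha^k x$) arrives at $\int_0^1 \varphi(z)\,g_\alpha(z)\,dz$; Kac's formula then gives $\rho_\alpha=g_\alpha/\int g_\alpha$. You work in the strong (pointwise) formulation: you write down the tower measure $\hat\nu=\sum_j (T_\alpha^j)_*(\mu_\alpha|_{\{\tau_\alpha>j\}})$, invoke its $T_\alpha$-invariance and uniqueness of the a.c.i.p., and compute $d\hat\nu/d\mathrm{Leb}$ by enumerating preimage pairs. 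The bookkeeping (the factor $2$ from the normalized $m$ on $[1/2,1]$, the identification $g_\alpha=P_\alpha h_\alpha=h_\alpha$ on $Y$, and the enumeration of $(y,j)$) is handled correctly. Your alternative route --- checking directly that $\mathcal{L}_\alpha g_\alpha=g_\alpha$ by reindexing the series --- is a genuinely leaner argument than both, since it bypasses the tower/Kac machinery entirely and reduces the lemma to a single telescoping identity plus uniqueness; that would be the cleanest proof of the three.
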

  \begin{proof}
    Let $\varphi$ be a nonnegative observable on $[0,1]$, and 
    $\Phi_\alpha = \sum_{k=0}^{\tau_\alpha -1} \varphi \circ T_\alpha^k$ 
    be the corresponding induced observable.
    In the beginning of Section~\ref{section:LSVintro} we partitioned the interval
    \([1/2,1]\) into intervals \([y_{r+1}, y_r]\), \(r \geq 0\),
    where \(F_\alpha\) has full branches and \(\tau_\alpha = r+1\).
    Compute
    \begin{align*}
      \int \Phi_\alpha \, d\mu_\alpha 
      & = \int_{1/2}^1 \sum_{k=0}^{\tau_\alpha(y)-1}
        \varphi(T_\alpha^k y ) \, h_\alpha(y) \, dy 
        = \sum_{j=0}^{\infty} \int_{y_{j+1}}^{y_j} \sum_{k=0}^{j} 
          \varphi(T_\alpha^k y) \, h_\alpha(y) \, dy  \\
      & = \sum_{k=0}^\infty \sum_{j=k}^\infty \int_{y_{j+1}}^{y_j}
           \varphi(T_\alpha^k y) \, h_\alpha(y) \, dy  
        = \sum_{k=0}^\infty \int_{1/2}^{y_k} 
           \varphi(T_\alpha^k y) \, h_\alpha(y) \, dy  \\
      & = \int_{1/2}^{1} \varphi(y) \, h_\alpha(y) \, dy 
        + \frac{1}{2} \sum_{k=1}^{\infty} \int_{0}^{x_k} 
           \varphi(T_\alpha^{k-1} x) \, h_\alpha\left(\frac{x+1}{2}\right) \, dx \\
      & = \int_{1/2}^{1} \varphi(y) \, h_\alpha(y) \, dy 
        + \frac{1}{2} \sum_{k=0}^{\infty} \int_{0}^{x_{k+1}} 
           \varphi(T_\alpha^{k} x) \, h_\alpha\left(\frac{x+1}{2}\right) \, dx \\
      & =  \int_{1/2}^{1} \varphi(y) \, h_\alpha(y) \, dy 
        + \frac{1}{2} \sum_{k=0}^{\infty}
           \int_{0}^{1/2} \varphi(z) \, h_\alpha \left(\frac{z_k+1}{2}\right) \, z_k' \, dz \\
      & = \frac{1}{2} \sum_{k=0}^{\infty}
           \int_{0}^{1} \varphi(z) \, h_\alpha \left(\frac{z_k+1}{2}\right) \, z_k' \, dz.
    \end{align*}
    First we made a substitution  $x= T_\alpha y = 2y-1$, and then a
    substitution $z = T_\alpha^{k}x$, i.e.\ $x = z_k$. In the last step we used the fact that
    for $z \geq 1/2$
    \[
      h_\alpha(z) = (P_\alpha h_\alpha)(z) 
      = \sum_{k=0}^\infty h_\alpha \left(\frac{z_k+1}{2}\right) \frac{z_k'}{2}.
    \]
    Since $\int \varphi \, d\nu_\alpha = \int \Phi_\alpha \, d\mu_\alpha \big/ 
    \int \tau_\alpha \, d\mu_\alpha $, the result follows.
  \end{proof}

  \begin{lemma}
    \label{lemma:pmdens}
    $g_\alpha(z)$ and $\partial_\alpha g_\alpha(z)$ are jointly continuous
    in $\alpha, z$ on \([\alpha_-, \alpha_+] \times (0, 1]\). Also,
    $0 \leq g_\alpha (z) \leq \const \, z^{-\alpha}$
    and
    $\left|\partial_\alpha g_\alpha(z) \right|\leq \const \, z^{-\alpha} (1-\log z)^3$.
  \end{lemma}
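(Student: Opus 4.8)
The plan is to expand $g_\alpha(z)=\sum_{k\ge0}w_k(\alpha,z)$, where $w_k(\alpha,z)=\tfrac12\,h_\alpha\!\bigl(\tfrac{z_k+1}{2}\bigr)z_k'$, and to control both this series and its termwise $\alpha$-derivative using the bounds $\|h_\alpha\|_{C^2}\le K_1$ and $\|\partial_\alpha h_\alpha\|_{C^1}\le K_2$ from Theorem~\ref{th:dhdalpha} (and its remarks), together with the estimates on $z_k$, $z_k'$ and their $\alpha$-derivatives from Lemmas~\ref{lemma:Izk}, \ref{lemma:dzalpha} and \ref{lemma:dIalpha}.

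I would first deal with continuity. By the smoothness remarks at the start of Subsection~\ref{section:technicallemmas}, each of $z_k$, $z_k'$, $\partial_\alpha z_k$, $\partial_\alpha z_k'$ is jointly continuous in $(\alpha,z)$ on $[\alpha_-,\alpha_+]\times(0,1]$; combined with the joint continuity of $h_\alpha$, $h_\alpha'$ and $\partial_\alpha h_\alpha$ from Theorem~\ref{th:dhdalpha}, this makes every $w_k$ and every $\partial_\alpha w_k$ jointly continuous there. The quantitative bounds below show that $\sum_k|w_k|$ and $\sum_k|\partial_\alpha w_k|$ are bounded uniformly on each set $[\alpha_-,\alpha_+]\times[z_-,1]$ with $z_->0$, so both series converge locally uniformly. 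Hence $g_\alpha$ is jointly continuous, differentiation under the summation sign is legitimate, and $\partial_\alpha g_\alpha=\sum_k\partial_\alpha w_k$ is jointly continuous.

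Next come the two pointwise bounds. Nonnegativity of $g_\alpha$ is immediate since $h_\alpha>0$ and $z_k'\ge0$. For the upper bound, $g_\alpha(z)\le\tfrac{K_1}{2}\sum_{k\ge0}z_k'$, and using $z_0'=1$ together with $z_k'\le\const\,(1+kz^\alpha\alpha2^\alpha)^{-(\alpha+1)/\alpha}$ from Lemma~\ref{lemma:Izk} (the summand being decreasing in $k$, compare with $\int_0^\infty(1+tz^\alpha\alpha2^\alpha)^{-(\alpha+1)/\alpha}\,dt=\const\,z^{-\alpha}$) gives $\sum_{k\ge0}z_k'\le\const\,z^{-\alpha}$, hence $g_\alpha(z)\le\const\,z^{-\alpha}$. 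For $\partial_\alpha g_\alpha$, expand by the product and chain rules:
\[
  \partial_\alpha w_k
  = \tfrac12\,(\partial_\alpha h_\alpha)\!\bigl(\tfrac{z_k+1}{2}\bigr)z_k'
  + \tfrac14\,h_\alpha'\!\bigl(\tfrac{z_k+1}{2}\bigr)(\partial_\alpha z_k)\,z_k'
  + \tfrac12\,h_\alpha\!\bigl(\tfrac{z_k+1}{2}\bigr)\partial_\alpha z_k'.
\]
Bounding the first term by $K_2\,z_k'$, the second via Lemma~\ref{lemma:dzalpha}, and the third via Lemma~\ref{lemma:dIalpha} (which gives $|\partial_\alpha z_k'|\le\const\,z_k'\,[\logg(kz^\alpha)]^2(\logg k-\log z)$), each term is $\le\const\,z_k'\bigl(1+[\logg(kz^\alpha)]^2(\logg k-\log z)\bigr)$, so
\[
  |\partial_\alpha g_\alpha(z)|
  \le\const\sum_{k\ge0}z_k'\Bigl(1+[\logg(kz^\alpha)]^2(\logg k-\log z)\Bigr).
\]
The ``$1$'' contributes $\const\,z^{-\alpha}$ as above; for the remainder I would split the sum at $k_0\asymp z^{-\alpha}$. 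For $k\le k_0$ one has $kz^\alpha\le\const$, so $\logg(kz^\alpha)=1$, $z_k'\le1$ and $\logg k-\log z\le\const(1-\log z)$, contributing $\le\const\,z^{-\alpha}(1-\log z)$. For $k>k_0$ one uses $\logg(kz^\alpha)\le\logg k$ (valid since $z\le1$) and $z_k'\le\const\,k^{-(\alpha+1)/\alpha}z^{-(\alpha+1)}$ from Lemma~\ref{lemma:Izk}; a standard integral comparison, with $k_0\asymp z^{-\alpha}$ and $\logg k_0\le\const(1-\log z)$, bounds this tail by $\const\,z^{-\alpha}(1-\log z)^3$. Adding up yields $|\partial_\alpha g_\alpha(z)|\le\const\,z^{-\alpha}(1-\log z)^3$.

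The hard part will be that last sum estimate, where the $\logg$-factors from Lemmas~\ref{lemma:dzalpha} and \ref{lemma:dIalpha} have to be weighed against the decay of $z_k'$ so that precisely three powers of $1-\log z$ survive. The \emph{decisive} point is that for $k\lesssim z^{-\alpha}$ the argument $kz^\alpha$ is bounded, so $\logg(kz^\alpha)=1$ and no logarithm is lost over that (numerically dominant) range of $k$; the full power $(1-\log z)^3$ emerges only from the tail $k\gtrsim z^{-\alpha}$ via the integral comparison. Replacing $\logg(kz^\alpha)$ by $\logg k$ throughout would cost a spurious extra factor of $1-\log z$. By contrast, the continuity statement is routine once the uniform bounds are available.
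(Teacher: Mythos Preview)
Your argument is correct and follows essentially the same route as the paper: same termwise decomposition, same inputs from Theorem~\ref{th:dhdalpha} and Lemmas~\ref{lemma:Izk}, \ref{lemma:dzalpha}, \ref{lemma:dIalpha}, and the same uniform-convergence-on-compacts mechanism for joint continuity. The only real difference is in the final summation: where you split at $k_0\asymp z^{-\alpha}$ and use $\logg(kz^\alpha)\le\logg k$ on the tail (yielding the stated bound $\const\,z^{-\alpha}(1-\log z)^3$), the paper instead passes to the integral and substitutes $s=tz^\alpha$, which keeps the factor $[\logg s]^2$ bounded on the whole range and actually produces the sharper estimate $|\partial_\alpha g_\alpha(z)|\le\const\,z^{-\alpha}(1-\log z)$.
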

  \begin{proof}
    By Theorem \ref{th:dhdalpha}, $\|h_\alpha\|_{C^2} \leq \const$ and 
    $\|\partial_\alpha h_\alpha\|_{C^1} \leq \const$, and both 
    $h_\alpha(z)$ and $\partial_\alpha h_\alpha(z)$ are jointly continuous
    in $\alpha$ and $z$.
    By Lemma \ref{lemma:Izk}, 
    $
      0 \leq z_r' \leq \const \, 
      \left(1+r z^\alpha \alpha 2^\alpha\right)^{-(\alpha+1)/\alpha}
    $, hence
    \begin{equation}
      \label{eq:l54n8s}
      0 \leq\sum_{r=1}^{\infty} z_r' \leq \const \int_{1}^{\infty}
      \left(1+t z^\alpha \alpha 2^\alpha\right)^{-(\alpha+1)/\alpha} \, dt
      \leq \const z^{-\alpha}.
    \end{equation}
    Now,
    \[
      0 \leq g_\alpha(z) =\frac{1}{2} \sum_{k=0}^{\infty} h_\alpha \left(\frac{z_k+1}{2}\right) \, z_k' 
      \leq \const \, z^{-\alpha}.
    \]
    Terms of the series are jointly continuous in $\alpha$ and $z$, and convergence is uniform
    away from $z=0$, thus $g_\alpha(z)$ is also jointly continuous in $\alpha$ and $z$.
    
    Denote $u_{\alpha,k}(z) = h_\alpha \left((z_k+1)/2\right) \, z_k' / 2$,
    so that $g_\alpha(z) = \sum_{k=0}^\infty u_{\alpha,k}(z)$ and compute
    \[
      \partial_\alpha u_{\alpha,k}(z)
      = \left[(\partial_\alpha h_\alpha) \left(\frac{z_k+1}{2}\right)
          + h_\alpha \left(\frac{z_k+1}{2}\right)
        \frac{\partial_\alpha z_k}{2}
        \right] \frac{z_k'}{2}
        + h_\alpha \left(\frac{z_k+1}{2}\right) \frac{\partial_\alpha z_k'}{2}.
    \]
    By Lemma \ref{lemma:dzalpha}, 
    \[
      0 \leq \partial_\alpha z_r \leq \const \,
      r^{-1/\alpha} \logg (r z^\alpha) \, \left[\logg r - \log z \right].
    \]
    By Lemma \ref{lemma:dIalpha},
    \[
      |\partial_\alpha z_r'| \leq \const \, z_r' \,
      [\logg (r z^\alpha)]^2 \, \left[\logg r -\log z \right].
    \]
    Thus $|\partial_\alpha u_{\alpha,k}(z)| \leq \const \, z_r' \, 
    [\logg (r z^\alpha)]^2 \, \left[\logg r -\log z \right]$.
    Thus by Lemma \ref{lemma:Izk},
    \begin{align*}
      \sum_{k=0}^{\infty} |\partial_\alpha u_{\alpha,k}(z)|
      &\leq \const \,\sum_{k=0}^{\infty} z_k' \, 
        [\logg (k z^\alpha)]^2 \,\left[\logg k -\log z \right] \\
      &\leq \const \, \int_{1}^{\infty} 
        \left(1+t z^\alpha \alpha 2^\alpha\right)^{-(\alpha+1)/\alpha}\,
        [\log (t z^\alpha)]^2 \,\left[\log t -\log z \right] \, dt \\
      & = \const \, z^{-\alpha} \, \int_{z^\alpha}^{\infty} 
        \left(1 + s \alpha 2^\alpha\right)^{-(\alpha+1)/\alpha}\,
        (\log s)^2 \,\left[\log \frac{s}{z^{\alpha}} -\log z \right] \, ds \\
      & \leq \const \, z^{-\alpha} \, (1-\log z).
    \end{align*}
    Therefore we can write
    \[
      (\partial_\alpha g_\alpha)(z) 
      = \sum_{k=0}^{\infty} \partial_\alpha u_{\alpha,k}(z).
    \]
    Away from $z=0$, the terms of the series are jointly continuous in 
    $\alpha$ and $z$, and series converges uniformly, so
    $(\partial_\alpha g_\alpha)(z)$ is jointly continuous in $\alpha$ and $z$,
    and
    $
      \left|\partial_\alpha g_\alpha(z) \right|
      \leq \const \, z^{-\alpha} (1-\log z).
    $
  \end{proof}
  
  \begin{corollary}
    \label{lemma:pmden0}
    $\rho_\alpha(z)$ and $\partial_\alpha \rho_\alpha(z)$ are jointly continuous
    in $\alpha$ and $z$. Also,
    $0 \leq g_\alpha (z) \leq \const \, z^{-\alpha}$
    and
    $\left|\partial_\alpha g_\alpha(z) \right|\leq \const \, z^{-\alpha} (1-\log z)$.    
  \end{corollary}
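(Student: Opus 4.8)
The plan is to read off the corollary from Lemmas~\ref{lemma:pmdenc} and~\ref{lemma:pmdens} together with facts already established for $T_\alpha$, the only real content being the normalization. First I would set $Z_\alpha = \int_0^1 g_\alpha(x)\,dx$, so that $\rho_\alpha = g_\alpha/Z_\alpha$ by Lemma~\ref{lemma:pmdenc}. Taking $\varphi\equiv 1$ in the computation inside the proof of Lemma~\ref{lemma:pmdenc} identifies $Z_\alpha = \int\tau_\alpha\,d\mu_\alpha$; by Theorem~\ref{th:lsv} and Corollary~\ref{cor:linresp} this is continuously differentiable in $\alpha$ on $[\alpha_-,\alpha_+]$, and since $\tau_\alpha\geq 1$ and $\mu_\alpha$ is a probability measure it satisfies $Z_\alpha\geq 1$. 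Thus $Z_\alpha$ and $Z_\alpha'$ are bounded on $[\alpha_-,\alpha_+]$ and $Z_\alpha$ is bounded away from $0$. (Alternatively, I would get the same by integrating the bounds of Lemma~\ref{lemma:pmdens} over $x\in(0,1]$ and differentiating under the integral sign, which is legitimate because $|\partial_\alpha g_\alpha(x)|\leq\const\,x^{-\alpha_+}(1-\log x)$ is integrable on $(0,1]$ uniformly in $\alpha$; positivity of $Z_\alpha$ follows from $g_\alpha(z)\geq\tfrac12 h_\alpha((z+1)/2)>0$.)

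Next I would observe that $\rho_\alpha(z)=g_\alpha(z)/Z_\alpha$ is jointly continuous in $(\alpha,z)$ on $[\alpha_-,\alpha_+]\times(0,1]$, being the quotient of the jointly continuous function $g_\alpha(z)$ of Lemma~\ref{lemma:pmdens} by a continuous nonvanishing function of $\alpha$, and that
\[
  \partial_\alpha\rho_\alpha(z)=\frac{\partial_\alpha g_\alpha(z)}{Z_\alpha}-\frac{g_\alpha(z)\,Z_\alpha'}{Z_\alpha^{2}}
\]
is jointly continuous for the same reason. The pointwise bounds are then immediate from $Z_\alpha\geq 1$, $|Z_\alpha'|\leq\const$ and Lemma~\ref{lemma:pmdens}: $0\leq\rho_\alpha(z)\leq\const\,z^{-\alpha}$ and $|\partial_\alpha\rho_\alpha(z)|\leq\const\,z^{-\alpha}(1-\log z)$, uniformly in $\alpha\in[\alpha_-,\alpha_+]$, which is the constant $K$ of Theorem~\ref{th:whatamIdoing}.

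Finally, to complete Theorem~\ref{th:whatamIdoing} I would fix $q>(1-\alpha_+)^{-1}$ with conjugate exponent $q'$, note that $q'<\alpha_+^{-1}$ so that $x\mapsto x^{-\alpha_+}(1-\log x)\in L^{q'}(0,1]$, and conclude $\rho_\alpha,\partial_\alpha\rho_\alpha\in L^{q'}$ with norms bounded uniformly in $\alpha$. For $\varphi\in L^q[0,1]$, writing $\int\varphi\,d\nu_\alpha=\int_0^1\varphi(x)\rho_\alpha(x)\,dx$ and applying the mean value theorem in $\alpha$ pointwise in $x$, I would dominate $|(\rho_{\alpha+\varepsilon}(x)-\rho_\alpha(x))/\varepsilon|\,|\varphi(x)|$ by the fixed $L^1$ function $\const\,x^{-\alpha_+}(1-\log x)\,|\varphi(x)|$ (integrable by H\"older), so dominated convergence yields $\frac{d}{d\alpha}\int\varphi\,d\nu_\alpha=\int_0^1\varphi(x)\,\partial_\alpha\rho_\alpha(x)\,dx$; joint continuity of $\partial_\alpha\rho_\alpha$ and the same domination show this is continuous in $\alpha$. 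The argument is otherwise routine; the only point needing care, and the one already handled by Lemma~\ref{lemma:pmdens}, is the uniform-in-$\alpha$ integrable control of $g_\alpha$ and $\partial_\alpha g_\alpha$ near $z=0$, which is exactly what makes dividing by $Z_\alpha$ and passing to $L^q$ observables harmless.
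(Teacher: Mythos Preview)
Your argument is correct and follows essentially the same route as the paper: identify $Z_\alpha=\int_0^1 g_\alpha\,dx=\int\tau_\alpha\,d\mu_\alpha\geq 1$, show it is continuously differentiable (the paper does this by differentiating under the integral using the bounds of Lemma~\ref{lemma:pmdens}, which is your ``alternative'' route), and then read off joint continuity and the bounds for $\rho_\alpha=g_\alpha/Z_\alpha$ and $\partial_\alpha\rho_\alpha$ via the quotient rule. Your final paragraph also reproduces the paper's subsequent corollary on $L^q$ observables, by the same H\"older/dominated convergence argument.
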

  \begin{proof}
    Note that $\int_{0}^{1} \, g_\alpha (z) \, dz = \int \tau_\alpha \, d\mu_\alpha \geq 1$, 
    and
    \[
      \frac{d}{d\alpha} \int _{0}^{1} \, g_\alpha (z) \, dz 
      = \int _{0}^{1} \, (\partial_\alpha g_\alpha) (z) \, dz.
    \]
    By Lemma \ref{lemma:pmdens}, $\int_{0}^{1} \, g_\alpha (z) \, dz$ is 
    continuously differentiable in $\alpha$, its derivative is bounded by
    $\const$. The result follows from Lemma \ref{lemma:pmdens} and relation,
    established in Lemma \ref{lemma:pmdenc}:
    \[
      \rho_\alpha(z) = g_\alpha(z) \Big/ \int_0^1 g_\alpha(x) \, dx.
    \]
  \end{proof}
  
  \begin{corollary}
    Assume that $\varphi \in L^q[0,1]$, where $q > (1-\alpha_+)^{-1}$. Then the map
    $\alpha \mapsto \int \varphi(x) \rho_\alpha(x) \, dx$ is continuously
    differentiable on $[\alpha_-, \alpha_+]$.
  \end{corollary}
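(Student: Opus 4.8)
The plan is to combine the pointwise bounds on $\rho_\alpha$ and $\partial_\alpha \rho_\alpha$ from Corollary~\ref{lemma:pmden0} with H\"older's inequality and a dominated-convergence / differentiation-under-the-integral argument. First I would record the two facts I need: by Corollary~\ref{lemma:pmden0}, for all $z \in (0,1]$ and $\alpha \in [\alpha_-,\alpha_+]$ we have $\rho_\alpha(z) \leq \const\, z^{-\alpha} \leq \const\, z^{-\alpha_+}$ and $|\partial_\alpha \rho_\alpha(z)| \leq \const\, z^{-\alpha_+}(1-\log z)$, and both $\rho_\alpha(z)$ and $\partial_\alpha\rho_\alpha(z)$ are jointly continuous on $[\alpha_-,\alpha_+]\times(0,1]$; the constants depend only on $\alpha_-,\alpha_+$.

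Next I would check integrability. Let $p = q/(q-1)$ be the conjugate exponent of $q$; since $q > (1-\alpha_+)^{-1}$ we have $p < \alpha_+^{-1}$, i.e.\ $p\,\alpha_+ < 1$, so $z \mapsto z^{-\alpha_+} \in L^p[0,1]$, and similarly $z\mapsto z^{-\alpha_+}(1-\log z) \in L^p[0,1]$ because the logarithmic factor is harmless against a strictly subintegrable power. Hence by H\"older's inequality $\varphi \rho_\alpha \in L^1[0,1]$ and $\varphi\,\partial_\alpha\rho_\alpha \in L^1[0,1]$ for every $\varphi \in L^q[0,1]$, with
\[
  \left| \int_0^1 \varphi(x)\rho_\alpha(x)\,dx \right| \leq \const\,\|\varphi\|_{L^q}
  \qquad\text{and}\qquad
  \left| \int_0^1 \varphi(x)\,\partial_\alpha\rho_\alpha(x)\,dx \right| \leq \const\,\|\varphi\|_{L^q},
\]
uniformly in $\alpha \in [\alpha_-,\alpha_+]$.

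Then I would differentiate under the integral sign. For fixed $\alpha$ and $\beta \to \alpha$, write
\[
  \frac{1}{\beta-\alpha}\int_0^1 \varphi(x)\bigl(\rho_\beta(x) - \rho_\alpha(x)\bigr)\,dx
  = \int_0^1 \varphi(x)\,\frac{\rho_\beta(x)-\rho_\alpha(x)}{\beta-\alpha}\,dx,
\]
and by the mean value theorem the integrand equals $\varphi(x)\,\partial_\alpha\rho_{\alpha^*(x)}(x)$ for some $\alpha^*(x)$ between $\alpha$ and $\beta$, so it is dominated in absolute value by $|\varphi(x)|\cdot\const\, x^{-\alpha_+}(1-\log x)$, an $L^1$ function independent of $\beta$. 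Since $\partial_\alpha\rho_\alpha(x)$ is continuous in $\alpha$ for each $x$, the integrand converges pointwise to $\varphi(x)\,\partial_\alpha\rho_\alpha(x)$, and dominated convergence gives
\[
  \frac{d}{d\alpha}\int_0^1 \varphi(x)\rho_\alpha(x)\,dx = \int_0^1 \varphi(x)\,\partial_\alpha\rho_\alpha(x)\,dx.
\]
Finally, continuity of the derivative in $\alpha$ follows from the same domination plus joint continuity of $\partial_\alpha\rho_\alpha(x)$ in $(\alpha,x)$: if $\alpha_n \to \alpha$ then $\varphi(x)\,\partial_\alpha\rho_{\alpha_n}(x) \to \varphi(x)\,\partial_\alpha\rho_\alpha(x)$ pointwise under a fixed $L^1$ dominating function, so the integrals converge. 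The only mild subtlety — the ``hard part,'' such as it is — is making sure the logarithmic factor does not spoil $L^p$ membership, which is immediate since $p\alpha_+<1$ strictly leaves room to absorb any $(1-\log x)$; everything else is routine H\"older and dominated convergence on the back of Corollary~\ref{lemma:pmden0}.
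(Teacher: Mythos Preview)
Your argument is correct and follows essentially the same route as the paper: use the pointwise bounds from Corollary~\ref{lemma:pmden0}, pass to the conjugate exponent $p = q/(q-1) < 1/\alpha_+$ so that $\partial_\alpha\rho_\alpha$ is uniformly in $L^p$, apply H\"older, and then use dominated convergence for both the differentiation under the integral and the continuity of the derivative. If anything, your version is more explicit than the paper's, which simply asserts the interchange of derivative and integral; your mean-value-theorem justification fills that in cleanly.
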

  \begin{proof}
    Let $p = 1/(1-1/q)$. Then $p < 1/\alpha_+$ and by Corollary \ref{lemma:pmden0},
    $\|\partial_\alpha \rho_\alpha\|_{L^p}$
    is bounded uniformly in $\alpha$.
    Since $\rho_\alpha(x)$ and $(\partial_\alpha \rho_\alpha) (x)$ are jointly
    continuous in $\alpha$ and $x$, we can write
    \[
      \left|\frac{d}{d\alpha} \int \varphi(x) \rho_\alpha(x) \, dx \right|
      = \left|\int_0^1 \varphi(x) \, (\partial_\alpha \rho_\alpha)(x) \, dx \right|
      \leq \|\varphi\|_{L^q} \,
      \|\partial_\alpha \rho_\alpha\|_{L^p}.
    \]
    It is clear that the above is bounded on $[\alpha_-, \alpha_+]$. Continuity
    of $\int_0^1 \varphi(x) \, (\partial_\alpha \rho_\alpha) (x) \, dx$ follows
    from continuity of $(\partial_\alpha \rho_\alpha) (x)$ in $\alpha$ and the dominated
    convergence theorem.
  \end{proof}

\paragraph{Acknowledgements.}
This research was supported in part by a European Advanced Grant {\em StochExtHomog} (ERC AdG 320977).
The author is grateful to Zemer Kosloff and Ian Melbourne for many hours of discussions, 
and numerous suggestions and corrections on the manuscript.

\end{document}